\documentclass[a4paper,12pt]{amsart}
\usepackage{amsfonts}
\usepackage{amsthm}
\usepackage{amssymb}
\usepackage{amsmath}
\usepackage{enumerate}
\usepackage{url}
\usepackage{epsfig}
\usepackage{subfigure}

\usepackage{a4wide}

\parindent0mm
\allowdisplaybreaks

\begin{document}
\title{on the order of magnitude of Sudler products}
\author{Christoph Aistleitner}
\address{Institute of Analysis and Number Theory, 
TU Graz, Steyrergasse 30, 
8010 Graz, Austria; 
E-mail:aistleitner@math.tugraz.at}
\author{Niclas Technau}
\address{School of Mathematical Sciences, 
Tel Aviv University, Tel Aviv, Israel;
E-mail address: niclast@mail.tau.ac.il}
\author{Agamemnon Zafeiropoulos}
\address{Department of Mathematical Sciences, 
Norwegian University of Science and Technology,
NO-7491 Trondheim, Norway; 
Email address: agamemnon.zafeiropoulos@ntnu.no}

\thanks{CA is supported by the Austrian Science Fund (FWF), projects F-5512, I-3466, I-4945, P-34763 and Y-901. \\
\indent NT is supported by the European Research Council (ERC) under the European Union's Horizon 2020 \\
\indent research and innovation programme 
(Grant agreement No. 786758). \\
\indent AZ is supported by a postdoctoral fellowship funded by Grant 275113 of the Research Council of Norway} 

\begin{abstract} Given an irrational number $\alpha\in(0,1)$, the Sudler product is defined by $P_N(\alpha) = \prod_{r=1}^{N}2|\sin\pi r\alpha|$. Answering a question of Grepstad, Kaltenb\"ock and Neum\"uller we prove an asymptotic formula for distorted Sudler products when $\alpha$ is the golden ratio $(\sqrt{5}+1)/2$ and establish that in this case $\limsup_{N \to \infty} P_N(\alpha)/N < \infty$.  We obtain similar results for quadratic irrationals $\alpha$ with continued fraction expansion $\alpha = [a,a,a,\dots]$ for some integer $a \geq 1$, and give a full characterization of the values of $a$ for which $\liminf_{N \to \infty} P_N(\alpha)>0$ and $\limsup_{N \to \infty}   P_N(\alpha) / N  < \infty$ hold, respectively. We establish that there is a (sharp) transition point at $a=6$, 
and resolve as a by-product a problem of 
the first named author, Larcher, Pillichshammer, Saad Eddin, and Tichy. 
\end{abstract}
\date{\today}

\maketitle

\newtheorem{prop}{Proposition}
\newtheorem{claim}{Claim}
\newtheorem{lemma}{Lemma}
\newtheorem{thm}{Theorem}
\newtheorem{thma}{Theorem A}
\newtheorem{defn}{Definition}
\newtheorem{conj}{Conjecture}
\newtheorem{cor}{Corollary}

\newtheorem{quest}{Question}

\theoremstyle{definition}
\newtheorem{exmp}{Example}

\theoremstyle{remark}
\newtheorem{rmk}{Remark}

\newcommand{\be}{\beta}
\newcommand {\ve} {\varepsilon}
\newcommand{\venj}{\varepsilon_{n}^{(j)}}

\section{Introduction and statement of results -- the case of the golden ratio}

\subsection{Definition and   overview}
Given a real number $\alpha$ and an integer $N\geq 1,$ the {\em Sudler product} at stage $N$ with parameter $\alpha$ is defined as
\begin{equation} \label{defn}
	P_N(\alpha) = \prod_{r=1}^{N}2|\sin \pi r\alpha| . \end{equation}
When $\alpha= m/n$ is a rational number we can see that $P_N(\alpha) = 0$ for all $N\geq n$, so in this case the asymptotic behavior of $P_N$ as $N \to \infty$ is trivial. However, we note that for $\alpha=1/n$ and $N = n-1$ we have the important trigonometric identity $\prod_{r=1}^{n-1} 2\sin(\pi r/n)= n$, which in terms of the notation we introduced above can be written as $P_{n-1}(1/n)=n$; this identity will play a role throughout the paper. Since the periodicity of the sine function implies that $P_{N}(\alpha) = P_{N}(\{\alpha\})$, where $\{\alpha\}$ is the fractional part of $\alpha$, in order to study the asymptotic behavior of products $P_{N}(\alpha)$ as $N \to \infty$ we can restrict ourselves to irrational numbers $0\leq \alpha < 1$.\\

Sudler products have been studied in various contexts, as they appear to have connections with many different areas of research. The first known appearance of products of the form 
\eqref{defn} is in a paper of Erd\H{o}s and Szekeres \cite{erdos}.  There it is proved that 
$$ \liminf_{N\to\infty}P_N(\alpha)=0  \text{ and } \limsup_{N \to \infty}P_N(\alpha)=\infty \hspace{2mm} \text{ for almost all } \alpha,  $$
and it is conjectured that
\begin{equation} \label{erdosconj}
 \liminf_{N\to\infty}P_N(\alpha) = 0 \hspace{3mm} \text{ for all $\alpha$.}
\end{equation}
In \cite{erdos} it is also claimed without proof that the limit $E := \lim_{N\to \infty}\|P_N\|_{\infty}^{1/N}$ exists and $1<E<2.$ (Here $\|\cdot \|_{\infty}$ denotes the supremum norm on the interval $[0,1]$.) A formal proof of this claim was given by Sudler \cite{sudler}, who also gave a precise formula for the limit $E$ and provided asymptotic estimates for the points $\alpha_N\in (0,1)$ for which $\|P_N\|_{\infty} = P_{N}(\alpha_N) $. \\

The size of the $L_{\infty}$- as well as the $L_p$-norms of Sudler products and of certain subproducts has been studied extensively, and for more results we refer to \cite{atkinson, bell, jordanbell, bourgain, freiman, kol1, kol2, wright}. In the present paper we focus our attention on results concerning the pointwise growth of Sudler products. Towards this direction, the asymptotic estimate $|\sin\pi x | \asymp \|x\|, \, x\to 0 ,$ shows the intimate connection of the size of $P_N(\alpha)$ with the Diophantine properties of $\alpha$. (We write $\|x\|$ for the distance of $x$ to its nearest integer).  \\  

Pointwise estimates for Sudler products play a key role in the proof of the Ten Martini Problem by Avila and Jitomirskaya \cite{aj1}. Pointwise lower bounds for Sudler products (in the case of general irrational $\alpha$) also play a crucial role in \cite{aj2} by Avila, Jitomirskaya and Marx. Furthermore, we want to point to very recent work by Bettin and Drappeau \cite{bd1,bd2,bd3}. Following work of Zagier \cite{zagier}, they study the order of magnitude of the Kashaev invariant of certain hyperbolic knots, which in terms of the present paper could be described as an average of Sudler products with fixed rational parameter. Some aspects of their work also play an explicit (continued fractions, Diophantine approximation) or implicit (cotangent sums) role in the present paper, but some aspects (modularity, reciprocity formulas) do not play a visible role in our paper at all. In all the papers mentioned in this paragraph, the approach taken is somewhat different from the one taken in the present paper, but it seems very interesting to compare all these approaches and to find a unified picture. \\

Although the exponential growth of $\|P_N\|_{\infty}$ proved in \cite{sudler} could lead one to believe that the sequence $( P_N(\alpha) )_{N=1}^{\infty}$ also exhibits the same behaviour for most values of $\alpha$ (from the metrical point of view), it has been shown that this is not the case. Lubinsky and Saff \cite{lubinsky2} proved that $\lim_{N\to\infty}P_N(\alpha)^{1/N}=1 $ for almost all $\alpha$. Subsequently, Lubinsky \cite{lubinsky} proved several results which explicitly exhibit the underlying relation between the asymptotic order of magnitude of $P_N(\alpha)$ and the Diophantine properties of $\alpha$ as encoded in its continued fraction expansion $\alpha=[a_1,a_2,a_3,\ldots]$.  To be more specific, it is proved in \cite{lubinsky}  that for any $\ve>0$ we have 
$$ \log P_N(\alpha) \, \ll \, \log N (\log\log N)^{1+\ve}\quad \text{ as }\, N\to \infty,  \qquad \text{ for almost all } \alpha $$
(in this statement and throughout our paper, ``$\ll$'' is the usual Vinogradov symbol). We refer the reader to \cite[Theorem $1.1$]{lubinsky} for the more precise metrical statement. This result gives an upper bound on the order of magnitude of $P_N(\alpha)$ for typical $\alpha$ which is almost polynomial. On the other hand, Lubinsky showed that 
\begin{equation} \label{limsuplowerbound}
  \limsup_{N \to \infty} \frac{\log P_N(\alpha)}{\log N} \geq 1  \hspace{4mm} \text{ for all irrational } \alpha,    
\end{equation}
which means heuristically that the higher order of magnitude of $P_N(\alpha)$ is at least linear. Since then it has been conjectured 
by the first named author, Larcher, 
Saad Eddin, and Tichy \cite{products}) 
that equality is true in \eqref{limsuplowerbound}, 
and additionally that the even stronger statement 
\begin{equation} \label{folkloreconj}
\limsup_{N \to \infty} \frac{P_{N}(\alpha)}{N} <\infty 
\end{equation}
holds for all irrational $\alpha$. Green \cite{green} points out in his mathematical review of \cite{products} that not even the existence of a particular $\alpha$  satisfying \eqref{folkloreconj} is known, and mentions the specific case of $\alpha = \sqrt{2}$ as an example. Later in the paper we show that \eqref{folkloreconj} is indeed true for $\alpha=\sqrt{2}$ but fails for many quadratic irrationals: the maximal order of magnitude of  $P_{N}(\alpha)$  depends sensitively on the size of the partial quotients of $\alpha$, see Theorem \ref{lineargrowththeorem} and Corollary \ref{height corr}. \\

Lubinsky also showed in \cite{lubinsky} that 
\begin{equation} \label{liminf}
\liminf_{N \to \infty}P_N(\alpha) = 0
\end{equation} 
for any irrational $\alpha$ with unbounded partial quotients in its continued fraction expansion, while for irrational numbers $\alpha$ with bounded partial quotients (such numbers are called {\em badly approximable} in the context of Diophantine approximation) he showed that
$$  \log P_N(\alpha) \ll \log N, \hspace{4mm} \text{ as } N\to \infty, $$
where the implicit constant depends on $\alpha$. Thus for badly approximable $\alpha$ the  product $P_N(\alpha)$ has polynomial upper order of magnitude.

In addition to all results stated explicitly in \cite{lubinsky}, there is a statement which is mentioned as a byproduct of the proof of  \eqref{liminf} for irrationals with unbounded partial quotients; it is mentioned that relation \eqref{liminf} is actually also true for $\alpha$ with bounded partial quotients, provided that the partial quotients are infinitely often large enough. For a more detailed explanation of this phenomenon, we refer to Theorem $5.1$ of the survey paper \cite{sigrid3}.\\
 
More recently, Mestel and Verschueren \cite{mv} examined the behaviour of the sequence of Sudler products evaluated on the golden ratio $\frac{\sqrt{5}+1}{2}$. By periodicity the Sudler product for the golden ratio is the same as the Sudler product for $\phi := \frac{\sqrt{5}-1}{2} = \frac{\sqrt{5}+1}{2} -1 \approx 0.618\dots$, which is the conjugate of the golden ratio. Throughout our paper, $\phi$ will always denote this number (and by a very slight abuse of notation, we will call $P_N(\phi)$ the Sudler product for the golden ratio). Mestel and Verschueren established the convergence of the subsequence $P_{F_n}(\phi)$ to some positive and finite limit, where $F_n$ is the $n$-th Fibonacci number. Note that the Fibonacci numbers are the denominators of continued fraction approximations to $\phi$, which emphasizes the connection between the Sudler product and Diophantine properties of the parameter. The methods developed in \cite{mv} form the basis for many subsequent works as well as for the analysis in the current paper, and we will return to them with more details later in the course of the proofs.\\ 

Expanding the techniques of \cite{mv}, Grepstad and Neum\"uller \cite{sigrid2} showed the convergence of specific subsequences of $P_N(\alpha)$ when $\alpha$ is a {\em quadratic irrational}, while Grepstad, Kaltenb\"ock and Neum\"uller \cite{gkn} proved that for the specific case of the golden ratio we actually have 
\begin{equation} \label{liminf_pos}
\liminf_{N \to \infty}P_N(\phi) > 0, 
\end{equation}
thus disproving the conjecture \eqref{erdosconj} of Erd\H{o}s and Szekeres in \cite{erdos}.  This last result  is particularly striking, since it shows that whether $\alpha$ satisfies \eqref{liminf} or not depends on the actual size of the partial quotients of $\alpha$, and not only on whether they are bounded or not. As previously mentioned, this dependence on the size of partial quotients is a phenomenon which we also encounter in one of the main results of this paper.

\begin{rmk}
While the present paper was under review, the results obtained in this paper were extended to the case of general quadratic irrationals in \cite{gnz}. The methods developed in the present paper also play a key role in \cite{ab2}, which gives upper bounds on $\log P_N (\alpha) / \log N$ in terms of the partial quotients of $\alpha$, as well as in the papers \cite{ab3,ab1}, which address the connection with quantum knot invariants and the work of Bettin--Drappeau and Zagier which was mentioned several paragraphs above. A kind of perturbed Sudler products, somewhat similar to those in the present paper, appear in a theoretical physics context in a recent paper of Koch \cite{koch}, where they describe generalized eigenfunctions of the self-dual Hof\-stadter Hamiltonian at energy zero; the methods used there seem to be very different from the ones used in the present paper, and it would be interesting to clarify the connections. Finally, there is a very recent paper of Hauke \cite{hauke}, who showed that $\liminf_{N \to \infty} P_N(\phi)$ actually is a minimum and equals $P_1(\phi)$, and that $P_N(\phi)$ is maximized at numbers $N$ of the form $F_n-1$ for some $n$. He obtained similar results for other quadratic irrationals with small partial quotients in their continued fraction expansion; see \cite{hauke} for details. \\
\end{rmk}

\subsection{Main results, Part 1: The case of the golden ratio $\phi$}  \label{golden_case}

As noted in the introduction, throughout this paper we write $\phi=\frac{\sqrt{5}-1}{2}$ for the conjugate of the golden ratio. The continued fraction expansion of this number is $[1,1,1,\dots]$. Throughout the paper $(F_n)_{n=0}^{\infty}$ denotes the Fibonacci sequence defined by $F_0=0, F_1=1,$ and $F_{n+1}= F_n + F_{n-1}$ for all $n=1,2,\ldots$ The sequence of Fibonacci numbers is closely associated with $\phi $, since each $F_n$ is a denominator of a convergent of $\phi$. \\

The precise result obtained by Mestel and Verschueren in \cite{mv} is the following.

\begin{thma}[Mestel, Verschueren \cite{mv}] \label{th_mv}For the sequence $P_{F_n}(\phi)$, there exists a constant $C_1>0$ such that \begin{equation} \label{c}
C_1 = \lim_{n\to\infty} P_{F_n}(\phi).
\end{equation}
Moreover, for the same constant $C_1$ we have 
\begin{equation}  \label{mv2}
\lim_{n\to \infty} \frac{P_{F_n-1}(\phi)}{F_{n}} = \frac{C_1\sqrt{5}}{2\pi} \cdot
\end{equation}  
\end{thma}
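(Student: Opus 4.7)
The plan is to exploit that $F_{n-1}/F_n$ is the $n$-th convergent of $\phi$, with the identity $F_n\phi - F_{n-1} = (-1)^{n+1}\phi^n$ (easy from Binet's formula). Write $\delta_n := \phi - F_{n-1}/F_n = (-1)^{n+1}\phi^n/F_n$, so that $r\phi = rF_{n-1}/F_n + r\delta_n$ with $|r\delta_n| \le \phi^n$ uniformly for $1\le r\le F_n$. Since $\gcd(F_{n-1},F_n)=1$, the map $r\mapsto rF_{n-1}\bmod F_n$ is a bijection on $\{1,\dots,F_n-1\}$, so the trigonometric identity recalled in the paper gives $\prod_{r=1}^{F_n-1}2|\sin(\pi rF_{n-1}/F_n)| = F_n$. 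As a first step I would separate the terminal factor by writing $P_{F_n}(\phi) = P_{F_n-1}(\phi)\cdot 2|\sin(\pi F_n\phi)|$, and observe that $\|F_n\phi\| = \phi^n$, combined with $F_n\sim \varphi^n/\sqrt{5}$ for $\varphi=1/\phi$, yields
\[
F_n\cdot 2|\sin(\pi F_n\phi)| \;=\; F_n\cdot 2\sin(\pi\phi^n) \;\longrightarrow\; \frac{2\pi}{\sqrt{5}}.
\]
This makes the two halves of the theorem equivalent: \eqref{c} holds iff $P_{F_n-1}(\phi)/F_n \to C_1\sqrt{5}/(2\pi)$, which is \eqref{mv2}.

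For the bulk I would use the bijection to write
\[
\log\frac{P_{F_n-1}(\phi)}{F_n} \;=\; \sum_{r=1}^{F_n-1}\log\frac{|\sin(\pi r\phi)|}{|\sin(\pi rF_{n-1}/F_n)|},
\]
and apply a Taylor expansion of $\log|\sin|$ at $\pi rF_{n-1}/F_n$. Since the reference point is bounded away from multiples of $\pi$ by at least $\pi/F_n$ while the perturbation has absolute value at most $\pi\phi^n$, the expansion is legitimate and gives
\[
\log\frac{P_{F_n-1}(\phi)}{F_n} \;=\; T_n \,+\, \mathrm{Err}_n, \qquad T_n := \sum_{r=1}^{F_n-1}\pi r\delta_n\cot\!\left(\pi\frac{rF_{n-1}}{F_n}\right),
\]
with a second-order error $\mathrm{Err}_n$ that I would control using the classical identity $\sum_{s=1}^{F_n-1}\csc^2(\pi s/F_n) = (F_n^2-1)/3$ together with the bijection; a careful accounting (possibly after grouping $r$ by the size of $\|rF_{n-1}/F_n\|$ via the three-distance theorem) shows $\mathrm{Err}_n = o(1)$.

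The heart of the argument is then showing that the cotangent sum $T_n$ converges to a finite limit $S$. Here the purely periodic continued fraction $[1,1,1,\dots]$ of $\phi$ is essential: splitting $\{1,\dots,F_n-1\}$ according to whether $rF_{n-1}\bmod F_n$ lies below or above $F_{n-1}$ (a Zeckendorf-type decomposition) should produce a recursion of the shape $T_{n+1} = T_n + (\text{shift of }T_{n-1}) + O(\phi^n)$, whose geometric remainder gives $T_n \to S$. Equivalently, a Dedekind/Vasyunin-type reciprocity relates the sum attached to $F_{n-1}/F_n$ to the one attached to $F_{n-2}/F_{n-1}$. Setting $C_1 := e^S\cdot 2\pi/\sqrt{5}$ then yields \eqref{c}, and \eqref{mv2} follows from the first step. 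The principal obstacle is precisely the convergence of $T_n$: although each weight $r\delta_n$ is small, the factor $\cot(\pi rF_{n-1}/F_n)$ reaches size $\Theta(F_n)$ near $r\approx 1$ and $r\approx F_n-1$, so finiteness of $\lim T_n$ hinges on delicate sign cancellations reflecting the Diophantine structure of $\phi$, and the quantitative estimate required to identify the limit (and verify $C_1>0$) is the technical crux.
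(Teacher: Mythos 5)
A framing note: the paper cites Theorem~A~1 from Mestel and Verschueren \cite{mv} and does not reprove it; the underlying machinery surfaces only through the factorisation $P_{F_n}(\phi,\ve)=A_n\,B_n\,C_n$ used for Theorem~\ref{th1}, with the hard convergence of $B_n$ quoted verbatim from \cite{mv}. Your opening moves --- the terminal factor $P_{F_n}(\phi)/P_{F_n-1}(\phi)=2|\sin\pi F_n\phi|$ with $\|F_n\phi\|=\phi^n$ and $F_n\phi^n\to1/\sqrt5$, and the product identity $\prod_{r=1}^{F_n-1}2|\sin(\pi rF_{n-1}/F_n)|=F_n$ --- are correct and are also where \cite{mv} starts. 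Beyond that, your plan is genuinely different: you pass to logarithms and treat the problem as convergence of a weighted cotangent sum to be tamed by reciprocity, whereas \cite{mv} stays multiplicative, controlling the factors $B_n$ and $C_n$ directly through the Fibonacci self-similarity. Your route is closer to the Bettin--Drappeau cotangent-sum material the paper alludes to in its introduction, so it is an interesting alternative outline.

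There is, however, a concrete error that breaks the plan as written: the claim $\mathrm{Err}_n=o(1)$ is false. For those $r$ with $s:=rF_{n-1}\bmod F_n$ bounded, the reference distance is $s/F_n$ while the perturbation $|r\delta_n|=r\phi^n/F_n$ is \emph{also} of size $\Theta(1/F_n)$, since $r\equiv sF_{n-1}^{-1}\pmod{F_n}$ has size $\asymp\phi F_n$ or $\phi^2 F_n$ and $F_n\phi^n\to 1/\sqrt5$; hence $|r\delta_n|\big/(s/F_n)$ tends to a positive constant, about $\phi/\sqrt5\approx 0.28$ when $s=1$. Consequently \emph{every} Taylor order, not only the first, contributes $\Theta(1/s^2)$ from these indices. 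In particular the second-order term $-\tfrac{\pi^2}{2}\sum_r(r\delta_n)^2\csc^2(\pi rF_{n-1}/F_n)$ converges to a strictly negative constant, not to zero; even your own crude bound using $\sum_s\csc^2(\pi s/F_n)=(F_n^2-1)/3$ together with $\phi^{2n}F_n^2\to1/5$ only yields the non-vanishing estimate $\pi^2/30$. Thus $\log(P_{F_n-1}(\phi)/F_n)=T_n+\mathrm{Err}_n$ with $\mathrm{Err}_n\to E$ for some $E\ne0$, and setting $C_1=e^{S}\cdot 2\pi/\sqrt5$ would return the wrong constant even if $T_n\to S$ were proved. To repair this you would have to establish $\mathrm{Err}_n\to E$ and set $C_1=e^{S+E}\cdot 2\pi/\sqrt5$; but controlling $\mathrm{Err}_n$ then requires exactly the same self-similar Fibonacci bookkeeping as $T_n$, and the apparent simplification evaporates. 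Moreover the convergence of $T_n$ (which you rightly call the crux) and the positivity $C_1>0$ are only asserted via a plausible but unproved recursion; the sign cancellations you invoke are precisely where the work lies.
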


Regarding the value of the constant $C_1$,  Mestel and Verschueren in \cite[Theorem $1$]{mv} give the approximate value $C_1\approx 2.407 .$ However, this value seems to be purely based on experimental observation, and what they actually prove is only that $C_1$ exists and that $C_1>0$.\\

Refining the arguments of \cite{mv}, Grepstad, Kaltenb\"ock and Neum\"uller \cite{gkn} showed that $\liminf\limits_{N \to \infty}P_N(\phi)>0$, based on an analysis of perturbed Sudler products of the form \vspace{-2mm}
$$\prod_{r=1}^{N}2| \sin \pi (r \phi + \ve)|,\vspace{-2mm} $$
where $\ve$ are some specific small parameters coming from the so-called Zeckendorff representation of positive integers (see below). The significance of this perturbed version of the standard Sudler products defined in \eqref{defn} will become apparent in the heuristic analysis after the statement of the main results of this paper. \\

Following the approach of Grepstad, Kaltenb\"ock and Neum\"uller in \cite{gkn}, we define perturbed Sudler products of the form
\begin{equation} \label{sudlereps}
P_{F_n}(\phi,\ve) = \prod_{r=1}^{F_n} 2 \left| \sin \pi\left( r \phi + (-1)^{n+1} \frac{\ve}{F_n} \right) \right|.
\end{equation}
Thus we consider the perturbed Sudler products as functions of a real variable $\ve$, rather than as quantities that arise in an ad-hoc way from the Zeckendorff expansion of specific positive integers. We point out that in our text the definition of $P_{F_n}(\phi,\ve)$ is different from the one given in \cite{gkn}. The role of the factor $(-1)^{n+1}$ within the argument of the sine function will become clear during the proofs; this alternating factor reflects the fact that the error terms in successive continued fraction convergents also have alternating signs.\\

As our first main result, we establish the convergence of $P_{F_n}(\phi,\ve)$ as a sequence of functions in the variable $\ve$. Note that we already know that $\lim\limits_{n \to \infty} P_{F_n}(\phi,0) = C_1$, where $C_1>0$ is \vspace{-1.5mm} \newline the  constant from \eqref{c}.\\

For the sake of convenience, throughout the text we use the notation
\begin{equation} \label{ur}
u(r) = 2\sqrt{5}\left( r-\frac{1}{\sqrt{5}} \left(\{r\phi\}-\frac{1}{2} \right) \right) ,\hspace{5mm} r=1,2,\ldots  
\end{equation}
\, 

\begin{thm}[Existence of limit function for perturbed Sudler products] \label{th1}
For every $\ve \in \mathbb{R}$, the limit $ \lim\limits_{n \to \infty} P_{F_n}(\phi,\ve)$ exists and is equal to 
\begin{equation} \label{limit}
G(\ve) = K |\ve\sqrt{5}+1|\cdot \prod_{r=1}^{\infty} \left|1 - \frac{(2\ve\sqrt{5}+1)^2}{u(r)^2} \right|,
\end{equation}
where $K>0$ is some absolute constant and the sequence $u(r), r\geq 1,$ is as in \eqref{ur}. The convergence is uniform on any compact interval where $G$ is nonzero.\\
\end{thm}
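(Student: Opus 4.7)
The plan is to extend the Mestel--Verschueren argument for Theorem A to the perturbed setting. The starting point is the classical identity $F_n\phi - F_{n-1} = (-1)^{n+1}\phi^n$ (easily checked via Binet's formula), which yields the lattice decomposition
\[
r\phi \;=\; (\text{integer}) + \frac{y_r}{F_n} + \frac{(-1)^{n+1} r\phi^n}{F_n},
\qquad y_r := r F_{n-1} \bmod F_n.
\]
Because $\gcd(F_n,F_{n-1})=1$, the map $r \mapsto y_r$ is a bijection $\{1,\ldots,F_n\} \to \{0,\ldots,F_n-1\}$, with $y_r = 0$ attained only at $r = F_n$. Inserting this decomposition into \eqref{sudlereps}, each factor becomes
\[
2\left|\sin\pi\!\left(\frac{y_r + (-1)^{n+1}(\varepsilon + r\phi^n)}{F_n}\right)\right|.
\]

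I would then isolate the unique $y_r = 0$ factor (at $r = F_n$). Using $\sin\pi x \sim \pi x$ and $F_n\phi^n \to 1/\sqrt{5}$, this factor behaves like $\frac{2\pi}{F_n\sqrt{5}}|\varepsilon\sqrt{5}+1|$, producing the linear prefactor of $G(\varepsilon)$. For the remaining $F_n-1$ factors I would apply the classical identity $\prod_{s=1}^{F_n-1} 2\sin(\pi s/F_n) = F_n$ as a normalizer, converting the full product into $|\varepsilon\sqrt{5}+1|\cdot(\text{constants})\cdot\prod_{s=1}^{F_n-1} R_{n,s}(\varepsilon)$, where $R_{n,s}(\varepsilon)$ is the ratio of the $y_r = s$ factor to $2\sin(\pi s/F_n)$, and the $F_n$ coming out of the product identity cancels the $1/F_n$ coming from the isolated factor.

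The next step is to pair $R_{n,s}$ with $R_{n,F_n-s}$ (using $\sin(\pi - x) = \sin x$): a second-order Taylor expansion of $\sin$ around $\pi s/F_n$ shows that the linear $\varepsilon$-terms in the two partners cancel, and the paired product equals $|1 - A_{n,s}^2/B_{n,s}^2|(1 + o(1))$ for explicit $A_{n,s}$, $B_{n,s}$. A careful sign and constant accounting shows $A_{n,s} \to 2\varepsilon\sqrt{5}+1$ (the $+1$ being the surviving trace of $F_n\phi^n \to 1/\sqrt{5}$), while $B_{n,s} \to u(r(s))$ with $r(s)$ the index in $\{1,\ldots,F_n\}$ paired to $y=s$ by the bijection; the explicit form in \eqref{ur}, namely $u(r) = 2\sqrt{5}r + 1 - 2\{r\phi\}$, emerges naturally from rewriting $B_{n,s}$ in terms of $r(s)$ and $\{r(s)\phi\}$.

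The main obstacle is the rigorous passage from the finite paired product to the infinite product $\prod_{r=1}^\infty |1-(2\varepsilon\sqrt{5}+1)^2/u(r)^2|$. This requires both a correspondence between the low-$s$ finite-product indices and the low-$r$ infinite-product indices as $n \to \infty$, and a uniform-in-$n$ tail estimate controlling the contribution from the paired factors with $s$ outside a fixed window. Absolute convergence of the target infinite product is granted by $|u(r)| \gtrsim r$, giving terms of size $1 + O(r^{-2})$. The constant $K$ is the unique value that absorbs all residual normalizations, pinned down by the known limit $G(0) = C_1$ from Theorem A. Uniform convergence on compact intervals where $G$ is nonzero follows by a standard logarithmic argument, combining the continuity of each finite truncation in $\varepsilon$ with the uniform tail control; the non-vanishing hypothesis ensures the logarithm remains bounded.
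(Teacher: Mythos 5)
Your proposal reconstructs essentially the same Mestel--Verschueren--style argument that the paper uses: isolate the $r=F_n$ factor to produce the linear prefactor $|\ve\sqrt{5}+1|$, normalize the remaining $F_n-1$ factors by the cyclotomic identity $\prod_{s=1}^{F_n-1}2\sin(\pi s/F_n)=F_n$, pair $s$ with $F_n-s$ to expose the $1-(\cdot)^2/u(r)^2$ structure, and then pass to the infinite product with a uniform tail estimate. The paper's proof (via the exact $A_n\cdot B_n\cdot C_n$ factorization cited from \cite{mv,gkn}) is the same decomposition in compressed form, with the $\ve$-independent ratio $B_n$ and the $\ve$-dependent correction $C_n$ handled as in \cite{mv}; the only aspect you should be careful about is that the pairing identity $\sin(a+b_1)\sin(a-b_2)=\sin^2\!\big(a+\tfrac{b_1-b_2}{2}\big)-\sin^2\!\big(\tfrac{b_1+b_2}{2}\big)$ is exact, so replacing it by a second-order Taylor expansion with a per-pair $(1+o(1))$ error would need that error to be $o(1/F_n)$ uniformly in $s$ to avoid accumulation over the $\asymp F_n$ pairs.
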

 
The recursive structure of the Fibonacci sequence allows us to calculate the exact value of $G(\ve)$ at a certain value of $\ve$. As a consequence, we are able to determine the precise value of the constant $C_1$ in Theorem A $1$.\\

\begin{thm} \label{th2}
Let $ C_1 = \lim\limits_{n\to\infty} P_{F_n}(\phi) $ be the constant in \eqref{c}, and let the sequence $(u(r))_{r=1}^{\infty}$ be defined by \eqref{ur}. Then
$$C_1 =  (1+\phi) \cdot \prod_{r=1}^{\infty} \left( 1 - \frac{1}{u(r)^2}\right)\left( 1- \frac{(1+2\phi)^2}{u(r)^2}\right)^{-1} . $$
\end{thm}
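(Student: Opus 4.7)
The strategy is to use the Fibonacci recursion to relate $P_{F_{n+1}}(\phi)$ and $P_{F_{n-1}}(\phi)$ through a perturbed product $P_{F_n}(\phi,\ve_n)$, then invoke Theorem~\ref{th1} to pin down the otherwise unknown constant $K$ in \eqref{limit}. Since $F_{n+1} = F_n + F_{n-1}$, splitting the product at $F_{n-1}$ gives
\[
P_{F_{n+1}}(\phi) = P_{F_{n-1}}(\phi)\cdot Q_n,\qquad Q_n := \prod_{t=1}^{F_n} 2\bigl|\sin\pi(t\phi + F_{n-1}\phi)\bigr|.
\]

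Binet's formula yields $F_{n-1}\phi - F_{n-2} = (-1)^n\phi^{n-1}$, so by $1$-periodicity of $|\sin\pi\cdot|$ one may replace $F_{n-1}\phi$ inside the sine by $(-1)^n\phi^{n-1}$. Matching this shift against the definition \eqref{sudlereps} forces $\ve_n = -F_n\phi^{n-1}$, and hence $Q_n = P_{F_n}(\phi,\ve_n)$. A second application of Binet gives
\[
F_n\phi^{n-1} = \frac{\Phi - (-1)^n \phi^{2n-1}}{\sqrt 5}\xrightarrow{n\to\infty} \frac{1+\phi}{\sqrt 5},
\]
where $\Phi = 1/\phi = 1+\phi$. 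Thus $\ve_n \to \ve^* := -(1+\phi)/\sqrt 5$.

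Next I would evaluate the closed form \eqref{limit} at $\ve^*$. Using the identity $1+2\phi = \sqrt 5$, one obtains $|1+\sqrt 5\,\ve^*| = \phi$ and $(1+2\sqrt 5\,\ve^*)^2 = 5$, and since $u(r) > \sqrt 5$ for every $r\ge 1$ (a direct consequence of \eqref{ur}), every factor of $G(\ve^*)$ is strictly positive. Thus $G(\ve^*) > 0$, so Theorem~\ref{th1} delivers uniform convergence on a neighbourhood of $\ve^*$, yielding $Q_n \to G(\ve^*)$. But $P_{F_{n+1}}(\phi)/P_{F_{n-1}}(\phi)\to C_1/C_1 = 1$, so one concludes $G(\ve^*) = 1$.

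Solving this equation gives $K = 1/\bigl(\phi\prod_{r=1}^{\infty}(1 - (1+2\phi)^2/u(r)^2)\bigr)$. Substituting into $C_1 = G(0) = K\prod_{r=1}^{\infty}(1-1/u(r)^2)$ and using the defining relation $1/\phi = 1+\phi$ produces exactly the claimed formula, and the absolute values in \eqref{limit} can be dropped because all factors involved are positive. The only delicate point in this plan is confirming $G(\ve^*)\neq 0$ before invoking the uniform convergence clause of Theorem~\ref{th1}; as noted, this reduces to the easy lower bound $u(r) > \sqrt 5$ combined with the fortunate simplification $1+2\phi = \sqrt 5$.
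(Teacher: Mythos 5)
Your argument is correct and rests on exactly the same core identity as the paper's proof: splitting $P_{F_{n+1}}/P_{F_{n-1}}$ to identify it with $P_{F_n}(\phi,\ve_n)$ where $\ve_n=-F_n\phi^{n-1}\to -(1+\phi)/\sqrt5$, and taking the $n\to\infty$ limit. Where you diverge from the paper is in how the limit of $P_{F_n}(\phi,\ve_n)$ is evaluated. The paper re-enters the Mestel--Verschueren machinery, recomputes $\lim A_n$, $\lim B_n$, $\lim C_n$ at the $n$-dependent perturbation $\ve_n$ (which requires no appeal to the uniform-convergence clause of Theorem~\ref{th1}), and then uses the separate Mestel--Verschueren identity $C_1=\tfrac{2\pi}{\sqrt5}B\prod(1-1/u(r)^2)$ to eliminate the unknown constant $B$. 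You instead treat Theorem~\ref{th1} as a black box: $G(\ve^*)=1$ fixes $K$, and then $C_1=G(0)$ drops out. The two routes are algebraically identical (the constants are linked by $K=2\pi B/\sqrt5$), but yours is more self-contained in that it relies only on the statement of Theorem~\ref{th1} rather than on its internal proof and a citation to \cite{mv}. One small point you should tighten: you verify $G(\ve^*)\neq 0$ at the single point $\ve^*$, but to invoke the uniform-convergence clause of Theorem~\ref{th1} and pass to the limit along the sequence $\ve_n\to\ve^*$, you need $G$ nonzero on a compact \emph{interval} containing $\ve^*$ (together with continuity of $G$, which then follows from the uniform convergence of the continuous $P_{F_n}$). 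This is easy to supply: since $u(r)\geq 2\sqrt5-1\approx 3.47$ for all $r\geq1$ while $|1+2\sqrt5\,\ve^*|=\sqrt5\approx 2.24$, each factor $1-(1+2\sqrt5\,\ve)^2/u(r)^2$ stays bounded away from $0$ for all $\ve$ in a neighbourhood of $\ve^*$, and the tail of the product converges uniformly there.
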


\noindent Since $G(0)=C_1$, we can calculate the constant $K$ in Theorem \ref{th1} and obtain a completely explicit formula for the function $G$.

\begin{cor} \label{co_golden} The limiting function $G$ satisfies 
\begin{equation} \label{gformula}
G(\ve) = (1+\phi) |\ve\sqrt{5}+1|\cdot \prod_{r=1}^{\infty}\left| 1 - \frac{(2\ve\sqrt{5}+1)^2}{u(r)^2}\right|\left|1- \frac{(2\phi+1)^2}{u(r)^2}\right|^{-1} .
\end{equation}
\end{cor}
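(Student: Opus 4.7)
The statement is an immediate consequence of combining Theorem \ref{th1} with Theorem \ref{th2}, and the plan amounts simply to identifying the value of the constant $K$.

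First I would evaluate Theorem \ref{th1} at $\varepsilon = 0$. Since $P_{F_n}(\phi, 0) = P_{F_n}(\phi)$, the definition of $C_1$ from \eqref{c} gives $G(0) = C_1$. Substituting $\varepsilon = 0$ into the product expression \eqref{limit} yields
\begin{equation*}
C_1 \;=\; G(0) \;=\; K \cdot \prod_{r=1}^{\infty} \left|1 - \frac{1}{u(r)^2}\right|.
\end{equation*}
Before solving for $K$, I would quickly verify that all factors on the right-hand side are positive, so the absolute value signs can be dropped. A direct estimate of \eqref{ur} shows $u(r) \geq 2\sqrt 5\,(r - 1/(2\sqrt 5)) = 2\sqrt 5 \, r - 1$, so $u(r)^2 > 5 > (2\phi+1)^2 = 5$ for every $r \geq 1$ (and equality fails strictly for $r\geq 1$), which guarantees that $1 - 1/u(r)^2$ and $1 - (2\phi+1)^2/u(r)^2$ are both strictly positive.

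Next, I would use Theorem \ref{th2} to replace $C_1$ by the explicit product expression and solve for $K$:
\begin{equation*}
K \;=\; (1+\phi)\,\prod_{r=1}^{\infty} \left(1 - \frac{(2\phi+1)^2}{u(r)^2}\right)^{-1}.
\end{equation*}
Note that convergence of both infinite products separately is guaranteed because $u(r)^2$ grows like $r^2$, so we are free to regroup the factors.

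Finally, substituting this value of $K$ into the formula \eqref{limit} for $G(\varepsilon)$ and combining the two infinite products term by term yields
\begin{equation*}
G(\varepsilon) = (1+\phi)\,|\varepsilon\sqrt 5 + 1| \cdot \prod_{r=1}^{\infty} \left| 1 - \frac{(2\varepsilon\sqrt 5 + 1)^2}{u(r)^2}\right| \left|1 - \frac{(2\phi+1)^2}{u(r)^2}\right|^{-1},
\end{equation*}
which is exactly \eqref{gformula}. There is no real obstacle here — the content is entirely in Theorems \ref{th1} and \ref{th2}; the corollary is just a matter of normalizing the unspecified constant $K$ by pinning down $G(0) = C_1$.
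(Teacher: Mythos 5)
Your proposal is correct and follows essentially the same route as the paper: the paper's entire ``proof'' is the one-line remark that $G(0)=C_1$ pins down $K$, and you have simply fleshed out that normalization by substituting $\ve=0$ into Theorem~\ref{th1}, equating with the formula for $C_1$ from Theorem~\ref{th2}, and cancelling the common factor $\prod(1-1/u(r)^2)$. The only blemish is a typo in the positivity check: the displayed chain $u(r)^2 > 5 > (2\phi+1)^2 = 5$ literally asserts $5>5$; what you presumably meant, and what your own bound $u(r)\geq 2\sqrt{5}\,r-1$ gives, is $u(r)^2 \geq (2\sqrt{5}-1)^2 = 21-4\sqrt{5} > 5 = (2\phi+1)^2$, which does justify dropping the absolute values.
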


Some of the key properties of the function $G(\ve)$ are captured by the proposition below. 

\begin{prop}[Lower bound for limit function, and one special value] \label{prop1} 
We have 
\begin{equation}\label{eq: preim}
G \left(-\frac{\phi}{\sqrt{5}} \right) = 1.
\end{equation}
Furthermore, we have $G(\ve)>1.01$ for all $\ve$ in the range
\begin{equation}\label{eq: varepsilon range}
\varepsilon \in (-0.26, 0.58).
\end{equation}
The function $G$ is continuous on $\mathbb{R}$. Finally, $G$ is a $C^{\infty}$ function and it is strictly log-concave (i.e. the logarithm of $G$ is strictly concave) in any interval whose endpoints are two consecutive roots of $G$.
\end{prop}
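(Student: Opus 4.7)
\emph{Regularity and log-concavity.} I would start from the product formula of Theorem~\ref{th1}. Since $u(r) = 2\sqrt{5}\,r + O(1)$, each factor equals $1 + O(1/r^2)$ uniformly in $\varepsilon$ on compact subsets of $\mathbb{R}$, so the product converges absolutely and uniformly on compacts and $G$ is continuous. To prove smoothness and strict log-concavity on an open interval $J$ whose endpoints are two consecutive roots of $G$, note that no factor vanishes on $J$, so we may take logarithms:
\[
\log G(\varepsilon) \;=\; \log K + \log|1+\varepsilon\sqrt{5}| + \sum_{r=1}^{\infty} \log\!\left|1 - \frac{(2\varepsilon\sqrt{5}+1)^2}{u(r)^2}\right|.
\]
Writing $y = y(\varepsilon) = 2\varepsilon\sqrt{5}+1$ and factoring $1 - y^2/u(r)^2 = (1-y/u(r))(1+y/u(r))$ gives
\[
\frac{d^2}{d\varepsilon^2}\log\!\left|1 - \frac{y^2}{u(r)^2}\right| \;=\; -\frac{20}{(u(r)-y)^2} - \frac{20}{(u(r)+y)^2} \;<\; 0,
\]
and $(\log|1+\varepsilon\sqrt{5}|)'' = -5/(1+\varepsilon\sqrt{5})^2 < 0$. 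These second derivatives are $O(1/r^2)$ uniformly on compact subsets of $J$, so termwise differentiation is legitimate and $\log G$ is $C^{\infty}$ and strictly concave on $J$; higher regularity follows by the same argument.

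\emph{The special value $G(-\phi/\sqrt{5}) = 1$.} I would prove this by showing directly that $P_{F_n}(\phi, -\phi/\sqrt{5}) \to 1$ as $n \to \infty$ and then invoking Theorem~\ref{th1}. At $\varepsilon = -\phi/\sqrt{5}$ one has $1 + \varepsilon\sqrt{5} = 1 - \phi = \phi^2$, and combined with the identity $\phi(1+\phi) = 1$ this already gives the prefactor $(1+\phi)|1+\varepsilon\sqrt{5}| = \phi$ in Corollary~\ref{co_golden}. Using the Binet asymptotic $F_n^{-1} = \sqrt{5}\,\phi^n + O(\phi^{3n})$, the perturbation $(-1)^{n+1}\varepsilon/F_n$ is of the same order as the continued fraction error $(-1)^{n-1}\phi^n$ appearing in $F_n \phi = F_{n-1} + (-1)^{n-1}\phi^n$. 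One may thus compare the argument $r\phi + (-1)^{n+1}\varepsilon/F_n$ with $rF_{n-1}/F_n$ for $r = 1, \dots, F_n$: the zeroth-order product over $r = 1,\dots,F_n-1$ is evaluated by the classical identity $\prod_{r=1}^{F_n-1}2\sin(\pi r/F_n) = F_n$, the first-order correction is controlled by a cotangent sum (as in the Mestel--Verschueren analysis), and the final factor $2|\sin\pi(F_n\phi + (-1)^{n+1}\varepsilon/F_n)|$ is handled directly via the Binet formula and the choice of $\varepsilon$. Tracking these error terms through to a clean limit is the main technical obstacle.

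\emph{Lower bound on $(-0.26, 0.58)$.} Since $r \mapsto u(r)$ is strictly increasing ($u(r) - u(r-1) \geq 2\sqrt{5} - 2 > 0$), the two roots of $G$ nearest $0$ are $\varepsilon_- = -1/\sqrt{5} \approx -0.447$ (from the prefactor $|1+\varepsilon\sqrt{5}|$) and $\varepsilon_+ = (u(1)-1)/(2\sqrt{5}) = 1 - \phi/\sqrt{5} \approx 0.724$ (from the first factor of the product). Hence $[-0.26, 0.58] \subset (\varepsilon_-, \varepsilon_+) =: J$, and by the strict log-concavity proved above, $\log G$ is concave on $[-0.26, 0.58]$, so
\[
\log G(\varepsilon) \;\geq\; \min\bigl\{\log G(-0.26),\; \log G(0.58)\bigr\} \quad \text{for every } \varepsilon \in [-0.26, 0.58].
\]
It therefore suffices to verify $G(-0.26) > 1.01$ and $G(0.58) > 1.01$ by explicit computation from the formula of Corollary~\ref{co_golden}: retain the first several factors (using tabulated values of $\{r\phi\}$, hence of $u(r)$) and bound the infinite tail from below using $u(r) \geq 2\sqrt{5}\,r - 1$, so that $\prod_{r>r_0}(1 - \mathrm{const}/r^2) = 1 - O(1/r_0)$. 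The computation is routine but delicate, since both endpoints lie close to the neighbouring zeros of $G$, and one has to retain enough factors to secure the required numerical cushion above $1.01$.
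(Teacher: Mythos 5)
Your treatment of the regularity, smoothness, and log-concavity of $G$, and of the lower bound on $(-0.26, 0.58)$, coincides essentially with the paper's proof: in both, one takes logarithms, differentiates termwise after verifying $O(1/r^2)$ decay of all derivatives, computes the explicit (negative) second derivative, and then exploits concavity of $\log G$ to reduce the inequality on an interval to numerical verification at the two endpoints. Your identification of the roots $\varepsilon_- = -1/\sqrt{5}$ and $\varepsilon_+ = 1 - \phi/\sqrt{5}$ bracketing the interval is correct and matches the paper.

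The significant divergence is in the proof of the special value $G(-\phi/\sqrt{5}) = 1$, and here your plan has a genuine gap that you yourself flag (``tracking these error terms through to a clean limit is the main technical obstacle''). You propose to evaluate $\lim_n P_{F_n}(\phi, -\phi/\sqrt{5})$ directly by redoing the Mestel--Verschueren factorization $A_n B_n C_n$ at this specific perturbation, controlling $B_n$ via cotangent sums, etc. But this program, even if carried out, does not obviously produce the \emph{exact} value $1$ --- it would instead reproduce the infinite-product expression for $G$ evaluated at $\varepsilon = -\phi/\sqrt{5}$, leaving you to evaluate that product, which is precisely what you were trying to avoid. The paper sidesteps all of this with a telescoping trick: since $P_{F_n}(\phi) \to C_1 \in (0,\infty)$ (Theorem~A), the quotient $P_{F_{n+2}}(\phi)/P_{F_{n+1}}(\phi)$ trivially tends to $1$; rewriting
\[
\frac{P_{F_{n+2}}(\phi)}{P_{F_{n+1}}(\phi)} = \prod_{r=1}^{F_n} 2|\sin\pi(r\phi + (-1)^n\phi^{n+1})| = P_{F_n}\bigl(\phi,\, -F_n\phi^{n+1}\bigr),
\]
and noting $-F_n\phi^{n+1} \to -\phi/\sqrt{5}$, the \emph{uniform} convergence of $P_{F_n}(\phi,\cdot)$ to $G$ near $-\phi/\sqrt{5}$ lets one pass to the limit in the moving argument and conclude $G(-\phi/\sqrt{5}) = 1$ immediately. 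This step is the one place where the uniformity clause of Theorem~\ref{th1} is truly doing work (the perturbation is $n$-dependent), and it is the ingredient missing from your outline. You should replace your direct-evaluation program with this quotient argument; the rest of your proof is sound.
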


    \begin{figure}[h] \label{fig1}
        \includegraphics[scale=0.8]{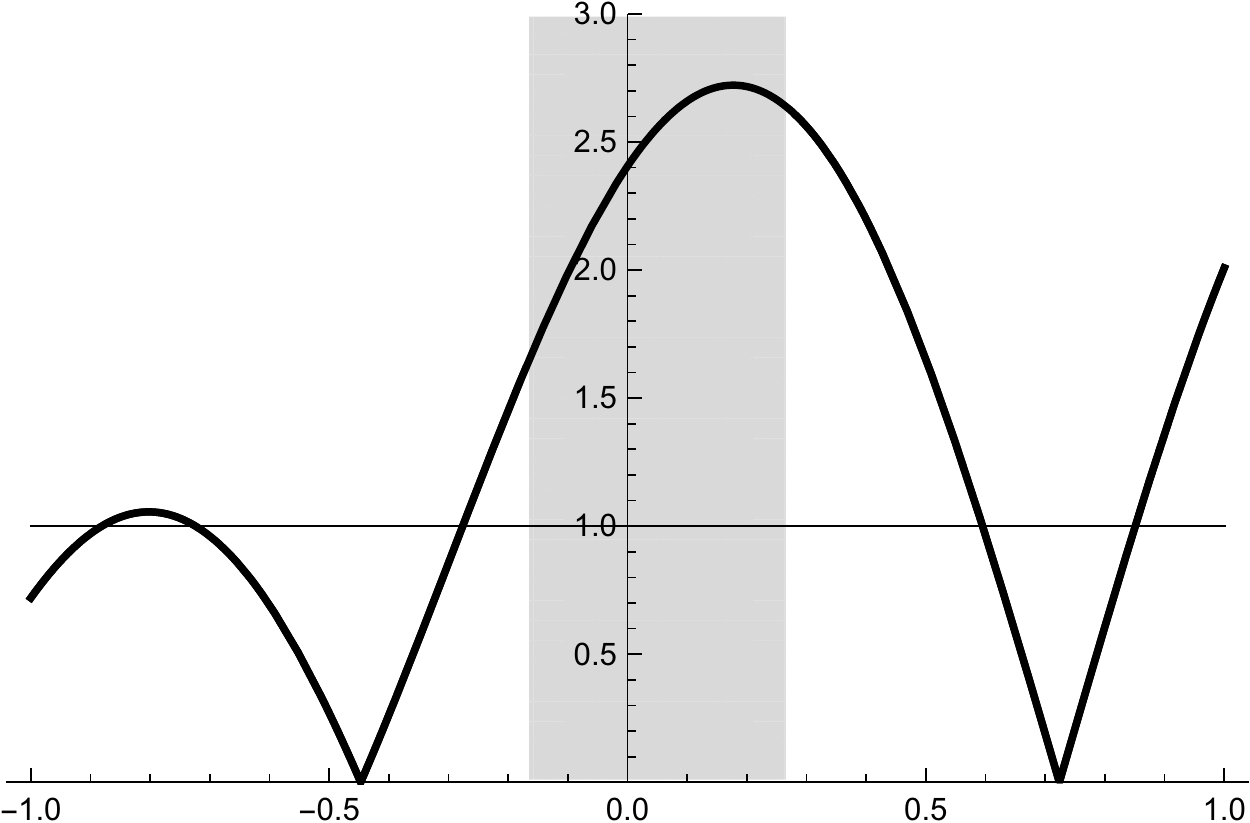}
        \caption{Plot of $G(\ve)$ in the range $-1 \leq \ve \leq 1$. The function has zeros at $\ve \approx -0.45$ and $\ve \approx 0.72$, and it equals 1 at $\ve \approx -0.28$ and $\ve \approx 0.60$. The fact that $\liminf P_N(\phi) > 0$, first proved in \cite{gkn}, can be reduced to the fact that $G(\ve)>1$ throughout the shaded range $\ve \in (-0.17, 0.27)$, since this turns out to be the range of possible perturbations $\ve$ coming from the Zeckendorff representation of positive integers. A very similar reasoning applies to the problem of establishing an upper bound for $\limsup P_N(\phi)/N$ by using a ``backward'' Zeckendorff expansion, see Theorem \ref{th3} below.}
    \end{figure}

To explain the significance of Theorems \ref{th1} and \ref{th2} and Proposition \ref{prop1}, we briefly indicate how (upper and lower) bounds for Sudler products can be calculated using the Zeckendorff representation of an integer. Let $N\geq 1$ be a positive integer. Then $N$ can be written in a unique manner in the form 
\begin{equation}\label{zeck}
N = F_{n_k} + F_{n_{k-1}} + \ldots + F_{n_1},
\end{equation}
where the integers $1\leq n_1 \leq \ldots \leq n_{k}$ are such that $n_{i+1} - n_i \geq 2$ for all $i=1,2,\ldots, k-1$. The representation of $N$ as in \eqref{zeck} is referred to as the Zeckendorff representation of $N$. We note that the Zeckendorff representation of integers is a special case of the Ostrowski expansion, to which we refer  later in the text. \\

In order to calculate the Sudler product $P_N(\phi)$, we expand $N$ into its Zeckendorff representation as above, split the full Sudler product into factors corresponding to the components of the Zeckendorff representation, and accordingly obtain
\begin{equation}  \label{zeckexpansion}
P_N(\phi)= \left( \prod_{r=1}^{F_{n_k}} 2|\sin\pi r\phi| \right) \left( \prod_{r=F_{n_k}+1}^{F_{n_k} + F_{n_{k-1}}} \!\!\!\!\!2|\sin\pi r\phi| \right) \dots \left(\prod_{r=F_{n_k}+\dots+F_{n_2} + 1}^{F_{n_k}+\dots+F_{n_1}}\!\!\!\!\!\!2|\sin\pi r\phi| \right).
\end{equation}
The first product on the right-hand side is equal to $P_{F_{n_k}}(\phi)$, and by Mestel and Verschueren's result it converges to $C_1$ as $N\to\infty$. The second product equals 
$$ \prod_{r=F_{n_k}+1}^{F_{n_k} + F_{n_k-1}}\hspace{-3mm} 2 |\sin \pi r \phi| = \prod_{r=1}^{F_{n_{k-1}}} 2 |\sin\pi (r \phi + F_{n_k} \phi)|, $$
so it is of the form  $P_{F_n}(\phi,\ve_k)$, 
as defined in \eqref{sudlereps},  with $\ve_k = (-1)^{1+n_{k-1}} F_{n_{k-1}}\{F_{n_k} \phi\}$. 
Consequently, by Theorem \ref{th1}, this factor is roughly $G(\ve_k)$, assuming that $n_{k-1}$ is ``large''.  Using the recursive structure of the Fibonacci sequence  --- see \eqref{phi1} and \eqref{phi2} below ---  we have $|\ve_k| =  F_{n_{k-1}}\phi^{n_k}$  and one can show that, for sufficiently large $N$, 
this implies that $\ve$ lies within the range  $(-\phi^3/ \sqrt{5}, \phi^2/\sqrt{5})$. The other products on the right-hand side of \eqref{zeckexpansion} are estimated in a similar way, and give contributions of size roughly $G(\ve_j)$ for some appropriate values of $\ve_j$ depending on the Zeckendorff expansion of $N$. Thus the problem of estimating the whole Sudler product $P_N(\phi)$ essentially boils down to estimating a product of values of the limit function $G$, evaluated at positions (``perturbations'') which depend on the Zeckendorff representation of $N$ in a relatively simple way.\\

Using continued fractions, it turns out that in such a product we can only encounter values of the perturbation variable which are within the range $(-\phi^2/\sqrt{5}, \phi/\sqrt{5})$, which is roughly $(-0.17, 0.27)$, with the possible exception of finitely many indices at the final segment of the representation, which correspond to sub-products of short length. Note that throughout the range $(-0.17, 0.27)$ we have $G(\ve)>1$ (cf.\ Proposition \ref{prop1} and Figure 1). Thus, heuristically speaking, $P_N(\phi)$ decomposes into a product of factors which are all at least $1$ (with the possible exception of finitely many indices at the final segment), and thus $\liminf\limits_{N\to\infty}P_N(\phi)$ cannot be equal to 0.\\

The analysis carried out in \cite{gkn} to show \eqref{liminf_pos} involves precisely these arguments, but the authors directly provide estimates for the perturbed products without having first established the convergence of the sequence $P_{F_n}(\phi,\ve)$ in \eqref{sudlereps}. A crucial advantage of the ``functional'' approach is that once we have established the existence of a limit function $G(\ve)$, together with a concavity property, it is not necessary anymore to estimate $G(\ve)$ for all perturbations $\ve$ that we might encounter during the proof, but it is rather sufficient that we show that the value of $G$ exceeds $1$ at two appropriate left and right endpoints; concavity of $\log G$ then tells us that  we must also have $G(\ve)>1$ everywhere in between.\\

In the sequel we  show that Theorem \ref{th1} also allows us to calculate \emph{upper} bounds for the size of the Sudler product: more precisely,  $P_N(\phi)$ grows at most linearly in $N$.

\begin{thm} \label{th3}
We have
$$ \limsup_{N \to \infty} \frac{P_N(\phi)}{N} < \infty. \vspace{4mm} $$
\end{thm}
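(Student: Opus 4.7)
The plan is to compare $P_N(\phi)$ against the reference quantity $P_{F_{n+1}-1}(\phi)$, which by \eqref{mv2} is asymptotic to $\tfrac{C_1\sqrt{5}}{2\pi}F_{n+1}$ and hence of size $\Theta(N)$. Choosing $n$ with $F_n \leq N < F_{n+1}$ and writing
$$P_{F_{n+1}-1}(\phi) \;=\; P_N(\phi) \cdot Q_N(\phi), \qquad Q_N(\phi) \;:=\; \prod_{r=N+1}^{F_{n+1}-1} 2|\sin\pi r\phi|,$$
reduces the theorem to producing a uniform lower bound $Q_N(\phi) \geq c>0$. This is the \emph{backward Zeckendorff} strategy alluded to in the caption of Figure 1.

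To analyse $Q_N(\phi)$, I would substitute $s = F_{n+1}-r$ and use the Binet--formula identity $F_{n+1}\phi \equiv (-1)^n \phi^{n+1} \pmod 1$ to transform the product into
$$Q_N(\phi) \;=\; \prod_{s=1}^{M} 2\bigl|\sin\pi\bigl(s\phi - (-1)^n\phi^{n+1}\bigr)\bigr|,$$
where $M := F_{n+1}-1-N$ lies in $\{0,\dots,F_{n-1}-1\}$. I would then expand $M = F_{m_1}+\cdots+F_{m_\ell}$ in Zeckendorff form with $n-1 \geq m_1$ and $m_i - m_{i+1}\geq 2$, and split $Q_N(\phi)$ into $\ell$ consecutive blocks of lengths $F_{m_j}$, exactly mirroring the decomposition \eqref{zeckexpansion} but run from right to left.

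After translating indices, the $j$-th block has the shape $\prod_{r=1}^{F_{m_j}} 2|\sin\pi(r\phi + \delta_j)|$, where $\delta_j$ collects the initial shift $-(-1)^n\phi^{n+1}$ together with the fractional parts $F_{m_i}\phi \equiv (-1)^{m_i-1}\phi^{m_i} \pmod 1$ for $i<j$. Since the exponents $m_i$ decrease by at least $2$ at each step, the geometric sum $\sum \phi^{m_i}$ is dominated by its last term, and Binet's formula yields that the rescaled perturbation parameter $\ve_j := (-1)^{m_j+1}F_{m_j}\delta_j$ in the convention of \eqref{sudlereps} lies in (a small neighbourhood of) $(-\phi^2/\sqrt 5,\,\phi^2/\sqrt 5)$, hence well inside the range $(-0.26,0.58)$ in which Proposition \ref{prop1} asserts $G(\ve)>1.01$.

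Combining Theorem \ref{th1} with its uniform-convergence statement on compact subsets of the good range, each block with $m_j$ beyond a fixed threshold will contribute a factor at least $1.005$, say. The $O(1)$ many ``short'' trailing blocks each contribute a bounded positive constant, since their $\delta_j$ remain bounded away from the values at which $G$ vanishes. Multiplying over all $\ell$ blocks yields $Q_N(\phi) \geq c>0$ uniformly in $N$, which combined with the first paragraph delivers the claimed linear upper bound. The hard part will be the careful bookkeeping of the perturbations $\delta_j$ with their alternating signs and the verification that the compact set they traverse is one on which Theorem \ref{th1} converges uniformly and on which the strict bound $G>1$ of Proposition \ref{prop1} is available; the log-concavity statement in Proposition \ref{prop1} is what lets one upgrade pointwise positivity of $\log G$ into a uniform positive lower bound on an explicit closed interval.
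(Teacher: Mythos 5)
Your proposal follows the paper's own argument closely: reflect at a Fibonacci number (the paper uses $F_n-1$ with $F_{n-1}\leq N+1<F_n$; your $F_{n+1}-1$ with $F_n\leq N<F_{n+1}$ is the same up to relabeling), expand the reflected remainder in Zeckendorff form, decompose $Q_N(\phi)$ into perturbed Sudler products $P_{F_{m_j}}(\phi,\ve_j)$, and use geometric-series bounds together with Proposition \ref{prop1} to show that every factor with $m_j$ beyond a threshold eventually exceeds $1$. (A small slip in passing: the positive geometric contributions can add up to $\phi^2+\phi^4+\cdots=\phi$, since Zeckendorff gaps are $\geq 2$, so the asymptotic perturbation range is $(-\phi^2/\sqrt5,\phi/\sqrt5)\approx(-0.18,0.28)$ rather than $(-\phi^2/\sqrt5,\phi^2/\sqrt5)$; this is still comfortably inside $(-0.26,0.58)$, so nothing breaks.)

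The genuine gap is the treatment of the $O(1)$ many short trailing blocks. You assert they ``contribute a bounded positive constant, since their $\delta_j$ remain bounded away from the values at which $G$ vanishes'', but for a block whose index $m_j$ lies below the threshold, Theorem \ref{th1} gives no information and $G$ is simply not the relevant object; what must be shown is that each individual sine factor in such a short sub-product is uniformly bounded away from zero, over all $N$. This is not automatic: for a short block the cumulative offset (the sum of the remaining $\pm\phi^{m_i}$ with $m_i\geq m_j+2$, together with the reflected tail $\mp\phi^{n+1}$) has magnitude comparable to $\phi^{m_j}$, while $\min_{1\le t\le F_{m_j}}\|t\phi\|\asymp\phi^{m_j}$ is attained at $t=F_{m_j}$, so there is a priori room for a near-cancellation making one of the sine factors arbitrarily small. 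The paper rules this out by splitting the total offset into a ``high-Fibonacci'' part and a ``low-Fibonacci'' part, bounding each via the best-approximation properties of the convergents and the sign pattern forced by \eqref{phi1}, and verifying that the signed fractional part of each argument avoids a symmetric punctured neighbourhood of $0$. Your proposal would need to supply this step.
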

 
\noindent  For the proof of Theorem \ref{th3} we will utilize the following decomposition. 
Given $N\geq 1$, when $n$ is such that $F_{n-1} \leq N + 1 < F_{n}$  we write 
\begin{equation} \label{reflectionprinciple}
 P_N(\phi) \,= \, \dfrac{ P_{F_{n}-1} (\phi)}{ \prod\limits_{r=N+1}^{F_{n}-1} 2 |\sin \pi r \phi|} \, = \, \frac{P_{F_{n}-1}(\phi) }{ \prod\limits_{r=1}^{F_{n}-N-1} 2 | \sin \pi (r-F_n) \phi | } \, \cdot
\end{equation}
The motivation for this decomposition is that by \eqref{mv2},  the quotients $P_{F_n-1}(\phi) /F_n $ are bounded from above, and since $F_{n-1}\leq N+1 < F_n$ we can show that $P_{F_n-1}(\phi)/N$ will be also bounded from above. It remains to control the product $\prod_{r=1}^{F_{n}-N-1} 2 | \sin \pi (r-F_n) \phi |$. This can be factorised as in \eqref{zeckexpansion}, but with an additional perturbation coming from the term $- F_n \phi$. Since $\|F_n\phi\|$ is sufficiently small, it will turn out that this additional perturbation does not cause any particular problems. \\

\noindent In other words, relation \eqref{reflectionprinciple} shows that the lower asymptotic order of magnitude of $P_N(\phi)$ reflects on the upper asymptotic order of magnitude of $P_N(\phi)/N$; for this reason, throughout the text we refer to \eqref{reflectionprinciple} as the reflection principle. \\

Thus the problem of finding a finite upper bound for $\limsup P_N(\phi )/N$ is, in a very natural sense, the dual problem of finding a positive lower bound for $\liminf P_N(\phi)$, and as sketched above this can be done by showing that $G(\ve)>1$ in an appropriate range of values of $\ve$. (In this particular case the range turns out to be roughly $(-0.10,0.17)$, which is even smaller than the range of possible perturbations in the $\liminf$ problem). See Section \ref{sec:proofs_phi} below for details.

\subsection{Main results, Part 2: The case of quadratic irrationals $\beta=[b,b,b,\ldots]$} \label{sec_main_2}
As already mentioned, it has been  conjectured that $\limsup_{N \to \infty} P_N(\alpha)/N$ is finite for \emph{all} irrationals $\alpha$. However, we will show that this is false even if $\alpha$ is restricted to the class of quadratic irrationals whose continued fraction expansion is of the simplest possible form. \\

\noindent For any positive integer $b\geq 1$, let 
$$ \beta = \beta(b) = [b,b,b,\ldots] $$
be the quadratic irrational with all partial quotients in its continued fraction expansion  equal to $b$. It is well-known that 
\begin{equation}\label{eq: explicit form of beta}
\beta=\frac{1}{2}(-b+\sqrt{b^2+4}),
\end{equation}
and if $(q_n)_{n=0}^{\infty}$ are the denominators of the convergents of $\beta$, then by induction we have that
\begin{equation}\label{b1}
\beta = \frac{q_{n-1}}{q_n} + (-1)^{n}\frac{\beta^{n+1}}{q_n}, \,\hspace{5mm} n=1,2,\ldots
\end{equation}
and
\begin{equation} \label{b2}
q_n = \frac{1}{\sqrt{b^2+4}} \left(\beta^{-(n+1)}-(-\beta)^{n+1} \right), \,\hspace{5mm} n=1,2,\ldots 
\end{equation}

The generalisation of the theorem of Mestel and Verschueren (Theorem A $1$ above) to the case of quadratic irrationals of the form $\be = \be(b)$ looks as follows.

\begin{thma} \label{generalthm}
	Let $\be=[b,b,b,\ldots]$, and let $(q_n)_{n=1}^{\infty}$ be the sequence of denominators associated with its continued fraction expansion. There exists a constant $C_b>0$ such that
\begin{equation} \label{cb}
C_b = \lim_{n\to\infty} P_{q_n}(\be).
\end{equation}
Moreover, for the constant $C_b$ we have
\begin{equation} \label{cb2}
\lim_{n\to\infty} \frac{P_{q_n-1}(\be)}{q_n} = \frac{C_b \sqrt{b^2+4}}{2\pi} \cdot
\end{equation}
\end{thma}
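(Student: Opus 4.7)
The plan is to extend the argument that Mestel and Verschueren use for the golden ratio (Theorem A 1) to $\beta=[b,b,b,\ldots]$, with the Fibonacci recursion $F_{n+1}=F_n+F_{n-1}$ replaced by $q_{n+1}=bq_n+q_{n-1}$. The overarching strategy is to first prove an analogue of Theorem \ref{th1} for $\beta$: the perturbed Sudler products
\begin{equation*}
P_{q_n}(\beta,\varepsilon)\,:=\,\prod_{r=1}^{q_n}2\left|\sin\pi\!\left(r\beta+(-1)^{n+1}\tfrac{\varepsilon}{q_n}\right)\right|
\end{equation*}
converge, uniformly on compact sets avoiding the zeros of the limit, to a continuous function $G_b(\varepsilon)$. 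The first assertion of the theorem is then immediate with $C_b := G_b(0)$, and positivity is guaranteed because $\varepsilon=0$ lies in a neighbourhood where $G_b$ is non-zero.

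To prove convergence of $P_{q_n}(\beta,\varepsilon)$, I would use $q_{n+1}=bq_n+q_{n-1}$ to split
\begin{equation*}
P_{q_{n+1}}(\beta,\varepsilon)\,=\,\prod_{j=0}^{b-1}\prod_{s=1}^{q_n}2|\sin\pi(s\beta+jq_n\beta+\tau)|\,\cdot\,\prod_{s=1}^{q_{n-1}}2|\sin\pi(s\beta+bq_n\beta+\tau)|,
\end{equation*}
where $\tau$ absorbs the perturbation $(-1)^{n+2}\varepsilon/q_{n+1}$ and depends on the block. By \eqref{b1} we have $q_n\beta\equiv(-1)^n\beta^{n+1}\pmod 1$, so each inner block is itself a perturbed Sudler product at stage $q_n$ or $q_{n-1}$ with an additional shift of size $O(\beta^{n+1})$. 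Iterating yields a telescoping representation of the limit as an infinite product. The technical work consists in controlling the resonant sine factors, namely the indices $s$ for which $\|s\beta\|$ is very small, by separating them out and expanding to second order in the perturbation. Because the shifts decay geometrically in $n$ (with ratio $\beta<1$) and the number of resonant terms per block is bounded by an absolute constant, the resulting error series converges; this is exactly the mechanism that underlies \cite{mv}, generalised from $b=1$ to arbitrary $b\geq 1$.

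For the second statement I would use the elementary identity $P_{q_n}(\beta)=P_{q_n-1}(\beta)\cdot 2|\sin\pi q_n\beta|$, which gives
\begin{equation*}
\frac{P_{q_n-1}(\beta)}{q_n}\,=\,\frac{P_{q_n}(\beta)}{q_n\cdot 2|\sin\pi q_n\beta|}.
\end{equation*}
The numerator tends to $C_b$ by the first part. For the denominator, \eqref{b1} yields $\|q_n\beta\|=\beta^{n+1}$, hence $2|\sin\pi q_n\beta|\sim 2\pi\beta^{n+1}$ as $n\to\infty$, while \eqref{b2} yields $q_n\beta^{n+1}\to (b^2+4)^{-1/2}$. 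Therefore $q_n\cdot 2|\sin\pi q_n\beta|\to 2\pi/\sqrt{b^2+4}$, and \eqref{cb2} drops out.

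The principal obstacle is the first step: the uniform convergence of the perturbed products $P_{q_n}(\beta,\varepsilon)$, and specifically the delicate treatment of the resonant sine factors whose magnitude is comparable to $\beta^{n+1}$. Everything else is a quantitative but conceptually routine adaptation of the Fibonacci case, and the geometric decay of the shifts ensures summability of all error terms arising in the telescoping.
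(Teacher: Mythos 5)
The result you are asked to prove is stated in the paper as ``Theorem A 2,'' i.e.\ as a previously known result: equation \eqref{cb} is cited directly to Grepstad and Neum\"uller \cite[Theorem~1.2]{sigrid2}, and \eqref{cb2} is said to follow ``arguing as in \cite[Corollary~8.1]{mv}.'' So the paper gives no new proof of this statement; you should keep that in mind when assessing the scope of what you need to supply.

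Your derivation of \eqref{cb2} from \eqref{cb} is complete and correct, and is almost certainly the ``easy deduction'' the paper alludes to. The identity $P_{q_n}(\beta)=P_{q_n-1}(\beta)\cdot 2|\sin\pi q_n\beta|$ is immediate from the definition, and combining $\|q_n\beta\|=\beta^{n+1}$ from \eqref{b1} with $q_n\beta^{n+1}\to(b^2+4)^{-1/2}$ from \eqref{b2} gives $q_n\cdot 2|\sin\pi q_n\beta|\to 2\pi/\sqrt{b^2+4}$, from which \eqref{cb2} drops out. No issues there.

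For \eqref{cb}, however, your sketch differs in mechanism from what the cited works (and the paper's own Theorem~\ref{th6}, which generalizes the same machinery) actually do, and there is a real gap. Mestel--Verschueren and Grepstad--Neum\"uller do \emph{not} proceed by recursively splitting $P_{q_{n+1}}$ into $b$ blocks of length $q_n$ plus one of length $q_{n-1}$ and telescoping. They use the three-factor decomposition $P_{q_n}(\beta,\ve)=A_n\cdot B_n\cdot C_n$ visible in the proof of Theorem~\ref{th6}: $A_n$ isolates the single fully resonant factor at $r=q_n$, $B_n$ compares the near-resonant sines to the exact roots-of-unity sines $\sin(\pi r/q_n)$ (so that the trigonometric identity $\prod_{r=1}^{q_n-1}2\sin(\pi r/q_n)=q_n$ can be exploited), and $C_n$ collects the correction coming from the perturbation. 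The hard part is precisely the convergence of $B_n$, and this is what \cite{sigrid2} is cited for. Your proposed block-telescoping scheme is not obviously equivalent: iterating the recursion $q_{n+1}=bq_n+q_{n-1}$ does not produce a telescoping product of stages but rather an Ostrowski-type decomposition of the interval $\{1,\dots,q_{n+1}\}$, and a Cauchy-criterion argument built on $P_{q_{n+1}}/P_{q_n}$ would itself require showing that this ratio converges, which is no easier than the original problem. You correctly flag ``controlling the resonant sine factors'' as the obstacle, but you do not say how to overcome it, and the block decomposition you propose does not by itself separate the resonant index $r=q_n$ from the rest or give access to the exact $\sin(\pi r/q_n)$ product. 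In short: the second half of your argument is correct and matches the paper's intent; the first half names the right difficulty but replaces the actual $A_n B_n C_n$ factorization with a telescoping heuristic that is not carried out and whose feasibility is not demonstrated.
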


Relation \eqref{cb} is a special case of \cite[Theorem 1.2]{sigrid2} by Grepstad and Neum\"uller, which covers the case of arbitrary quadratic irrationals. The limit in \eqref{cb2} is not stated explicitly in \cite{sigrid2}, but it can be deduced easily from \eqref{cb} arguing as in \cite[Corollary 8.1]{mv}.\\

We prove an analogue of Theorem \ref{th1} for perturbed Sudler products in the case of the irrational $\be$. The objects of our study are now the functions 
\begin{equation}\label{pqndefinition}
P_{q_n}(\be,\ve) = \prod_{r=1}^{q_n}2\left|\sin \pi \left(r\be + (-1)^n \frac{\ve}{q_n} \right)  \right|.
\end{equation}
When the irrational $\beta$ is fixed, $q_n = q_n(\be)$ always denotes the denominator of the $n$-th convergent of $\be$. Furthermore, for convenience we write $P_{q_n}(\be,\ve)$ instead of $P_{q_n(\be)}(\be,\ve)$, which would be more accurate.
We also define 
\begin{equation} \label{ubr}
u_b(r)=2\sqrt{b^2+4}\left(r-\frac{1}{\sqrt{b^2+4}}\left(\{r\beta\}-\frac{1}{2}\right)\right), \hspace{2mm} r=1,2,\ldots
\end{equation}
Note that these formulas are straightforward generalisations of the ones for the case of the golden mean, which corresponds to the case $b=1$ and which we considered in Section \ref{golden_case} above.

\begin{rmk}The difference of the factors $(-1)^n$ and $(-1)^{n+1}$ in \eqref{sudlereps} and \eqref{pqndefinition} corresponds to the fact that the $n$--th convergent of $\phi$ has denominator $F_{n+1}$. We have adopted this notation in our results relevant to the golden ratio in order to remain consistent with \cite{mv}.
\end{rmk} 

The following result is a generalisation of Theorem \ref{th1}. \\

\begin{thm}[Existence of limit function for perturbed Sudler products] \label{th6}
	For every $\ve \in \mathbb{R}$, the limit $ \lim\limits_{n \to \infty} P_{q_n}(\be,\ve)$ exists and is equal to 
	\begin{equation} \label{limit_pert}
	G_\be(\ve) = K_b \cdot |\ve\sqrt{b^2+4}+1|\cdot \prod_{r=1}^{\infty} \left|1 - \frac{(2\ve\sqrt{b^2+4}+1)^2}{u_b(r)^2} \right|,
	\end{equation}
	where $K_b>0$ is some absolute constant and $u_b(r)$ is defined in \eqref{ubr}. The convergence is uniform on any compact interval where $G_{\beta}$ is nonzero. \vspace{1mm} \end{thm}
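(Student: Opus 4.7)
The plan is to generalize the proof of Theorem~\ref{th1} (the golden-ratio case) by replacing Fibonacci-specific ingredients with the two structural identities \eqref{b1} and \eqref{b2}. As a first step, I would use \eqref{b1} to rewrite each angle: since $r\beta\equiv s_r/q_n+(-1)^n r\beta^{n+1}/q_n\pmod 1$, where $s_r:=rq_{n-1}\bmod q_n$, and since $\gcd(q_{n-1},q_n)=1$ makes $r\mapsto s_r$ a bijection of $\{1,\ldots,q_n\}$ onto $\{0,\ldots,q_n-1\}$ with $s_{q_n}=0$, the product becomes
\[
P_{q_n}(\beta,\varepsilon)=\prod_{r=1}^{q_n}2\left|\sin\pi\frac{s_r+\delta_r}{q_n}\right|,\qquad \delta_r:=(-1)^n(r\beta^{n+1}+\varepsilon).
\]
I then isolate the $r=q_n$ term (the ``singular'' factor, where $s_{q_n}=0$) and apply the trigonometric expansion $\sin\pi(s+\delta)/q_n=\sin(\pi s/q_n)[\cos(\pi\delta/q_n)+\cot(\pi s/q_n)\sin(\pi\delta/q_n)]$ to the remaining ones. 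Combined with $\prod_{s=1}^{q_n-1}2\sin(\pi s/q_n)=q_n$, this yields
\[
P_{q_n}(\beta,\varepsilon) = \bigl(q_n\cdot 2|\sin(\pi\delta_{q_n}/q_n)|\bigr)\cdot \prod_{r=1}^{q_n-1}|F_{n,r}(\varepsilon)|,
\]
where $F_{n,r}(\varepsilon):=\cos(\pi\delta_r/q_n)+\cot(\pi s_r/q_n)\sin(\pi\delta_r/q_n)$. By \eqref{b2}, $q_n\beta^{n+1}\to 1/\sqrt{b^2+4}$, so the parenthesised singular prefactor converges to $(2\pi/\sqrt{b^2+4})\,|1+\varepsilon\sqrt{b^2+4}|$, producing the factor $|\varepsilon\sqrt{b^2+4}+1|$ of $G_\beta(\varepsilon)$ (the numerical constant will be absorbed into $K_b$).

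The heart of the proof is the analysis of $\prod_{r<q_n}|F_{n,r}(\varepsilon)|$. I would pair each index $r$ with $q_n-r$: one has $s_{q_n-r}=q_n-s_r$, $\cot(\pi(q_n-s_r)/q_n)=-\cot(\pi s_r/q_n)$, and $\delta_r+\delta_{q_n-r}=(-1)^n(q_n\beta^{n+1}+2\varepsilon)$. A product-to-sum manipulation shows that each pair $F_{n,r}F_{n,q_n-r}$ reduces, up to an error of order $\beta^{n+1}$ coming from $\delta_r-\delta_{q_n-r}=(-1)^n(2r-q_n)\beta^{n+1}$, to a factor of the form
\[
\left|1-\frac{\sin^2\bigl(\pi(q_n\beta^{n+1}+2\varepsilon)/(2q_n)\bigr)}{\sin^2(\pi s_r/q_n)}\right|.
\]
Now $2q_n\sin(\pi(q_n\beta^{n+1}+2\varepsilon)/(2q_n))\to (\pi/\sqrt{b^2+4})(2\varepsilon\sqrt{b^2+4}+1)$ by \eqref{b2}, while a careful analysis relating $s_r/q_n$ to $\{r\beta\}$ (using $s_r/q_n=\{r\beta\}-(-1)^n r\beta^{n+1}/q_n$ together with the symmetrisation forced by the pairing) identifies the limit of $q_n\sin(\pi s_r/q_n)/\pi$ with $u_b(r)/(2\sqrt{b^2+4})$. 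Combining these identifications, each paired factor tends to the $r$-th factor $|1-(2\varepsilon\sqrt{b^2+4}+1)^2/u_b(r)^2|$ of the infinite product in \eqref{limit_pert}.

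The main obstacle is this last identification, and in particular explaining why the shift $(\{r\beta\}-1/2)/\sqrt{b^2+4}$ in the definition \eqref{ubr} of $u_b(r)$ emerges. It arises from keeping track of the next-order terms in the expansion of $\sin(\pi s_r/q_n)$ (rather than just $\sin(\pi\{r\beta\})$); the $-1/2$ reflects the symmetrisation between $r$ and $q_n-r$ induced by the pairing, which effectively centres the perturbation around $q_n/2$. Once this identification is in place, the uniform convergence on any compact subset of $\{\varepsilon: G_\beta(\varepsilon)\neq 0\}$ follows from a Weierstrass $M$-test: since $u_b(r)\asymp r$, each factor of the infinite product is $1+O(1/r^2)$, so the product converges absolutely and uniformly, and the finite-$n$ tails are controlled by $O(q_n^{-1}+\beta^{n+1})$ uniformly in $\varepsilon$ on compact sets where $G_\beta$ is bounded away from zero.
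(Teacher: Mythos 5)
Your decomposition reproduces, from scratch, the Mestel--Verschueren factorisation that the paper simply cites from \cite{mv} (Lemma~5.1) and \cite{sigrid2}; the singular factor $q_n\cdot 2|\sin(\pi\delta_{q_n}/q_n)|$ is exactly the paper's $A_n(\beta,\varepsilon)$, and your $\prod_{r<q_n}|F_{n,r}|$ is the paper's $B_n(\beta)\cdot C_n(\beta,\varepsilon)$. So the overall route is the same; what differs is that you attempt to re-derive the internal lemma rather than invoking it, and in doing so you lose track of the hardest part.

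The gap is in the pairing step. Writing $\theta=\pi s_r/q_n$, $a=\pi\delta_r/q_n$, $b=\pi\delta_{q_n-r}/q_n$, $\mu=(a+b)/2$, $\nu=(a-b)/2$, the exact identity is
\[
F_{n,r}F_{n,q_n-r}=\frac{\sin(\theta+a)\sin(\theta-b)}{\sin^2\theta}
=\frac{\sin^2(\theta+\nu)}{\sin^2\theta}\left(1-\frac{\sin^2\mu}{\sin^2(\theta+\nu)}\right).
\]
Your proposal keeps only the second bracket (with $\sin^2\theta$ in place of $\sin^2(\theta+\nu)$) and absorbs the rest into ``an error of order $\beta^{n+1}$''. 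But while $\nu=O(\beta^{n+1})$, the multiplicative error in a single pair is $\sin^2(\theta+\nu)/\sin^2\theta=1+2\nu/\theta+\cdots$, and for the reindexed variable $\rho=s_r$ near the low end, $\theta\asymp \rho/q_n$, so $\nu/\theta\asymp q_n\beta^{n+1}\cdot(r/q_n-1/2)/\rho\asymp 1/\rho$ --- not $O(\beta^{n+1})$, and not uniformly small. Accumulated over all pairs, $\prod_{\rho}\sin^2(\theta+\nu)/\sin^2\theta$ is precisely the paper's $B_n(\beta)^2$, whose log is a conditionally convergent cotangent-type sum $\sum_\rho(\{\pm\rho\beta\}-1/2)/\rho$. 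Proving that $B_n(\beta)\to B_b$ for some finite $B_b>0$ is explicitly flagged in the paper as ``the most difficult part of the proof'' in \cite{mv}, and the resulting constant $B_b$ is exactly what makes $K_b=2\pi B_b/\sqrt{b^2+4}$ rather than $2\pi/\sqrt{b^2+4}$. Your proposal silently sets $B_b=1$: the pairwise error claim is false, and the cancellation argument needed to get convergence of this distortion factor is entirely missing. This is a genuine gap; if one is not going to cite \cite{mv}/\cite{sigrid2} for it, one must supply a proof that $B_n$ converges, which is a non-trivial equidistribution estimate, not a Weierstrass $M$-test.
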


We can also prove generalised versions of Theorem \ref{th2} and of Corollary \ref{co_golden}. 

\begin{thm} \label{cbvaluethm}
Let $C_b>0$ be the constant as in \eqref{cb}. Then
\begin{equation} \label{cbvalue}
C_b^b = \frac{1}{\beta(\beta+1)\cdots(\beta+b-1)} \prod_{r=1}^{\infty}\prod_{j=1}^{b}\left(1- \frac{1}{u_b(r)^2}\right) \left( 1 - \frac{(2b-2j+2\beta+1)^2}{u_b(r)^2} \right)^{-1}.
\end{equation}
\end{thm}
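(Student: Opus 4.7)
The plan is to follow the strategy that derives Theorem \ref{th2} from Theorem \ref{th1}, but applied at $b+1$ carefully chosen values of $\ve$. First, evaluating Theorem \ref{th6} at $\ve=0$ yields $C_b = K_b \prod_{r \geq 1}(1 - 1/u_b(r)^2)$, so the task reduces to identifying $K_b^b$. Next, I will evaluate $G_\beta$ at each of the $b$ points $\ve_k := (\beta+k)/\sqrt{b^2+4}$, for $k=0, 1, \ldots, b-1$. A short arithmetic check shows $u_b(r) > 2\beta+2k+1$ for every $r \geq 1$ and $k \in \{0, \ldots, b-1\}$, so the absolute values in \eqref{limit_pert} can be dropped and
$$G_\beta(\ve_k) = K_b(\beta+k+1)\prod_{r \geq 1}\left(1 - \frac{(2\beta+2k+1)^2}{u_b(r)^2}\right).$$

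The main step is the identity $\prod_{k=0}^{b-1}G_\beta(\ve_k) = \beta^{-2}$. To prove it, note from \eqref{b1}--\eqref{b2} that $q_m\beta \equiv (-1)^m\beta^{m+1} \pmod 1$ and that $q_n\sqrt{b^2+4}\,\beta^{n+1} \to 1$; hence the perturbation $(-1)^n\ve_k/q_n$ in the definition of $P_{q_n}(\beta,\ve_k)$ coincides with $(kq_n-q_{n+1})\beta$ modulo $1$, up to an additive error of size $O(\beta^{3(n+1)})$. Consequently, $P_{q_n}(\beta,\ve_k)$ is asymptotically the product of $2|\sin\pi s\beta|$ over the $q_n$ consecutive positive integers $s \in [q_{n+1}-(k+1)q_n,\, q_{n+1}-kq_n-1]$. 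For $k=0, 1, \ldots, b-1$, these $b$ windows tile the interval $[q_{n-1},\, q_{n+1}-1]$ of length $bq_n = q_{n+1}-q_{n-1}$, so
$$\prod_{k=0}^{b-1} P_{q_n}(\beta,\ve_k) = \frac{P_{q_{n+1}-1}(\beta)}{P_{q_{n-1}-1}(\beta)} \cdot (1+o(1)).$$
Passing to the limit and applying \eqref{cb2} at both indices $n+1$ and $n-1$ (so the factors $P_{q_{n\pm 1}-1}/q_{n\pm 1}$ cancel), together with $q_{n+1}/q_{n-1} \to \beta^{-2}$, yields $\prod_k G_\beta(\ve_k) = \beta^{-2}$.

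The conclusion is algebraic. Combining the three identities gives
$$\beta^{-2} = K_b^b \cdot \prod_{k=0}^{b-1}(\beta+k+1) \cdot \prod_{r \geq 1}\prod_{k=0}^{b-1}\left(1-\frac{(2\beta+2k+1)^2}{u_b(r)^2}\right).$$
The Pell-type identity $\beta(\beta+b) = 1$, equivalent to $\beta^2+b\beta-1 = 0$ from \eqref{eq: explicit form of beta}, gives $\prod_{k=0}^{b-1}(\beta+k+1) = \prod_{k=0}^{b-1}(\beta+k)/\beta^2$, which cancels the factor $\beta^{-2}$. Solving for $K_b^b$, multiplying by $\prod_r(1-1/u_b(r)^2)^b$ to obtain $C_b^b$, and reindexing $j = b-k$ produces exactly the formula in Theorem \ref{cbvaluethm}.

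The main obstacle is a rigorous justification of the ``$(1+o(1))$'' in the tiling step above. Some factors $|\sin\pi s\beta|$ are as small as $\pi\beta^{n+1}$ (attained near $s = q_n$), so naive termwise comparison is not automatic. However, the additive discrepancy between the two shifts is only $O(\beta^{3(n+1)})$, giving relative error at most $O(\beta^{2(n+1)})$ at the smallest factor, and compounding this across $q_n = O(\beta^{-(n+1)})$ factors yields a total multiplicative error of size $O(\beta^{n+1}) \to 0$. The uniform convergence clause of Theorem \ref{th6} on compact intervals where $G_\beta$ is nonzero --- noting that $G_\beta(\ve_k) > 0$ by the arithmetic check above --- absorbs these errors cleanly.
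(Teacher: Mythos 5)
Your proposal is correct, and while it follows the same overall blueprint as the paper — tile a window of $bq_n$ consecutive integers into $b$ blocks of length $q_n$, recognize each block as a perturbed Sudler product, and combine with a known limit of the total — the specific decomposition is genuinely different. The paper expands $P_{q_{n+1}}(\beta)/P_{q_{n-1}}(\beta) = \prod_{r=q_{n-1}+1}^{q_{n+1}} 2|\sin\pi r\beta|$ into forward blocks starting at $q_{n-1}$, obtaining the \emph{negative} limiting perturbations $\ve^{(j)}_n \to -(b+1-j+\beta)/\sqrt{b^2+4}$ and the relation $\prod_{j=1}^b G_\beta(\ve^{(j)}) = 1$ from $P_{q_{n+1}}/P_{q_{n-1}} \to 1$; it then eliminates the intermediate constant $B_b$ using the explicit formula \eqref{cbeq2} from \cite{sigrid2}. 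You instead tile $[q_{n-1}, q_{n+1}-1]$ into backward blocks hanging off $q_{n+1}$, land on the \emph{positive} perturbations $\ve_k = (\beta+k)/\sqrt{b^2+4}$, and invoke \eqref{cb2} (Theorem~A~2) to get $\prod_{k=0}^{b-1} G_\beta(\ve_k) = \lim q_{n+1}/q_{n-1} = \beta^{-2}$. One can check directly that the two identities are equivalent: applying the formula \eqref{limit_pert} at the paper's points versus yours gives $\prod_j G_\beta(\ve^{(j)}) = \beta^2 \prod_k G_\beta(\ve_k)$ by the Pell relation $\beta(\beta+b)=1$, which is precisely the $\beta^{-2}$ discrepancy. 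Your route has the small advantage of working purely with $G_\beta$ and the constant $K_b$ from Theorem \ref{th6} (identified via $G_\beta(0)=C_b$), rather than re-entering the $A_n B_n C_n$ factorisation and the constant $B_b$; the trade-off is that you need \eqref{cb2}, whereas the paper only needs $\lim P_{q_m}(\beta) = C_b$.

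One small remark on your ``main obstacle'' paragraph: the crude error-compounding estimate and the uniform-convergence argument are two separate justifications, and the second is the one that actually does the work cleanly. Since the exact shift $\ve_k^{(n)}$ satisfies $\ve_k^{(n)} - \ve_k = O(\beta^{2(n+1)})$, one eventually has $\ve_k^{(n)}$ in a fixed small compact neighbourhood of $\ve_k$ on which $G_\beta$ is nonzero, and uniform convergence on that neighbourhood plus continuity of $G_\beta$ gives $P_{q_n}(\beta,\ve_k^{(n)}) \to G_\beta(\ve_k)$. This is the same mechanism the paper uses implicitly (compare the treatment of $\zeta_n \to -\phi/\sqrt{5}$ in the proof of Proposition \ref{prop1}). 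Your back-of-the-envelope bound ``total multiplicative error $O(\beta^{n+1})$'' is a pessimistic over-estimate (it treats every factor as if it had the worst-case relative error), but since it still tends to zero it does no harm; just be aware that the uniform-convergence argument is the rigorous one.
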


\begin{cor} The limiting function $G_\be(\ve)$ in \eqref{limit_pert} satisfies
\begin{equation} \label{pinftyformula}
G_\be(\ve)^b = \frac{|\ve\sqrt{b^2+4}+1|^b }{ \beta \cdots(\beta+b-1)} \cdot \prod_{r=1}^{\infty}\prod_{j=1}^{b}\dfrac{ \left|1 - \dfrac{(2\ve\sqrt{b^2+4}+1)^2}{u_b(r)^2} \right|}{ \left|1 - \dfrac{(2b-2j+2\beta+1)^2}{u_b(r)^2} \right|} \cdot
\end{equation}	
\end{cor}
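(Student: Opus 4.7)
The plan is to derive the corollary as a direct consequence of Theorem \ref{th6} and Theorem \ref{cbvaluethm}, by pinning down the unknown constant $K_b$ appearing in the limit formula \eqref{limit_pert}. All the nontrivial analytic content (existence of the limit, convergence of the infinite product, identification of $C_b^b$) is already packaged into those earlier results; the corollary is a computation.

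First I would evaluate \eqref{limit_pert} at $\varepsilon=0$. By Theorem A $2$ (relation \eqref{cb}) we have $\lim_{n\to\infty}P_{q_n}(\beta)=C_b$, while by Theorem \ref{th6} applied at $\varepsilon=0$ we have $\lim_{n\to\infty}P_{q_n}(\beta,0)=G_\beta(0)$. Hence
\begin{equation*}
C_b \;=\; G_\beta(0) \;=\; K_b \cdot \prod_{r=1}^{\infty} \left|1-\frac{1}{u_b(r)^2}\right|.
\end{equation*}
Raising to the $b$-th power and comparing with Theorem \ref{cbvaluethm}, the factors $\prod_r (1-u_b(r)^{-2})^b$ cancel and one obtains
\begin{equation*}
K_b^{\,b} \;=\; \frac{1}{\beta(\beta+1)\cdots(\beta+b-1)} \, \prod_{r=1}^{\infty}\prod_{j=1}^{b}\left(1-\frac{(2b-2j+2\beta+1)^2}{u_b(r)^2}\right)^{\!-1}.
\end{equation*}

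Next I would take the $b$-th power of \eqref{limit_pert} for general $\varepsilon$ and substitute this value of $K_b^{\,b}$. Because the factor $\bigl|1-(2\varepsilon\sqrt{b^2+4}+1)^2/u_b(r)^2\bigr|$ does not depend on the dummy index $j$, the $b$-th power of the product over $r$ can be rewritten as a double product over $r$ and $j\in\{1,\dots,b\}$, which allows one to absorb it into the same bracket as the $j$-indexed factors coming from $K_b^{\,b}$. Combining everything yields exactly
\begin{equation*}
G_\beta(\varepsilon)^b \;=\; \frac{|\varepsilon\sqrt{b^2+4}+1|^b}{\beta(\beta+1)\cdots(\beta+b-1)} \cdot \prod_{r=1}^{\infty}\prod_{j=1}^{b} \frac{\bigl|1-(2\varepsilon\sqrt{b^2+4}+1)^2/u_b(r)^2\bigr|}{\bigl|1-(2b-2j+2\beta+1)^2/u_b(r)^2\bigr|},
\end{equation*}
which is \eqref{pinftyformula}.

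There is no real obstacle here beyond bookkeeping, since the heavy lifting sits in Theorems \ref{th6} and \ref{cbvaluethm}. The only point requiring a sentence of justification is that it is legitimate to evaluate the infinite-product formula for $G_\beta$ at $\varepsilon=0$ and equate it to $C_b$; this is fine because $C_b>0$ forces $G_\beta(0)\neq 0$, placing $\varepsilon=0$ in the regime where Theorem \ref{th6} yields the closed form and the pointwise limit $\lim_n P_{q_n}(\beta,0)$ coincides with $\lim_n P_{q_n}(\beta)=C_b$.
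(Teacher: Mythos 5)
Your proposal is correct and is exactly the argument the paper has in mind: the corollary is obtained by evaluating the limit formula \eqref{limit_pert} at $\varepsilon=0$, identifying $G_\beta(0)=C_b$, and using Theorem \ref{cbvaluethm} to eliminate the unknown constant $K_b$ after raising to the $b$-th power (this is the same device the paper spells out for the golden-ratio analogue, Corollary \ref{co_golden}). Your side remark that $C_b>0$ guarantees $\varepsilon=0$ lies in the nonvanishing regime is the right justification for this step, and the rewriting of $\prod_r |\cdot|^b$ as a double product over $r$ and $j$ is the only bookkeeping needed; nothing is missing.
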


The following proposition states the basic properties of the function $G_{\be}$ that we use later in the paper. We omit the proof since it involves precisely the same convergence arguments as the proof of Proposition \ref{prop1}.

\begin{prop} \label{gbetaproperties}
For any $b\geq 2$, the function $G_{\be}$ defined in Theorem \ref{th6} is continuous on $\mathbb{R}$. Furthermore, it is a $C^{\infty}$-function and strictly log-concave in any interval with endpoints two of its consecutive roots. \vspace{1mm}
\end{prop}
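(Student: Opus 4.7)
The plan is to adapt, essentially verbatim, the proof of Proposition \ref{prop1} to this more general setting. Set $y(\ve) = 2\ve\sqrt{b^2+4}+1$, so that the first factor of $G_\be$ equals $|y(\ve)+1|/2$ and the $r$-th factor of the product equals $|1 - y(\ve)^2/u_b(r)^2|$. The first goal is continuity on $\mathbb{R}$. From \eqref{ubr} and the boundedness of $\{r\be\}-\tfrac12$ one has $u_b(r) = 2\sqrt{b^2+4}\,r + O(1)$, so that for $\ve$ in any fixed compact set $I \subset \mathbb{R}$,
$$\left|1 - \frac{y(\ve)^2}{u_b(r)^2}\right| = 1 + O_I(r^{-2})$$
uniformly in $\ve \in I$. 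Standard estimates for infinite products then yield uniform convergence on $I$, and therefore continuity of $G_\be$ on $\mathbb{R}$.

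For the $C^\infty$ and log-concavity assertions, fix an open interval $J$ whose endpoints are two consecutive roots of $G_\be$. On $J$ every factor of the defining product has a constant sign, so up to a global sign one may drop the absolute value bars, and using the identity $1 - y^2/u_b(r)^2 = (u_b(r)-y)(u_b(r)+y)/u_b(r)^2$ one writes
$$\log G_\be(\ve) = c + \log|y(\ve)+1| + \sum_{r=1}^{\infty} \Bigl( \log|u_b(r)-y(\ve)| + \log|u_b(r)+y(\ve)| - 2\log u_b(r) \Bigr)$$
for a constant $c$ depending only on $K_b$ and the overall sign. Differentiating termwise in $\ve$ and using $\dot y = 2\sqrt{b^2+4}$, the second derivative of each bracketed summand equals $-\dot y^2 /(u_b(r)-y)^2 - \dot y^2/(u_b(r)+y)^2$, while the boundary term $\log|y(\ve)+1|$ contributes $-\dot y^2 /(y(\ve)+1)^2$. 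All summands are strictly negative, and the uniform lower bound $|u_b(r) \pm y(\ve)| \geq c_J \, r$ on $J$ ensures that the series of second derivatives --- and similarly of any higher-order derivatives, whose summands decay like $r^{-k}$ for $k\geq 2$ --- converges uniformly on $J$. The first derivative is handled by noting that the two summands in each bracket combine to $2y/(y^2 - u_b(r)^2) = O(r^{-2})$, so that series is absolutely convergent as well. Termwise differentiation to every order is thereby justified, proving that $\log G_\be$ (hence $G_\be$) is $C^\infty$ on $J$ and that $(\log G_\be)''$ is a sum of strictly negative terms, yielding strict log-concavity.

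The only nontrivial analytic input beyond routine manipulations is the asymptotic $u_b(r)\asymp r$, which is precisely the input used in the proof of Proposition \ref{prop1}; this is what allows the authors to omit the proof. The main thing one has to watch out for is uniform convergence of the series obtained after termwise differentiation; the first-order series requires the cancellation between $1/(u_b(r)-y)$ and $1/(u_b(r)+y)$ mentioned above, whereas from the second derivative onwards the $r^{-2}$ pointwise decay of each summand makes uniform convergence on $J$ automatic.
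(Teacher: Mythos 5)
Your proposal is correct and follows essentially the same route the authors indicate — namely, transplanting the proof of Proposition \ref{prop1} verbatim with $u_b(r)$, $\sqrt{b^2+4}$ in place of $u(r)$, $\sqrt{5}$. In particular, you use the same two key facts the paper uses: $u_b(r)\asymp r$ (giving the $O(r^{-2})$ decay that drives uniform convergence of the log-series and of all its termwise derivatives of order $\geq 2$), and the cancellation between $1/(u_b(r)-y)$ and $1/(u_b(r)+y)$ for the first-derivative series; the explicit formula $(\log G_\beta)'' = -\dot y^2/(y+1)^2 - \sum_r \dot y^2\bigl((u_b(r)-y)^{-2}+(u_b(r)+y)^{-2}\bigr)$ is just the general-$b$ version of the displayed formula in the proof of Proposition \ref{prop1}, and its manifest strict negativity yields strict log-concavity exactly as in the paper. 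One small point of care (which the paper also handles by working on a compact subinterval $I$ on which $G\neq 0$): the uniform lower bound $|u_b(r)\pm y(\ve)|\gg_J r$ should be asserted on compact subsets of the open interval between consecutive roots rather than on that open interval itself, and then $C^\infty$ smoothness on the whole open interval follows by exhaustion; this is implicit in what you wrote and matches what the paper does.
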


Interestingly, the $\liminf$ result of Grepstad, Kaltenb\"ock and Neum\"uller and the $\limsup$ result of Theorem \ref{th3} cannot be extended to the general case of $b \geq 2$. Instead, it turns out that both results fail when $b$ is sufficiently large, and the proofs rely on the particular structure of the function $G_\be$ in an extremely delicate way. Theorem \ref{lineargrowththeorem} gives a full characterisation for this problem.\\

\begin{thm} \label{lineargrowththeorem}
Let $\be = [b,b,b,\ldots]$, where $b$ is a positive integer. Then the following holds.
\begin{itemize}
\item[(i)]  If $b \leq 5$, then $\liminf \limits_{N \to \infty} P_N(\be) > 0$ and $\limsup \limits_{N\to\infty}\dfrac{P_N(\be)}{N} < \infty.$ \vspace{1mm}
\item[(ii)] If $b \geq 6$, then $\liminf \limits_{N \to \infty} P_N(\be) = 0$ and $\limsup \limits_{N\to\infty}\dfrac{P_N(\be)}{N} = \infty.$
\end{itemize}
\end{thm}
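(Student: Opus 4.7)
The plan is to combine the limit-function framework for $G_\be$ (Theorem \ref{th6} together with Proposition \ref{gbetaproperties}) with the Ostrowski expansion of $N$ in the base $(q_n)$ of denominators of $\be$, plus a reflection principle analogous to \eqref{reflectionprinciple}. In this way both the liminf question and the limsup question reduce to deciding whether the values $G_\be(\varepsilon)$ that arise, as $\varepsilon$ ranges over an explicit ``perturbation interval'' $I_b$ dictated by $b$, are all $>1$ (for the positive part (i)) or whether one can find $\varepsilon^\ast\in I_b$ with $G_\be(\varepsilon^\ast)<1$ (for the negative part (ii)).

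First, for the liminf in (i), I would write any large $N$ in Ostrowski base as $N = c_{n_k} q_{n_k} + \cdots + c_{n_1} q_{n_1}$ with $c_{n_j}\in\{0,\dots,b\}$ and $n_{j+1}>n_j$, and decompose $P_N(\be)$ block by block in analogy to \eqref{zeckexpansion}, further subdividing each digit $c_{n_j}$ into $c_{n_j}$ pieces of length $q_{n_j}$. Each piece has the form of a perturbed Sudler product \eqref{pqndefinition}, with perturbation parameter $\varepsilon_j = (-1)^{n_j} q_{n_j}\{m_j \be\}$, where $m_j$ is a partial sum of higher-index Ostrowski contributions; using \eqref{b1}--\eqref{b2} one checks that, apart from finitely many indices at the tail end of the expansion, all such $\varepsilon_j$ lie in an explicit compact interval $I_b$. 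By Theorem \ref{th6}, each piece contributes approximately $G_\be(\varepsilon_j)$, so the problem reduces to verifying $\inf_{\varepsilon\in I_b} G_\be(\varepsilon) > 1$. By the strict log-concavity in Proposition \ref{gbetaproperties}, this infimum is attained at an endpoint of $I_b$, so it suffices to evaluate $G_\be$ at two explicit points via \eqref{pinftyformula}. For $b\leq 5$ these endpoint evaluations succeed; the tail blocks contribute a bounded factor which is harmless for a liminf statement.

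For the limsup in (i), I would apply the reflection principle: if $q_{n-1}\leq N+1 < q_n$, then
$$
P_N(\be) \;=\; \frac{P_{q_n-1}(\be)}{\prod_{r=1}^{q_n-N-1} 2|\sin \pi (r-q_n)\be|}.
$$
By \eqref{cb2} the numerator is $\asymp q_n \asymp N$, and the denominator is a Sudler product with an extra phase $-q_n\be$ of size $\|q_n\be\|\ll \be^{n+1}$, which is negligible. Decomposing the denominator by the Ostrowski expansion of $q_n-N-1$ produces perturbations in a different but equally explicit interval $I_b'$, and the argument again reduces, via log-concavity and two endpoint computations, to $\inf_{\varepsilon\in I_b'} G_\be(\varepsilon)>1$, which holds for $b\leq 5$.

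For part (ii), when $b\geq 6$ the endpoint computations reverse: one exhibits $\varepsilon^\ast\in I_b$ with $G_\be(\varepsilon^\ast)<1$, and analogously $\varepsilon^{\ast\ast}\in I_b'$ with $G_\be(\varepsilon^{\ast\ast})>1+\eta$ for some $\eta>0$. Choosing a sequence of $N$ whose Ostrowski expansion has arbitrarily many blocks of length $q_{n_j}$ whose induced perturbation is within a small neighbourhood of $\varepsilon^\ast$ (which is possible because, as the $n_j$ vary, the attainable perturbations become dense in $I_b$) forces the product of block contributions to $0$, yielding $\liminf P_N(\be)=0$; the mirror construction on the reflected side drives $P_N(\be)/N \to \infty$. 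The main obstacle I expect is establishing the \emph{sharpness} of the transition at $b=6$: one must estimate the infinite product in \eqref{pinftyformula} at the critical endpoints of $I_b$ and $I_b'$ tightly enough to separate $b=5$ (where all endpoint values exceed $1$) from $b=6$ (where at least one falls below $1$). I would handle this by truncating the product to a large finite index and bounding the tail via the asymptotic $u_b(r) \sim 2\sqrt{b^2+4}\,r$, which gives a rapidly convergent remainder whose size can be controlled explicitly, leaving a finite numerical check of the leading factors.
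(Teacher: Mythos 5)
The central step of your argument for part (i) — ``the problem reduces to verifying $\inf_{\varepsilon\in I_b} G_\be(\varepsilon) > 1$'' — is false for $b \in \{3,4,5\}$, and this is exactly the delicate point the paper has to work around. For $b=5$, take $N = 83 = 3\cdot 26 + 5$: the last Ostrowski block $\prod_{r=1}^{5} 2|\sin\pi(78+r)\be|$ is a perturbed product $P_5(\be,\ve)$ with $\ve \approx -0.107$, which is inside the attainable range, and $G_\be(-0.107) \approx 0.54 < 1$. So there is no way to conclude by log-concavity plus two endpoint checks that all blocks contribute a factor $>1$. The paper instead exploits a compensation mechanism: a large negative perturbation of this kind can only arise after an Ostrowski digit $c_i \geq 2$, and that same digit necessarily generates several blocks with large \emph{positive} perturbations (giving factors of $G_\be$ around $1.44$, $2.34$, $2.18$, etc.) that overcompensate the one small factor. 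Formalizing this requires a case analysis on the digit patterns $(c_{n+1-i}, c_{n-i})$ (and sometimes $c_{n+2-i}, c_{n+3-i}$), grouping each large digit's blocks with the subsequent small-index block, and checking numerically that each group's product exceeds an absolute constant. Your proposal skips this entirely, so as written it proves (i) only for $b\in\{1,2\}$.

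There is a second, smaller structural point. For part (ii) you treat $b\geq 6$ uniformly, but the paper has to split $b\geq 7$ from $b=6$. When $b\geq 7$ one has $C_b = G_\be(0) < 1$ (this is Corollary~\ref{cor: when M-V const greater one}), so by continuity $G_\be$ is $<1$ on a whole neighbourhood of $0$; one then picks a sequence of convergent denominators $m_1 < m_2 < \dots$ growing fast enough that the induced perturbations stay inside that neighbourhood, and the product decays geometrically. When $b=6$ one has $C_6 > 1$, so this neighbourhood argument fails; the paper then uses the \emph{specific} sequence $N_k = q_k + q_{k-1}+\dots + q_0$ and a careful computation showing the resulting $\ve_j^{(k)}$ land in $[-0.0257,-0.02]$, a range where $G_\be < 0.96$, together with a monotonicity argument based on log-concavity. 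Your density claim (``the attainable perturbations become dense in $I_b$'') is not used in the paper and would need its own justification; the paper avoids it by constructing explicit sequences whose perturbations are pinned into a fixed small interval. These are fixable, but the missing compensation argument in (i) is a genuine gap that would make the proof fail for $b \in \{3,4,5\}$.
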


The above theorem implies that \eqref{folkloreconj}
is not true for every irrational $\alpha$. In fact, 
we have a quantitative lower bound on 
the number of quadratic irrationals whose Sudler products 
have super-linear growth in $N$; here we are ordering 
the quadratic irrationals in a natural way --- by their naive height. 
For not interrupting the flow of presentation, 
we have recorded the relevant corollary, that is 
Corollary \ref{height corr}, right before the bibliography.\\

Sudler products for quadratic irrationals $\be=[b,b,b,\ldots ]$ were examined in \cite{sigrid3}, where it was proved that there exists some (finite) value $B_0$ such that $\liminf_{N \to \infty}P_N(\be)=0$ provided that $b \geq B_0$. The authors of \cite{sigrid3} showed that one can take $B_0 = e^{803}$, and, based on numerical calculations, they conjectured that actually $b=6$ should be the transition point where the $\liminf$ behaviour of $P_N(\be)$ changes. This conjecture is established in our theorem above. We note that the theorems cited and proved in this paper imply that
$$
\liminf_{N \to \infty} \frac{\log P_N(\be)}{\log N} = 0, \qquad \limsup_{N \to \infty} \frac{\log P_N(\be)}{\log N} = 1 \qquad \text{when \, $b \leq 5$},$$
and that modifying  the proof of Theorem \ref{lineargrowththeorem} we could also establish the slightly stronger conclusion that
$$ \liminf_{N \to \infty} \frac{\log P_N(\be)}{\log N} < 0, \qquad \limsup_{N \to \infty} \frac{\log P_N(\be)}{\log N} > 1 \qquad \text{when\, $b \geq 6$,} $$
instead of part (ii) of Theorem \ref{lineargrowththeorem}.\\

It turns out that for all $b \geq 7$ we have $C_b = G_\be(0) <1$ (see Corollary \ref{cor: when M-V const greater one} below). By continuity of $G_\beta$ this implies that $G_\beta(\ve)<1$ for all sufficiently small perturbations $\ve$, and we will see that this implies that $\liminf P_N(\be)=0$ as well as $\limsup P_N(\be)/N = \infty$ by a rather straightforward argument (see Lemma \ref{lemma>1} below). The case $b=6$ is special, since this is the only case where we have $C_b > 1$ but where 
$$
\liminf \limits_{N \to \infty} P_N(\be) = 0\qquad \text{and} \qquad \limsup \limits_{N\to\infty}\dfrac{P_N(\be)}{N} = \infty.
$$
We will need a separate proof for this case (Lemma \ref{lemma_b6} below). The case $b=1$ is the golden ratio case, for which the desired conclusions have been already established in Section \ref{golden_case} above. It remains to deal with the case when $b \in \{2, 3, 4, 5\}$. The analysis in these cases  is quite involved, and significantly more complex than in the case $b=1$. Roughly speaking, in the golden ratio case we could show that from the Ostrowski (Zeckendorff) representation there can arise no perturbations $\ve$ which lead to values of $G$ being smaller than $1$. This continues to be true when $b=2$, but in the case $3 \leq b \leq 5$ it can indeed happen that the Ostrowski representation leads to particular (negative) perturbations $\ve$ for which $G_\beta(\ve)<1$. However, we show that such problematic perturbations can only arise from very particular configurations of the Ostrowski representation, in such a way that contributions from an earlier stage compensate for the small factors, and the overall product still exceeds $1$. For a heuristic explanation of these effects see Figures $2-4$ below. 

\begin{figure}[h!]%
\centering
\subfigure[][]{%
\label{fig:ex3-a}%
\includegraphics[scale=0.5]{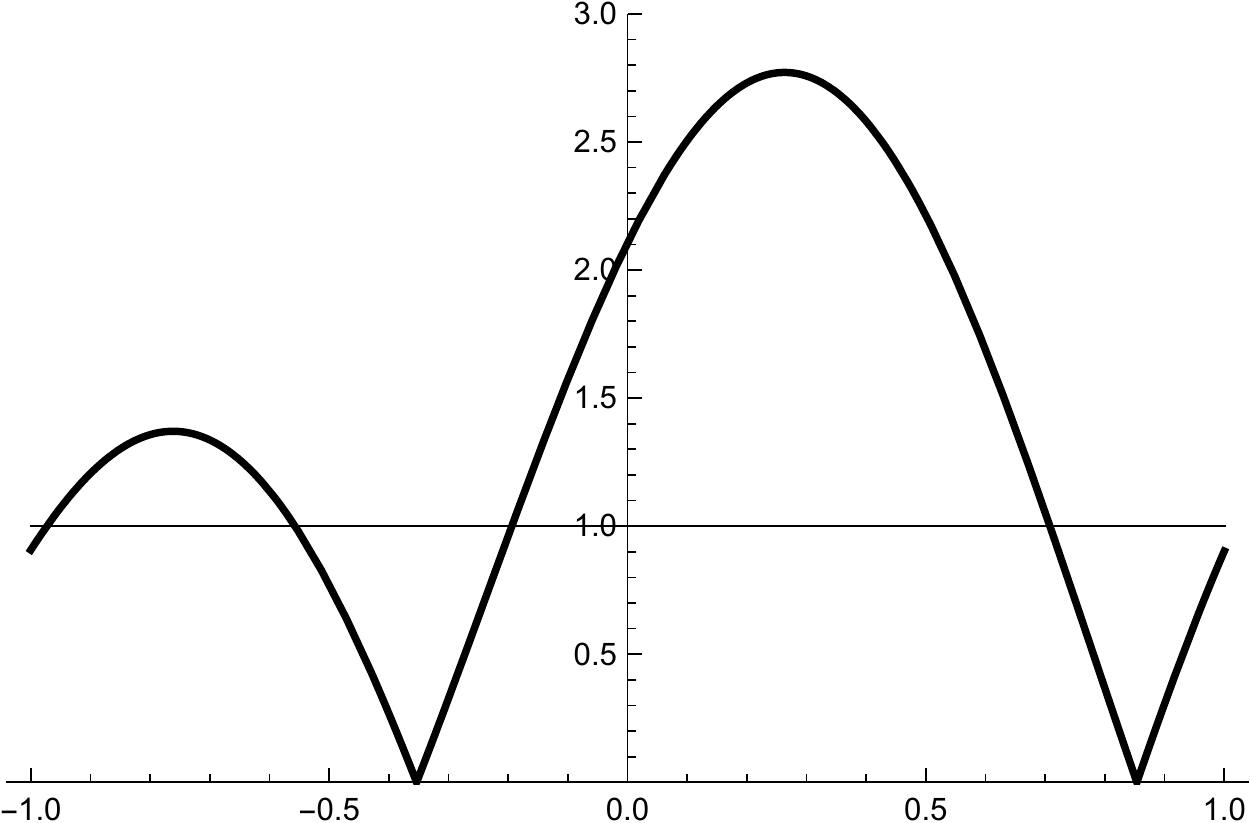}}% 
\hspace{8pt}%
\subfigure[][]{%
\label{fig:ex3-b}%
\includegraphics[scale=0.5]{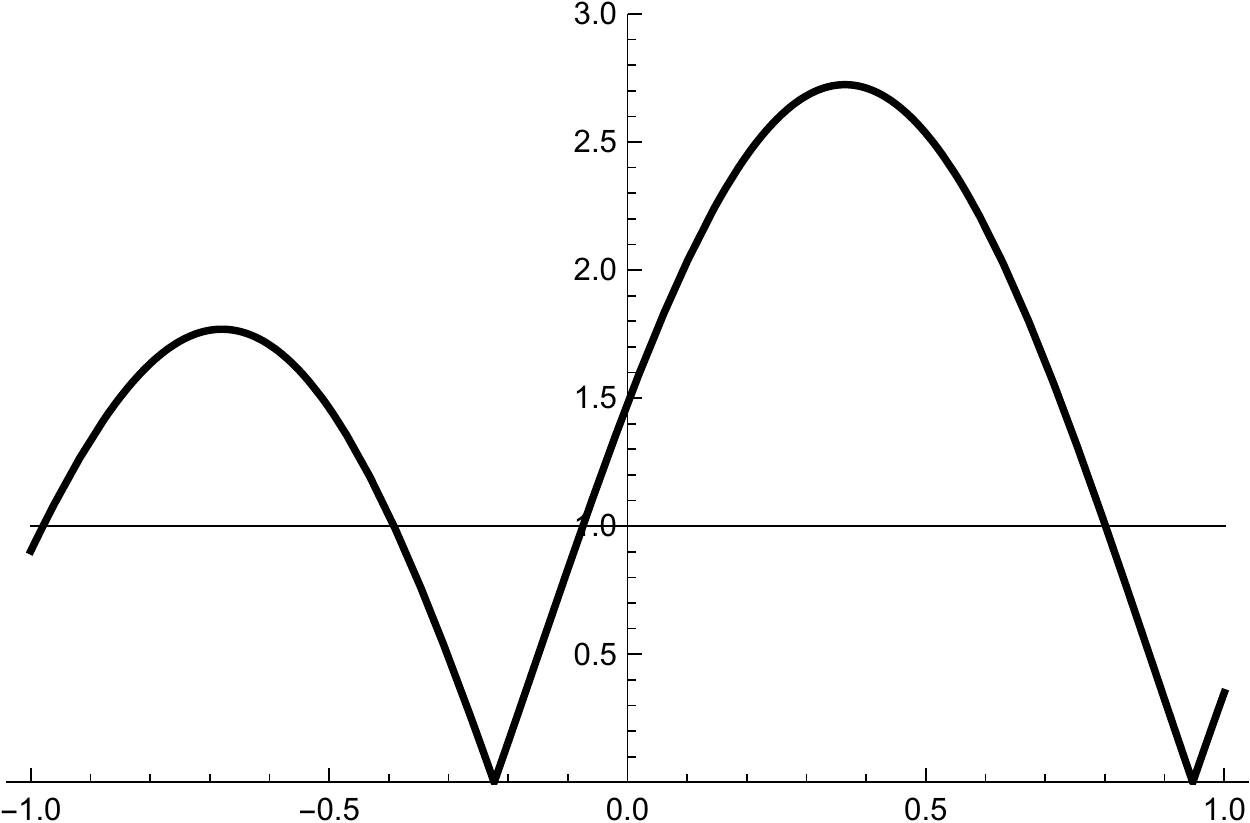}} \\
\subfigure[][]{%
\label{fig:ex3-c}%
\includegraphics[scale=0.5]{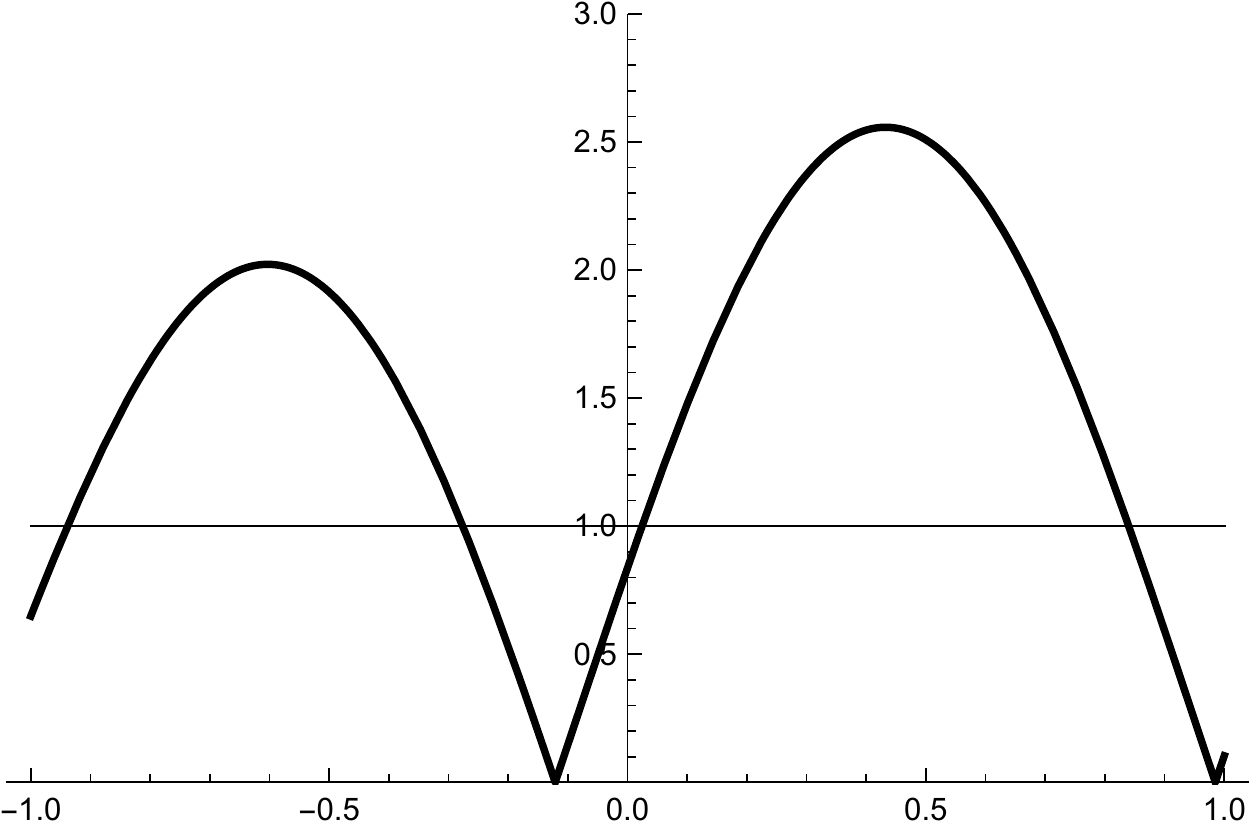}}%
\hspace{8pt}%
\subfigure[][]{%
\label{fig:ex3-d}%
\includegraphics[scale=0.5]{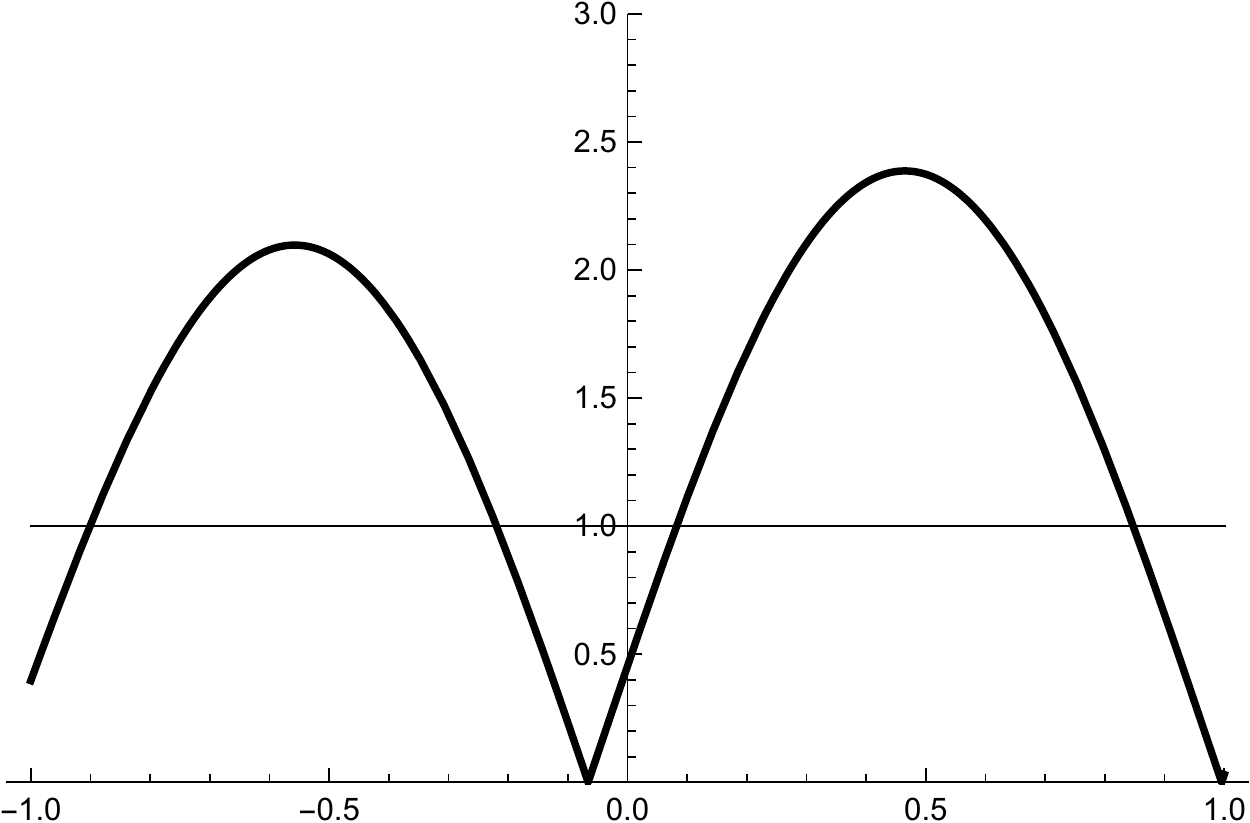}}%
\caption[A set of four subfigures.]{Plots of the limit functions $G_\beta(\ve)$ for different values of $\beta=[b,b,b,\ldots]$. In \subref{fig:ex3-a} we have $b=2$, in 
\subref{fig:ex3-b} we have $b=4$, in
\subref{fig:ex3-c} we have $b=8$, and in 
\subref{fig:ex3-d} we have $b=15$. Note how the peak of the function ``moves off'' to the right, so that for sufficiently large $b$ we have $C_b = G_\beta(0)<1$, which forces $\liminf P_N(\beta)$ to be 0 and $\limsup P_N(\beta)/N$ to be infinite (see Lemma \ref{lemma>1} below). In contrast, having $C_b = G_\be(0)>1$ is not sufficient to deduce that $\liminf P_N(\beta)=0$ and $\limsup P_N(\beta)/N<\infty$; in such a case, which can only happen when $b$ is small, detailed information on the size of $G_\beta$ throughout a certain range of possible perturbations $\ve$ is necessary.}%
\label{fig:ex3}%
\end{figure}

\begin{figure}[h!] \label{fig3}
        \includegraphics[scale=0.6]{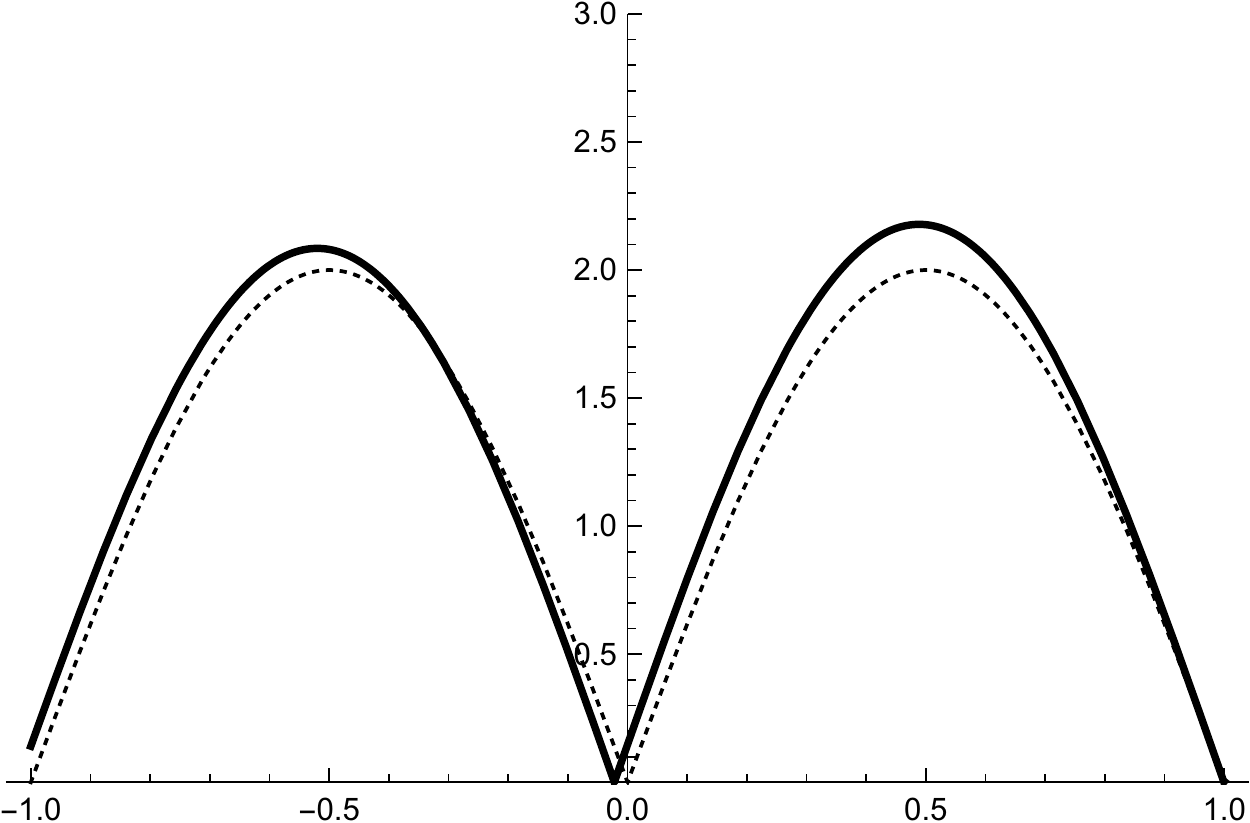}
        \caption{Plot of $G_\beta(\ve)$ for $b=35$ (solid line). Compare this with the plots in Figure 2. The peak has ``wandered off'' so far to the right that $G_\be$ only takes a very small value at $\ve=0$. The graph of $G_\be(\ve)$ has lost much of its similarity with the corresponding graphs for small values of $b$, and now rather resembles $2 |\sin \pi \ve|$ (dashed line).} 
    \end{figure}
    
\section{Proofs of the theorems for the golden ratio} \label{sec:proofs_phi}

Throughout this section, we will make use of the following relations:
\begin{equation}\label{phi1}
\phi = \frac{F_{n-1}}{F_n} + (-1)^{n+1}\frac{\phi^n}{F_n}, \,\hspace{5mm} n=1,2,\ldots
\end{equation}
and
\begin{equation} \label{phi2}
F_n = \frac{1}{\sqrt{5}} \left(\phi^{-n}-(-\phi)^n \right), \,\hspace{5mm} n=1,2,\ldots 
\end{equation}

\begin{proof}[Proof of Theorem \ref{th1}]
In order to show the convergence result of Theorem \ref{th1}, we follow the same factorisation argument as in \cite{mv} and \cite{gkn}. Here we present the basic steps of the proof and refer to \cite{gkn, mv} for the remaining details which are nearly identical. \\

In what follows, we assume that an integer $n\geq 1$ is given, and we write $[k]$ for the residue of the integer $k\in\mathbb{N}$ modulo $F_n$. Following \cite{mv} we define 
\begin{eqnarray*}
	s_n(0,\ve) &=& 2\sin\pi\left(\frac{\ve}{F_n}+\frac{\phi^{n}}{2} \right), \\
	s_n(r) &=&  2\sin\pi\left( \frac{r}{F_n}-\phi^{n}\left(\frac{[F_{n-1}r]}{F_n} -\frac{1}{2}\right)\right), \qquad r=1,2,\ldots, F_n-1.
\end{eqnarray*}
The product $P_{F_n}(\phi,\ve)$ may be further factorized as
\begin{equation} \label{factors} 
P_{F_n}(\phi,\ve) = A_n(\phi,\ve)\cdot B_n(\phi)\cdot C_n(\phi,\ve), \end{equation}
where   
\begin{eqnarray*}
	A_n(\phi,\ve) &=& 2 F_n\left| \sin \pi\left(F_n\phi + (-1)^{n+1}\frac{\ve}{F_n} \right)\right|,\\
	B_n(\phi) &=& \prod_{r=1}^{F_n-1} \frac{s_n(r)}{2\left| \sin(\pi r/F_n)\right|}, \hspace{3mm} \text{ and} \\
	C_n(\phi, \ve) &=& \prod_{r=1}^{F_n-1}\left( 1-\frac{s_n(0,\ve)^2}{s_n(r)^2} \right)^{\frac{1}{2}} .
\end{eqnarray*}
The proof of \eqref{factors} is  based on elementary trigonometric identities, see \cite[Lemma 5.1]{mv} or \cite[Lemma 3.1]{gkn}. Using properties \eqref{phi1} and \eqref{phi2} together with the asymptotic estimate $\sin x \sim x, \, x\to 0,$ we deduce that
\begin{eqnarray*}
A_n(\phi,\ve) & = & 2F_n \left|\sin\pi \left((-1)^{n+1}\phi^{n} +(-1)^{n+1}\frac{\ve}{F_n} \right) \right| \\
& = & 2F_n\left|\sin\pi\left( \phi^{n} + \frac{\ve}{F_n}\right)\right| \\
& \sim & 2\pi |F_n\phi^n +\ve| \\
&\sim & \frac{2\pi}{\sqrt{5}}|\ve\sqrt{5}+1|, \qquad n\to \infty .
\end{eqnarray*}

Regarding the products $B_n(\phi)$, it is shown in \cite{mv} that there exists a constant $B>0$ such that $B_n(\phi)\to B$ as $n\to \infty$. This is the most difficult part of the proof of Theorem A $1$ in \cite{mv}, but since the factor $B_n(\phi)$ does not depend on the perturbation $\ve$ at all we can just use this fact without any further work.\\

Finally, regarding the factor $C_n(\phi,\ve)$ we can show, arguing as in \cite[Section 6]{mv}, that 
$$ \lim_{n\to\infty}C_n(\phi,\ve)^2 = \prod_{r=1}^{\infty}\left(1-\frac{(2\ve\sqrt{5}+1)^2}{u(r)^2} \right)^2,  \hspace{3mm} \text{ for all } \ve\in\mathbb{R}, $$
where $u(r)$ is as in \eqref{ur}. The only difference in comparison with \cite{mv} is that the numerators $s_n(0,\ve)$ now depend on the perturbation $\ve$ and satisfy 
\begin{equation}\label{snestimate} 
s_n(0,\ve) = \pi\phi^n (2\ve\sqrt{5}+1)  + O(\phi^{3n})  \quad \text{ as } n\to \infty.
\end{equation}
Combining all the previous formulas, we obtain the requested convergence result for $P_{F_n}(\phi,\ve)$. \\

Now let $I$ be a compact interval.  In order to show that the convergence of $P_{F_n}(\phi,\ve)$ is uniform on $I$, it suffices to show that all three factors appearing in \eqref{factors} converge uniformly. This is trivial for $B_n(\phi)$, while for $A_n(\phi,\ve)$ it can be done using the estimate $\sin x = x + O(x^3),\, x\to 0$.  Finally regarding $C_n(\phi,\ve)$, the estimates \eqref{snestimate} as well as $(6.2)$ of \cite{mv} hold uniformly on $I$, and this allows us to deduce that $C_n(\phi,\ve)$ is uniformly Cauchy on $I$;  the details are left to the interested reader. 
 
\end{proof}

\begin{proof}[Proof of Theorem \ref{th2}]
Since by the Mestel--Vershueren Theorem the sequence $(P_{F_n}(\phi))_{n=1}^{\infty}$ has a limit $0<C_1 <\infty$, we get
\begin{eqnarray*}
1 &=& \lim_{n\to\infty} \frac{P_{F_{n+1}}(\phi)}{P_{F_{n-1}}(\phi)} \\
& = & \lim_{n\to\infty} \prod_{r=F_{n-1}+1}^{F_{n+1}}\!\!\!2|\sin \pi r\phi | \\
&=& \lim_{n\to\infty}\prod_{r=1}^{F_{n+1}-F_{n-1}}\hspace{-3mm}2| \sin\pi(F_{n-1}+r)\phi| \\
& = & \lim_{n\to\infty}\prod_{r=1}^{F_{n}}2| \sin\pi(r\phi + F_{n-1}\phi)| \\
& \stackrel{\eqref{phi1}}{=}& \lim_{n\to\infty}\prod_{r=1}^{F_{n}}2| \sin\pi \left(r\phi + (-1)^{n}\phi^{n-1} \right)| \\
& = & \lim_{n\to\infty}P_{F_n}(\phi, -F_n\phi^{n-1}) .
\end{eqnarray*}
Here \eqref{phi2} implies that 
\begin{equation} \label{asymptotic}
\ve_n := -F_{n}\phi^{n-1}  \sim -\frac{1}{\phi \sqrt{5}}, \qquad n\to \infty.
\end{equation}
At this point we can write 
\begin{equation} \label{factorisation}
P_{F_n}(\phi, \ve_n) = A_n(\phi,\ve_n)\cdot B_n(\phi) \cdot C_n(\phi, \ve_n),
\end{equation}  
where the factors appearing are as in \eqref{factors}. Now 
\begin{eqnarray*}
A_n(\phi, \ve_n) &=& 2F_n \left| \sin \pi\left(\phi^{n} -\phi^{n-1}\right)\right| \\
&\sim & 2\pi F_n \phi^{n}|1-\phi^{-1}| \\
&\sim & \frac{2\pi\phi }{\sqrt{5}} , \qquad n\to \infty.
\end{eqnarray*}
Regarding $B_n(\phi)$, we have already mentioned in the proof of Theorem \ref{th1} that there exists $B>0$ such that $$ \lim_{n\to\infty} B_n(\phi) = B.$$ Finally, for $C_n(\phi, \ve_n)$ one can employ the arguments of \cite[p.10--12]{mv} to prove that 
$$ \lim_{n\to\infty} C_n(\phi, \ve_n) = \prod_{r=1}^{\infty} \left( 1 - \frac{(1+2\phi)^2}{u(r)^2} \right).$$
Thus it follows by taking limits in \eqref{factorisation} that 
\begin{equation} \label{onelimit}
\frac{2\pi\phi}{\sqrt{5}} \cdot B \cdot \prod_{r=1}^{\infty} \left( 1 - \frac{(1+2\phi)^2}{u(r)^2} \right) = 1 .
\end{equation}
Regarding the constant $C_1>0$, in the proof of \cite[Theorem 3.1]{mv} it is actually shown that 
\begin{equation} \label{constantlimit}
C_1 = \frac{2\pi}{\sqrt{5}}\cdot B \cdot \prod_{r=1}^{\infty} \left( 1 - \frac{1}{u(r)^2} \right)
\end{equation}
Hence combining \eqref{onelimit} with \eqref{constantlimit} we obtain the requested relation.
\end{proof}

\begin{proof}[Proof of Theorem \ref{th3}]
To deduce Theorem \ref{th3} from Theorem \ref{th1}, let $N$ be given, and let $n=n(N)\geq 1$ be such that $F_{n-1} \leq N  + 1 < F_n$. Factorizing $P_N(\phi)$ as in \eqref{reflectionprinciple}, we obtain 
\begin{eqnarray}
	P_N(\phi) & = & \frac{P_{F_{n}-1}(\phi)}{\prod\limits_{r=N+1}^{F_{n}-1} 2 | \sin \pi r \phi |}   \, = \,   \frac{P_{F_n-1}(\phi)}{\prod\limits_{r=1}^{F_n-N-1} 2 | \sin \pi  (F_n - r) \phi |}  \nonumber\\ 
	& \stackrel{\eqref{phi1}}{=} &  \frac{P_{F_{n}-1}(\phi)}{\prod\limits_{r=1}^{F_n-N-1} 2|\sin\pi ( r \phi +(-\phi)^{n})|} \cdot \label{line_y}
	%&=& \frac{P_{F_n-1}(\phi)}{P_{F_n-N-1}(\phi,\ve)}
\end{eqnarray}
Now let $$ F_n - (N+1) = F_{n_k} + F_{n_{k-1}} + \ldots + F_{n_1}$$ be the Zeckendorff representation of $F_n-(N+1)$ as in \eqref{zeck}. Since $F_{n-1} \leq N +1 < F_n,$ we have $n_k \leq n-2$. Set $N_k=0$ and $N_i = F_{n_k} + \ldots + F_{n_{i+1}}$ for $i=1,2,\ldots, k-1$. The product in the denominator in line \eqref{line_y} can then be written as 
\begin{eqnarray}
	\prod\limits_{r=1}^{F_n-N-1} 2|\sin\pi ( r \phi +(-\phi)^{n})| &=& \prod_{i=1}^{k} \prod_{r=N_i+1}^{N_i + F_{n_i}}2|\sin \pi(r\phi + (-\phi)^{n})| \nonumber \\
	&=& \prod_{i=1}^{k} \prod_{r=1}^{F_{n_i}} 2|\sin\pi(r\phi + N_i\phi +(-\phi)^{n}) | \label{finitely}\\
	&=& \prod_{i=1}^{k} \prod_{r=1}^{F_{n_i}} 2\left|\sin\pi\left(r\phi + \frac{(-1)^{n_i+1} \ve_i}{F_{n_i}}\right)\right| \nonumber ,
\end{eqnarray}
where for $ i=1,2,\ldots, k$ we define $\ve_i$ so that 
$$ \frac{ (-1)^{n_i+1} \ve_i}{F_{n_i}} =   (N_i\phi + (-\phi)^n) \underbrace{- F_{n_{i+1}-1} - F_{n_{i+2}-1} - \dots - F_{n_{k-1} - 1} - F_{n_k - 1}}_{\in \mathbb{Z}}.
$$
The integer which is subtracted in the formula above is chosen in such a way that $\ve_i$ is small (see below). Note that subtracting this integer is possible without problems by the periodicity of the sine-function. Comparing with the definition of the perturbed Sudler products in \eqref{sudlereps} we note that we have
\begin{equation} \label{prodl}
\prod\limits_{r=1}^{F_n-N-1} 2|\sin\pi ( r \phi +(-\phi)^{n})| = \prod_{i=1}^{k}P_{F_{n_i}}(\phi, \ve_i).
\end{equation}
Furthermore, according to \eqref{phi1} we have
\begin{eqnarray*}
 \frac{\ve_i}{(-1)^{n_i+1}F_{n_i}}	& = & \left(N_i\phi + (-\phi)^{n}\right) - F_{n_{i+1}-1} - \ldots - F_{n_k - 1} \\
& = &  (F_{n_{i+1}}+\ldots + F_{n_{k}})\phi + (-\phi)^{n}  - F_{n_{i+1}-1} - \ldots -F_{n_k - 1} \\
& = & - \big((-\phi)^{n_{i+1}} + \ldots + (-\phi)^{n_{k}} - (-\phi)^n \big).
\end{eqnarray*}
Thus 
\begin{eqnarray}
%\ve_i &=& 	\{N_i\phi - (-\phi)^n  \} \\
\frac{\ve_i}{F_{n_i} \phi^{n_i}} & = & \phi^{-n_i} (-1)^{-n_i} \left( (-\phi)^{n_{i+1}} 
+ \ldots + (-\phi)^{n_{k}} 
- (-\phi)^n \right) \label{as_in_line}\\
& = & (-1)^{n_{i+1}-n_i} \phi^{n_{i+1}-n_i} + \ldots + (-1)^{n_k-n_i} \phi^{n_{k}-n_i} 
- (-1)^{n - n_i} \phi^{n-n_i} \label{pos_to}
\end{eqnarray}
for all $i$. In the last line, we (in general) have positive as well as negative summands. The positive terms are those coming from indices having the same parity mod $2$ as $n_i$, while the other terms give negative contributions. Note, however, that by the properties of the Zeckendorff representation (which cannot have two consecutive digits equal to $1$) we necessarily have $n_{i+1} - n_i \geq 2$; consequently, the maximal possible negative contribution is smaller than the maximal possible positive contribution, since $n_{i+1}$ and $n_i$ can only have different parity mod 2 if $n_{i+1} - n_i \geq 3$. This implies that the maximal possible positive contribution to \eqref{pos_to} is bounded above by
\begin{equation} \label{range_1}
\phi^2 + \phi^4 + \phi^6 + \dots = \phi,
\end{equation}
while the maximal possible negative contribution is bounded by
\begin{equation} \label{range_2}
-\phi^3 - \phi^5 - \phi^7 - \dots = - \phi^2.
\end{equation}

By \eqref{phi2} we have  $F_{n_i}\phi^{n_i} \sim \frac{1}{\sqrt{5}}$ as $n_i\to \infty$. Thus we have
$$
- \phi^2/\sqrt{5} - 0.001 \leq \ve_i \leq \phi / \sqrt{5} + 0.001,
$$
whenever $n_i \geq i_0$ (here and in the sequel we write $i_0$ for generic absolute lower bounds for elements of the index set, not necessarily the same at different occurrences). By $\phi^2/\sqrt{5} \approx 0.171$ and $\phi / \sqrt{5} \approx 0.276$ this implies that for $n_i \geq i_0$ we have
$$
- 0.18 \leq \ve_i \leq 0.28.
$$
Consequently, by Proposition \ref{prop1} we have $G(\ve_i)>1.01$ for all $n_i \geq i_0$, which implies that $P_{F_{n_i}}(\phi,\ve_i) \geq 1$ for $n_i \geq i_0$ (recall here that the convergence towards $G$ is uniform in $\ve$). Thus in the product on the right-hand side of \eqref{prodl} all factors are at least $1$, except for the contribution of a finite number of small indices $n_i \leq i_0$. \\

The contribution of the finitely many indices with $n_i \leq i_0$ to the full product, in the decomposition in line \eqref{finitely}, is
\begin{eqnarray}
& & \prod_{\substack{1 \leq i \leq k,\\n_i \leq i_0}} 
\prod_{r=1}^{F_{n_i}} 2|\sin\pi(r\phi + N_i\phi +(-\phi)^{n})|. \label{the_term}
\end{eqnarray}
Let $j = \max\{n_i:~n_i \leq i_0\}$, and assume that $j$ is odd (the even case is perfectly analogous). Also define $(\!(x)\!) = x$ for $x\in(-1/2,1/2]$ and extend to all $x\in\mathbb{R}$ with period $1$.  We can write the number $N_i + r$ as $y + z$, where the Zeckendorff representation of $y$ contains only Fibonacci numbers of size at least $F_{j+2}$, and where that of $z$ only contains Fibonacci numbers of size at most $F_j$ (note that $F_{j+1}$ cannot occur at all, since we know that $F_j$ does occur). By the best approximation properties of continued fraction convergents, we can easily deduce that $(\!( z \phi)\!) \not\in (-\phi^{j},\phi^{j+1})$, with the left and right endpoints of this interval corresponding to $z=F_j$ and $z=F_{j-1}$, respectively. Recall here that we assumed that $j$ is odd, so that $(\!(F_j z)\!)$ is positive and $(\!(F_{j-1} z)\!)$ is negative by \eqref{phi1}. On the other hand, using \eqref{phi1} and arguing as in the lines leading to \eqref{range_1} and \eqref{range_2}, we have
$$ (\!(y \phi + (-\phi)^n )\!)  \in  \left( - \sum_{\substack{\ell \geq j+2,\\ \ell \text{~even}}} \phi^{-\ell},~ \sum_{\substack{\ell \geq j+2,\\ \ell \text{~odd}}} \phi^{-\ell} \right) = \left( - \phi^{j+2}, \phi^{j+1} \right).
$$
Thus we have
$$ (\!( r\phi + N_i\phi +(-\phi)^{n} )\!)  = 
(\!( (y+z) \phi + (-\phi)^{n} )\!)  
\not\in \left(-\phi^{j} + \phi^{j+1}, \phi^{j+1} - \phi^{j+2} \right).
$$
Noting that trivially $j \leq i_0$, we deduce that for every individual factor appearing in one of the products in \eqref{the_term}, the term 
$|\sin\pi(r\phi + N_i\phi +(-\phi)^{n})|$ is bounded below by an absolute constant (depending only on $i_0$). Since the product \eqref{the_term} contains a bounded number of factors, we can deduce that this product is bounded below by an absolute constant.\\

Thus we can conclude that there exists an absolute constant $K>0$ such that 
$$ 
\liminf_{N\to\infty}\hspace{-1mm}\prod_{r=1}^{F_n-N-1}
\hspace{-3mm}2|\sin\pi ( r \phi + (-\phi)^{n})| \geq K.  
$$
Finally we deduce 
$$ \limsup_{N\to\infty}\frac{P_N(\phi)}{N} = \limsup_{N\to\infty} \frac{F_n}{N}\cdot \frac{P_{F_n-1}(\phi)}{F_n}\cdot 
\left(  \prod_{r=1}^{F_n-N-1}\hspace{-3mm}2|\sin\pi ( r \phi + (-\phi)^{n})| \right)^{-1} <   \infty.  $$
\end{proof}
%SECOND PART HERE
For illustration, we show that a proof for the fact that $\liminf P_N(\phi)>0$ can be obtained in a way which is completely analogous to the one above, just without the ``reflection'' at $F_n$. Let $N$ be given. We expand $N$ into its Zeckendorff representation
 $$
 N = F_{n_k} + F_{n_{k-1}} + \ldots + F_{n_1}.
 $$
 With $N_i$ defined as in the proof of Theorem \ref{th3}, we have
 \begin{eqnarray*}
 	P_N(\phi) & = & \prod_{r=1}^N 2 |\sin \pi r \phi| \\
 	& = & \prod_{i=1}^k \prod_{r=1}^{F_{n_i}} 2 |\sin\pi (r \phi + N_i \phi)|.
 \end{eqnarray*}
 This is very similar to line \eqref{as_in_line}, except that the term $\phi^n$ is missing. We can write $P_N(\beta) = \prod_{i=1}^k P_{F_{n_i}} (\phi,\ve_i)$ for some appropriate perturbations $\ve_i$. Since the term $\phi^n$ is now missing, the perturbations $\ve_i$ are slightly different. However, the possible \emph{range} for these perturbations is exactly the same as above, since $\phi^n$ is just one term of a geometric progression, and we have estimated the possible range of perturbations by considering the whole infinite geometric progressions -- cf.\ \eqref{range_1} and \eqref{range_2}. So we obtain the same range of perturbations as previously, i.e.\ $\ve_i \in [-0.18,0.28]$ for all $i$, and throughout this range of possible perturbations the function $G$ is uniformly bounded below by 1.01 (see above). This allows us to deduce that $\liminf\limits_{N \to \infty} P_N(\phi) > 0$.

 \begin{proof}[Proof of Proposition \ref{prop1}]
 	
 	As in the proof of Theorem \ref{th2}, we write 
 	\begin{eqnarray*} 
 		1 &=& \lim_{n\to\infty} \frac{P_{F_{n+2}}(\phi)}{P_{F_{n+1}}(\phi)} = \lim_{n\to\infty} \prod_{r=F_{n+1}+1}^{F_{n+2}}\!\!\!2|\sin \pi r\phi |  =  \lim_{n\to\infty}\hspace{-3mm}\prod_{r=1}^{F_{n+2}-F_{n+1}}\hspace{-3mm}2|\sin\pi(r+ F_{n+1})\phi|   \\
 		&\stackrel{\eqref{phi1}}{=}& \lim_{n\to\infty}\prod_{r=1}^{F_{n}}2| \sin\pi(r\phi + (-1)^{n}\phi^{n+1})| = \lim_{n\to\infty}P_{F_n}(\phi, -F_n\phi^{n+1}) .		
 	\end{eqnarray*}
 	Here for the sequence $\zeta_n := -F_n\phi^{n+1}, n=1,2,\ldots$ we have $\lim \zeta_n = \zeta_0 := -\phi/\sqrt{5}$ by \eqref{phi2}, and since the functions $P_{F_n}(\phi,\ve)$ converge uniformly to $G(\ve)$ we deduce that 
 	$$  1 = \lim_{n\to \infty}P_{F_n}(\phi,\zeta_n) = G\left( -\phi/\sqrt{5} \right)  \, . $$
 
 	To prove the remaining assertions of the theorem, we let $I \subseteq \mathbb{R}$ be a closed interval such that $G(\ve) \neq 0$ for all $\ve \in I$. Note that $\log G$ is well defined in the set of $\ve$ for which $G(\ve)\neq 0$. In view of \eqref{gformula}, to show that $G$ is a $C^{\infty}$ function on $I$, it is enough to establish the uniform convergence of
 	\[ H_{R}\left(\varepsilon\right)=\log\prod_{r\leq R}
 	\vert 1-\delta_{r}^{2}(\ve) \vert, \qquad \mathrm{where}\quad
 	\delta_{r}(\ve)= \frac{2\sqrt{5}\varepsilon+1}{u(r)} \,\, \text{ for any } \varepsilon\in I  \]
 	and its derivatives of any order as $R\to \infty$. First, we note that since $I$ is compact, the distance  $ \min\{ \left|1- \delta_{r}(\ve) \right| : \ve\in I, r\geq 1\}$ is strictly positive. Furthermore, the estimate 
 	\[  0\geq\log\left|1-\delta\right|
 	= -\int_{1-\delta}^{1}\frac{\mathrm{d}x}{x}\geq
 	-\int_{1-\delta}^{1}\frac{\mathrm{d}x}{1-\delta}
 	=-\frac{\delta}{1-\delta} \, , \hspace{5mm} 0< \delta < 1
 	\]
 	implies  that for $r$ large enough, 
 	\begin{equation*}
 	\vert \log\vert 1-\delta_{r}^{2}(\ve) \vert \vert  \leq \frac{\delta_r^2(\ve)}{1-\delta_r^2(\ve)} \ll_{I} \frac{(2\sqrt{5}\varepsilon+1)^{2}}{ u(r)^{2}}
 	\ll_{\, I} \, \frac{1}{r^{2}} \cdot  \label{eq: supnorm bound G}
 	\end{equation*}
 	Thus, the partial sums $ H_R(\ve)=\sum\limits_{r\leq R} \log\vert 1-\delta_{r}^{2}(\ve) \vert $ 
 	converge uniformly to
 	\begin{equation*}
 	H(\ve)  =  \sum_{r=1}^{\infty} \log\vert 1-\delta_{r}^{2}(\ve) \vert, \hspace{5mm} \ve\in I .
 	\end{equation*}
 	\par We proceed to show that the derivatives $H_R^{(k)}(\ve)$ of any order $k\geq 1$ converge uniformly, too.  For $k\geq 1$, the $k$-th derivative of each summand of $H_R(\ve)$ is 
 	\begin{align*}
 	\frac{\mathrm{d}^{k}}{\mathrm{d}\varepsilon^{k}}
 	\log\vert 1- \delta_{r}^{2}(\ve)\vert 
 	& =  \frac{(-1)^{k-1} (2\sqrt{5})^k (k-1)! }{u(r)^k} 
 	\Bigg(  \frac{1}{\left( 1+ \delta_{r}(\ve)   \right)^k}  + \frac{(-1)^k}{\left( 1-  \delta_{r}(\ve)   \right)^k}    \Bigg).
 	\end{align*} For the case $k=1$, we infer that
 	\[
 	\frac{\mathrm{d}}{\mathrm{d}\varepsilon}
 	\log \vert 1 - \delta_{r}^{2}(\ve) \vert \, =\,  \frac{4\sqrt{5}(2\sqrt{5}\varepsilon+1)}{u\left(r\right)^2 (1-\delta_r^2(\ve)) } \,
 	\ll_I \,  \frac{1 }{u\left(r\right)^{2}}\, \ll \,  \frac{1}{r^{2}} \, ,
 	\]
 	which is summable over $r$.
 	Now we observe that for each $k\geq 2$ fixed, 
 	\[ \left| u(r)^{k}\left(1\pm \delta_{r}(\ve) 
 	\right)^{k}\right|\, \gg_I \,  u(r)^{k}
 	\, \gg \,  r^{k} ,
 	\]
 	and this implies that the $k$-th derivative of $H_R$
 	converges uniformly on $I$.  The upshot is that, for
 	any $k\geq1$, the partial sums of
 	\[  \sum_{r=1}^{\infty}\frac{\mathrm{d}^{k}}
 	{\mathrm{d}\varepsilon^{k}}\log \vert 1 - \delta_{r}^{2}(\ve) \vert 
 	\]
 	converge uniformly on $I$ to $H^{(k)}$. It follows that $G$ is $C^{\infty}$ in any interval where it is nonzero, and by its definition we deduce that it is also continuous on $\mathbb{R}$.
 	\par Having established that $G\left(\varepsilon\right)$ is $C^{\infty}$,
 	except in the discrete set of its roots, makes demonstrating
 	the concavity assertion now easier. Indeed, it suffices to
 	argue that the second derivative of $\log G\left(\varepsilon\right)$ is
 	strictly negative.  In the previous part of the proof, we have actually shown that
 	\begin{equation} \label{logG}
 	\log G(\varepsilon) = \log |\sqrt{5}\varepsilon+1| + H(\varepsilon) + A,
 	\end{equation} 
 	where $A$ is an absolute constant. Hence, the second derivative of $\log G(\varepsilon)$ is 
 	$$- 5 (\sqrt{5}\ve +1)^{-2} -20 \sum_{r=1}^{\infty}(\vert u(r)
 	-2\sqrt{5}\varepsilon-1 \vert ^{-2}
 	+ \vert u(r)+2\sqrt{5}\varepsilon+1 \vert ^{-2})\, <\, 0,  $$
 	which implies the log-concavity of $G$ on the intervals under consideration.

 	Recall that $G(-\phi/\sqrt{5})=1$, and note that $-\phi/\sqrt{5} < -0.26$. For the values $\ve=-0.26$ and $\ve = 0.58$ we can prove that  $G(\ve)>1$ (or, equivalently, that $\log G(\ve)>0$). Indeed, we can explicitly estimate the error we make when we approximate the infinite series $H(\ve)$ in \eqref{logG} by a finite series -  the tail behavior of this infinite series is essentially the same as that of $\sum_{r=1}^{\infty} 1/r^2$. Accordingly, we can calculate that $G(-0.26) \in [1.09, 1.11]$  and that $G(0.58) \in [1.10, 1.12]$, and in particular that $G(-0.26)>1.01$ and $G(0.58)>1.01$. The log-concavity of $G$ implies that actually $G(\ve)>1.01$ holds throughout the whole range $\ve \in (-0.26,0.58)$.

 \end{proof}
 
 \section{Proofs of Theorems for the  quadratic irrationals $\be=[b,b,\ldots]$ }
 \begin{proof}[Proof of Theorem \ref{th6}]
 	As one might expect, the proof goes entirely along the lines of the proof of Theorem \ref{th1}. Given a fixed integer $n\geq 1$, we write $[k]$ for the residue of the integer $k\in\mathbb{N}$ modulo $q_n$. Define 
 	\begin{eqnarray*}
 		s_n(0,\ve) &=& 2\sin\pi\left(\frac{\ve}{q_n}+\frac{\be^{n+1}}{2} \right), \\
 		s_n(r) &=&  2\sin\pi\left( \frac{r}{q_n}+\be^{n+1}\left(\frac{[q_{n-1}r]}{q_n} -\frac{1}{2}\right)\right), \, r=1,2,\ldots, F_n-1.
 	\end{eqnarray*}
 	The product $P_{q_n}(\be,\ve)$ is further factorized as
 	\begin{equation} \label{bfactorisation} 
 	P_{q_n}(\be,\ve) = A_n(\be,\ve)\cdot B_n(\be)\cdot C_n(\be,\ve),
 	\end{equation}
 	where   
 	\begin{eqnarray*}
 		A_n(\be,\ve) &=& 2 q_n\left| \sin \pi\left(q_n\be + (-1)^n\frac{\ve}{q_n} \right)\right|,\\
 		B_n(\be) &=& \prod_{r=1}^{q_n-1} \frac{s_n(r)}{2\left| \sin(\pi r/q_n)\right|}, \hspace{3mm} \text{ and} \\
 		C_n(\be, \ve) &=& \prod_{r=1}^{q_n-1}\left( 1-\frac{s_n(0)^2}{s_n(r)^2} \right)^{\frac{1}{2}} .
 	\end{eqnarray*}
 	Using \eqref{b1}, \eqref{b2} together with the asymptotic estimate $\sin x \sim x, x\to 0$ we get
 	\begin{eqnarray*} 
 		A_n(\be,\ve) &=& 2q_n|\sin\pi((-1)^n\be^{n+1} +(-1)^n\frac{\ve}{q_n} )| \\ 
 		&=& 2q_n\left|\sin\pi\left( \be^{n+1} + \frac{\ve}{q_n}\right)\right|  \\
 		&\sim & 2\pi q_n \left|\be^{n+1} + \frac{\ve}{q_n} \right| \\
 		&\sim & 2\pi\left| \ve + \frac{1}{\sqrt{b^2+4}}\right|, \qquad n\to\infty .
 	\end{eqnarray*}
 	
 Regarding the products $B_n(\be)$, it is shown in \cite{sigrid2} that there exists a constant $B_b>0$ such that $B_n(\be)\to B_b$ as $n\to \infty$ and finally for the factor $C_n(\be,\ve)$ we can show arguing as in \cite{mv} that 
 	$$ \lim_{n\to\infty}C_n(\phi,\ve) = \prod_{r=1}^{\infty}\left(1-\frac{(2\ve\sqrt{b^2+4}+1)^2}{u_b(r)^2} \right)  \hspace{3mm} \text{ for all } \ve\in\mathbb{R}, $$
 where $u_b(r)$ was defined in \eqref{ubr}. Combining these facts for the asymptotic behavior of the factors in \eqref{bfactorisation}  we obtain the requested convergence result for $P_{q_n}(\be,\ve)$.  The fact that the convergence is uniform can be established as in the proof of Theorem \ref{th1}.
 \end{proof}
 
 \begin{proof}[Proof of Theorem \ref{cbvaluethm}]
 	We calculate
 	\begin{eqnarray*}
 		\frac{P_{q_{n+1}}(\be)}{P_{q_{n-1}}(\be)}	&=&\! \prod_{r=q_{n-1}+1}^{q_{n+1}}\!\!\!2|\sin \pi r \be| = \prod_{r=1}^{bq_n}2| \sin \pi(r+q_{n-1})\beta | \\
 		&\stackrel{\eqref{b1}}{=}& \prod_{r=1}^{bq_n} 2| \sin \pi(r\beta + (-1)^{n-1}\be^n)| \\
 		&=& \prod_{j=1}^{b}\prod_{r=(j-1)q_n+1}^{jq_n}\!\!2|\sin\pi(r\be -(-\be)^n)| \\
 		&=& \prod_{j=1}^{b}\prod_{r=1}^{q_n}2|\sin\pi(r\beta + (j-1)q_n\be -(-\be)^n)| \\
 		&\stackrel{\eqref{b1}}{=}& \prod_{j=1}^{b}\prod_{r=1}^{q_n}2|\sin \pi(r\be + (-1)^{n}(j-1)\be^{n+1} -(-\be)^n) |  \\
 		&=& \prod_{j=1}^{b}P_{q_n}(\be, \venj) ,
 	\end{eqnarray*}	
 	where for $j=1,2,\ldots,b$ we have set 
 	\begin{eqnarray*}
 		\venj & :=& q_n\be^n [(j-1)\be -1] \\
 		&=& q_n\be^{n+1} [(j-1)\be-1]\be^{-1} \\
 		& \sim & - \frac{b-j + \be +1}{\sqrt{b^2+4}},\hspace{4mm} n\to \infty.
 	\end{eqnarray*}	
 	At this point we may once again factorise 
 	$$ P_{q_n}(\be, \venj) = A_n(\be,\venj) \cdot B_n(\be) \cdot C_n(\be,\venj), \qquad j=1,2,\ldots,b   $$
 	where the factors are as in \eqref{bfactorisation}. Now
 	\begin{eqnarray*}
 		A_n(\be, \venj) &=& 2q_n| \sin \pi(q_n\beta + (-1)^n\frac{\venj}{q_n}  )|  \\
 		&=& 2q_n |\sin \pi(\be^{n+1} + \be^n [(j-1)\be -1] )  | \\
 		& \sim & 2\pi q_n \be^{n+1} (\be + b - j) \\
 		&\sim & \frac{2\pi (\be + b - j)}{\sqrt{b^2+4}}, \hspace{4mm} n\to \infty .
 	\end{eqnarray*}
 	As mentioned before,  it is shown in \cite{sigrid2} that there is $B_b>0$ such that $\lim\limits_{n\to\infty}B_b(\phi)=B_{b}$. \vspace{-1.2mm} \newline Finally regarding the third factor we notice that
 	\begin{eqnarray*}
 		s_n(0,\venj) &=& 2|\sin \pi\left(\frac{\venj}{q_n}+ \frac{\be^{n+1}}{2}  \right)|  \\
 		&=& 2| \sin \pi (\be^{n+1} [j-1-\be^{-1}] + \frac{\be^{n+1}}{2}) | \\
 		&\sim & \pi \be^{n+1} ( 2b- 2j +2\be +1), \hspace{4mm} n \to \infty
 	\end{eqnarray*}
 	and 
 	$$ s_n(r) \sim  2\pi \be^{n+1}\sqrt{b^2+4} \left( r - \frac{\{r\be\}-\frac{1}{2}}{\sqrt{b^2+4}}\right) = \pi \beta^{n+1}u_b(r)  , \hspace{3mm} \, n\to\infty $$
 	hence repeating once again the arguments utilised in \cite{mv, sigrid2} we deduce that
 	$$\lim_{n\to\infty} C_n(\be, \venj) = \prod_{r=1}^{\infty}\left(1 - \frac{(2b-2j+2\be+1)^2}{u_b(r)^2} \right)  $$
 	Combining, 
 	$$ \lim_{n\to\infty}P_{q_n}(\be, \venj) = \frac{2\pi(\be+b-j)}{\sqrt{b^2+4}} \cdot B_{(b)} \cdot  \prod_{r=1}^{\infty}\left(1 - \frac{(2b-2j+2\be+1)^2}{u_b(r)^2} \right) . $$
 	Now by Theorem A $2$ we may deduce 
 	$$ 1 = \lim_{n\to\infty} \frac{P_{q_{n+1}}(\be)}{P_{q_{n-1}}(\be)} = \lim_{n\to\infty}\prod_{j=1}^{b}P_{q_n}(\be, \venj), $$
 	whence 
 	\begin{equation} \label{cbeq1}
 	1 = \left( \frac{2\pi}{\sqrt{b^2+4}}  \right)^b \cdot \beta(\beta+1)\cdots (\beta + b-1) \cdot B_{b}^b \cdot \prod_{r=1}^{\infty} \prod_{j=1}^{b} \left(1 - \frac{(2b-2j +2\be +1)^2}{u_b(r)^2}  \right) \, .
 	\end{equation}
 	At this point we observe that in the proof of Theorem A $2$ in \cite{sigrid2} it is actually shown that the constant $C_b>0$ satisfies
 	\begin{equation} \label{cbeq2}
 	C_b = \frac{2\pi}{\sqrt{b^2+4}} \cdot B_{b} \cdot \prod_{r=1}^{\infty} \left( 1 - \frac{1}{u_b(r)^2}\right) .
 	\end{equation}
 	Combining \eqref{cbeq1} with \eqref{cbeq2} we deduce the value of $C_b$. 
 \end{proof}
 
 We now formulate and prove some complementary results that will be used later in the proof of Theorem \ref{lineargrowththeorem}.
 
 \begin{prop} \label{prop: MV constant for b at least 15} If $b\geq 11$, then $C_{b}<1$.
 \end{prop}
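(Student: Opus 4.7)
My plan is to bound $C_b$ from above by working directly with the closed-form expression in Theorem \ref{cbvaluethm}. The starting algebraic observation is that the minimal polynomial $\beta^{2}+b\beta-1=0$ yields $\sqrt{b^{2}+4}=b+2\beta$, and hence the clean formula $u_{b}(r)=2r(b+\beta)+1$ whenever $r\beta<1$, in particular for every $r\leq b$ (since $\beta<1/b$).

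The first main step is to isolate and evaluate the $r=1$ factor in closed form. With $u_{b}(1)=2b+2\beta+1$, a direct difference-of-squares calculation gives
$$
u_{b}(1)^{2}-1 \,=\, 4(b+\beta)(b+\beta+1),
\qquad
u_{b}(1)^{2}-(2b-2j+2\beta+1)^{2} \,=\, 4j\,(2b-j+2\beta+1),
$$
so the $r=1$ contribution to the infinite product in Theorem \ref{cbvaluethm} equals
$$
\prod_{j=1}^{b}\frac{(b+\beta)(b+\beta+1)}{j\,(2b-j+2\beta+1)}
\;=\;
\frac{\bigl[(b+\beta)(b+\beta+1)\bigr]^{b}}{b!\cdot(b+2\beta+1)_{b}},
$$
where $(x)_{b}=x(x+1)\cdots(x+b-1)$ denotes the Pochhammer symbol. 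Combined with the prefactor $1/(\beta)_{b}$ from Theorem \ref{cbvaluethm} and analysed via Stirling's formula (using $\beta<1/b$), this head of the product decays super-exponentially in $b$, of order $\frac{eb}{2\sqrt{2}\pi}\bigl(e^{3}/(4b)\bigr)^{b}$.

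The second main step is to upper-bound the tail $\prod_{r\geq 2}\prod_{j=1}^{b}[\cdots]$, every factor of which already exceeds $1$. For $2\leq r\leq b$, the same difference-of-squares trick keeps a clean Pochhammer structure: using $u_{b}(r)=2r(b+\beta)+1$ one obtains
$$
u_{b}(r)^{2}-(2b-2j+2\beta+1)^{2} \,=\, 4\bigl[(r-1)(b+\beta)+j\bigr]\bigl[(r+1)(b+\beta)+1-j\bigr],
$$
so the $r$-th factor can be written explicitly as a ratio of three Pochhammer symbols, which one bounds by $1+O(b/r^{2})$. For $r>b$, the crude estimates $u_{b}(r)\geq 2br$ and $|2b-2j+2\beta+1|\leq 2b$ show the $r$-th factor is $1+O(1/r^{2})$ uniformly in $b$. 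Summing the logarithms in $r$ gives a tail bound of the form $\exp(Cb)$ with an explicit constant $C$.

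Combining the two estimates, $C_{b}^{b}$ is dominated by $\bigl(e^{3+C}/(4b)\bigr)^{b}\cdot\mathrm{poly}(b)$, which is strictly less than $1$ once $b$ exceeds an explicit threshold $b_{0}$. The finite range $11\leq b\leq b_{0}$ is then dispatched by a direct numerical evaluation of the formula in Theorem \ref{cbvaluethm}. The main obstacle will be controlling the constant $C$ in the tail bound sharply enough, via the Pochhammer-symbol cancellations in the $2\leq r\leq b$ block rather than crude Taylor expansions, so that the threshold $b_{0}$ stays modest and the residual numerical verification at $b=11$ has a clear margin.
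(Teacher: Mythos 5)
Your proposal is correct in its broad strokes, and it takes a genuinely different route from the paper's, even though both begin from the same closed-form expression for $C_b^b$ in Theorem~\ref{cbvaluethm}. The paper applies one uniform chain of crude inequalities to the whole double product: it bounds the Pochhammer prefactor by $\beta \geq (b+b^{-1})^{-1}$ and $(\beta+1)\cdots(\beta+b-1)\geq (b-1)!$, rewrites each factor as $1+\frac{(2j+2\beta-1)^2-1}{u_b(r)^2-(2j+2\beta-1)^2}$, applies $1+x\leq e^x$, and replaces the denominator by the uniform lower bound $2(b^2+4)r^2$ to obtain the single compact estimate $C_b^b\leq \frac{b+b^{-1}}{(b-1)!}\exp\!\big(\frac{(2b^3+b+6b^{-1})\pi^2}{18(b^2+4)}\big)$, which is then shown to be decreasing and $<1$ at $b=11$. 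You instead exploit the exact linearisation $u_b(r)=2r(b+\beta)+1$ (valid for $r\leq b$, since then $\{r\beta\}=r\beta$), evaluate the $r=1$ block exactly as a ratio of Pochhammer symbols, isolate the super-exponential decay $\big(e^3/(4b)\big)^b$ via Stirling, and then estimate the $r\geq 2$ blocks separately using the clean factorisation $u_b(r)^2-(2b-2j+2\beta+1)^2=4[(r-1)(b+\beta)+j][(r+1)(b+\beta)+1-j]$. Your exact treatment of $r=1$ is not merely an aesthetic improvement: the paper's uniform bound $u_b(r)^2-(2b+2\beta-1)^2\geq 2(b^2+4)r^2$ actually fails at $r=1$ once $b\geq 4$, since $u_b(1)^2-(2b+2\beta-1)^2=8(b+\beta)$, which is far smaller than $2(b^2+4)$; isolating $r=1$ as you do sidesteps this issue entirely. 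Two small points to tidy up when you execute the plan: the claim that ``the $r$-th factor is $1+O(b/r^2)$'' should read $\exp(O(b/r^2))$, since it is a product over $j=1,\dots,b$ of factors each of size $1+O(1/r^2)$; and the plan is of course incomplete until the tail constant $C$ is pinned down explicitly, as you acknowledge --- the rough estimate $\sum_j[(2j-1+2\beta)^2-1]\approx\frac{4}{3}b^3$ divided by $u_b(r)^2-(2b+2\beta-1)^2\approx 4(r^2-1)(b+\beta)^2$ and summed over $r\geq 2$ suggests $C$ is comfortably below $1$, so the threshold $b_0$ should indeed be modest, but this needs to be made rigorous before the finite numerical verification can be scoped.
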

 \begin{proof}
 	We have $ \be \geq (b + b^{-1})^{-1}$ and $(\be + 1)\cdots (\be+ b-1) \geq  (b-1)!$. Thus by \eqref{cbvalue},\begin{eqnarray*}
 		C_b^b & \leq&  \frac{b + b^{-1}}{  (b-1)!} \prod_{r=1}^{\infty}\prod_{j=1}^{b} \left( 1-  \frac{1}{u_b(r)^2}\right)\left( 1 -  \frac{(2j +2\be -1)^2}{u_b(r)^2} \right)^{-1}  \\
 		& = &   \frac{b+b^{-1}}{ (b-1)!} \prod_{r=1}^{\infty}\prod_{j=1}^{b} \left( 1 + \frac{(2j+2\be -1)^2 -1 }{u_b(r)^2 - (2j + 2\be -1)^2 } \right) .
 	\end{eqnarray*}
 	Now using the estimates $1+x \leq e^x, x\in\mathbb{R}$ and  $u_b(r)^2-(2b+2\be-1)^2 \geq 2(b^2+4)r^2, r\geq 1$ we find
 	\begin{eqnarray*}
 		C_b^b & \leq &  \frac{b+ b^{-1}}{ (b-1)!} \prod_{r=1}^{\infty}\prod_{j=1}^{b} \exp\left(   \frac{(2j+2\be -1)^2 -1 }{u_b(r)^2 - (2j + 2\be -1)^2 } \right) \\
 		& \leq &   \frac{b+ b^{-1}}{ (b-1)!} \exp\left( \sum_{r=1}^{\infty}\sum_{j=1}^{b}   \frac{(2j+2\be -1)^2 -1 }{u_b(r)^2 - (2b + 2\be -1)^2 }   \right) \\
 		& \leq &  \frac{b+ b^{-1}}{ (b-1)!} \exp\left( \sum_{r=1}^{\infty}  \frac{ \frac{4}{3}b^3 + 4\be b^2 - (\frac{4}{3}-4\be^2 )b }{2(b^2+4)r^2 }  \right) \\
 		&\leq &   \frac{b+ b^{-1}}{ (b-1)!} \exp\left( \frac{(2b^3 + b + 6b^{-1} )\pi^2 }{18(b^2 + 4)}  \right) . 
 	\end{eqnarray*}
 	The right hand side is decreasing as a function of $b$ for $b\geq 11$, and we can verify that it is less than $1$ when $b=11$. This proves that $C_b<1$ for all $b\geq 11$. 
 \end{proof}

 \begin{cor}\label{cor: when M-V const greater one}
 	The constant $C_{b}$ exceeds $1$ if and only if $b\in \{1, 2, 3, 4, 5, 6\}$.
 \end{cor}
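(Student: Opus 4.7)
The plan is to combine Proposition \ref{prop: MV constant for b at least 15}, which already gives $C_b<1$ for every $b\geq 11$, with an explicit verification for the finite list $b\in\{1,2,\ldots,10\}$. The case $b=1$ is known from Theorem \ref{th2} and the approximate value $C_1\approx 2.407$ recorded after Theorem A 1, so only $b\in\{2,\ldots,10\}$ requires calculation, and one must establish $C_b>1$ for $b\in\{2,3,4,5,6\}$ and $C_b<1$ for $b\in\{7,8,9,10\}$.

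For each such $b$ I would use the product formula \eqref{cbvalue} of Theorem \ref{cbvaluethm} and truncate the outer product at some level $R$. The tail can be controlled exactly as in the proof of Proposition \ref{prop: MV constant for b at least 15}: each double factor is of the shape $1+O_b(1/r^2)$ as $r\to\infty$, so the contribution of the tail $r>R$ is at most $\exp(O_b(1/R))$, which tends to $1$ as $R\to\infty$. The ingredients $u_b(r)$ are computable from \eqref{ubr} via the fractional parts $\{r\beta(b)\}$, which are accessible to arbitrary precision from the purely periodic continued fraction expansion of $\beta(b)$ using \eqref{eq: explicit form of beta} and \eqref{b1}. This yields, for each individual $b$ in the finite list, a rigorous two-sided enclosure of $C_b$ whose length tends to $0$ as $R\to\infty$.

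The main obstacle is the pair of boundary cases: one needs $C_6>1$ and $C_7<1$, and both quantities are expected to be close to $1$ (consistent with the transition behaviour described in Theorem \ref{lineargrowththeorem} and visible in Figure 2, where the peak of $G_\beta$ is drifting away from the origin as $b$ grows). Consequently, $R$ must be chosen large enough that the rigorous enclosures for $b=6$ and $b=7$ lie strictly on the correct sides of $1$. For the remaining values $b\in\{2,3,4,5\}$ and $b\in\{8,9,10\}$ the quantity $C_b$ is visibly away from $1$, so rougher truncations are enough; in fact, for $b\in\{8,9,10\}$ one can alternatively try to sharpen the chain of estimates in the proof of Proposition \ref{prop: MV constant for b at least 15} so that its monotonicity-in-$b$ argument already covers these three cases, bypassing any separate numerical computation.
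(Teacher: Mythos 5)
Your proposal matches the paper's proof essentially verbatim: the paper also invokes Proposition \ref{prop: MV constant for b at least 15} to dispose of $b\geq 11$, then computes $C_b$ numerically for $1\leq b\leq 10$ from \eqref{cbvalue} with an explicit truncation-error bound for the infinite product, reporting the results in a table. Your extra remarks about the boundary cases $b=6,7$ and the possible sharpening of the Proposition for $b\in\{8,9,10\}$ are sound observations but not needed; the same approach and the same ingredients carry the proof.
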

 \begin{proof} By \eqref{cbvalue}, $\log C_b$ is given by an infinite series, and we can provide explicit estimates for the error of approximation of the series by a finite sum. 
 	By Proposition \ref{prop: MV constant for b at least 15}, in order to prove the corollary 
 	it remains to compute the value of $C_{b}$ for $1\leq b \leq 10$. These values are shown in the following table, with precision of $6$ decimal digits. \\
 	
 	\noindent\begin{minipage}[c]{1\columnwidth}%
 		\begin{center}
 			\begin{tabular}{|c|c|c|c|c|c|}
 				\hline 
 				$b$ & $1$ & $2$ & $3$ & $4$ & $5$ \tabularnewline
 				\hline 
 				$C_{b}$ & $2.406152$ & $2.159658$ & $1.800517$ 
 				& $1.499350$ & $1.267273$ \tabularnewline
 				\hline 
 				\hline 
 				$b$ & $6$ & $7$ & $8$ & $9$ & $10$ \tabularnewline
 				\hline 
 				$C_{b}$& $1.089429$ & $0.951175$ & $0.841663$ & $0.753296$ & $0.680773$ 
 				\tabularnewline
 				\hline 
 			\end{tabular}
 			\par\end{center}%
 	\end{minipage}\\~\\
 	The corollary is now proved.
 \end{proof}

 The proof of Theorem \ref{lineargrowththeorem} is given in several lemmas, which deal with different cases for the value of the integer $b$. We make use of the \emph{Ostrowski expansion} of integers with respect to some given irrational number, which we now briefly present. 
 Fix some irrational number $\alpha\in (0,1)$ with continued fraction expansion $\alpha=[a_1, a_2, \ldots]$ and let $(q_n)_{n=0}^{\infty}$ be 
 the corresponding sequence of denominators. 
 Every positive integer $N\geq 1$ can be written uniquely in the form 
 \begin{equation} \label{ostrowski}
 N = c_{n+1}q_n + c_{n}q_{n-1} + \ldots + c_2q_1 + c_1q_0
 \end{equation}
 where the integers $(c_i)_{i=1}^{n+1}$ are such that  
 \begin{enumerate}
 	\item[$1.$] $0\leq c_1 < a_1$  
 	\item[$2.$] $0\leq c_{i+1} \leq a_{i+1}$ \text{ for } $i=1,\ldots, n$   
 	\item[$3.$] $c_i=0$ \text{ whenever } $c_{i+1}=a_{i+1}$.  
 \end{enumerate}
 The expansion of $N$ as in \eqref{ostrowski} 
 is called the {\it Ostrowski expansion} of $N$ 
 with respect to $\alpha$. For more details we refer 
 to \cite[Chapter II]{rocket}. \\

 \begin{lemma} \label{lemma>1}
 	Let $\be$ be defined as in Theorem \ref{lineargrowththeorem}. If $b \geq 7$, then 
 	$$\liminf \limits_{N \to \infty} P_N(\be) = 0  \qquad \text{and} \qquad \limsup \limits_{N\to\infty}\dfrac{P_N(\be)}{N} = \infty. $$
 \end{lemma}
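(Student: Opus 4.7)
The approach exploits that, by Corollary \ref{cor: when M-V const greater one}, $G_\be(0) = C_b < 1$ for every $b \geq 7$. Proposition \ref{gbetaproperties} gives continuity of $G_\be$, so there are $\delta, \eta \in (0,1)$ with $G_\be(\ve) \leq 1 - \eta$ on $[-\delta, \delta]$. Since $G_\be$ is non-vanishing on this interval, Theorem \ref{th6} upgrades this to uniform convergence: one may pick an index $n_0$ such that $P_{q_n}(\be,\ve) \leq 1 - \eta/2$ for every $n \geq n_0$ and every $\ve \in [-\delta, \delta]$.

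First I would produce a sequence $N_k$ realising $\liminf P_N(\be)=0$. For each $k \geq 1$, choose $n(k) \geq n_0$ with $n(k) \to \infty$, and set
$$N_k := q_{n(k)} + q_{n(k)+2} + \cdots + q_{n(k)+2(k-1)},$$
which is a valid Ostrowski expansion (all non-zero digits equal $1$, separated by gaps of $2$). Splitting $P_{N_k}(\be)$ according to this decomposition, as in the derivation of \eqref{prodl}, gives $P_{N_k}(\be) = \prod_{i=1}^{k} P_{q_{n_i}}(\be, \ve_i^{(k)})$ with $n_i := n(k) + 2(i-1)$ and perturbations $\ve_i^{(k)}$ coming from $N_i^{(k)}\be \pmod 1$, where $N_i^{(k)} = \sum_{j>i} q_{n_j}$. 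Because all $n_j$ share a common parity, no sign cancellations occur; \eqref{b1} and \eqref{b2} then give
$$\ve_i^{(k)} = q_{n_i}\be^{n_i+1}\sum_{j=i+1}^{k}\be^{2(j-i)} + O(\be^{n(k)}),$$
whose absolute value is uniformly bounded by $\be^{2}/((1-\be^{2})\sqrt{b^{2}+4}) + o(1)$. The constraint $b \geq 7$ forces $\be \leq 1/7$, so this upper bound is well below $\delta$ once $n(k)$ is large; each factor is therefore at most $1 - \eta/2$, giving $P_{N_k}(\be) \leq (1 - \eta/2)^k \to 0$.

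To deduce $\limsup P_N(\be)/N = \infty$ I would reflect as in Theorem \ref{th3}. Pick $m(k)$ with $q_{m(k)-1} \leq N_k + 1 < q_{m(k)}$ and set $N'_k := q_{m(k)} - N_k - 1$; the $\be$-analogue of \eqref{reflectionprinciple} then reads
$$P_{N'_k}(\be) = \frac{P_{q_{m(k)}-1}(\be)}{\prod_{r=1}^{N_k} 2|\sin\pi(r - q_{m(k)})\be|}.$$
By \eqref{b1}, $-q_{m(k)}\be \pmod 1$ is of size $O(\be^{m(k)+1})$, so the denominator is the same product as in the previous paragraph with one additional exponentially small perturbation folded into every factor; the argument above still drives it to $0$. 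On the other hand Theorem A $\ref{generalthm}$ yields $P_{q_{m(k)}-1}(\be) \sim (C_b\sqrt{b^{2}+4}/(2\pi))\,q_{m(k)}$, and $N'_k < q_{m(k)}$, so $P_{N'_k}(\be)/N'_k \to \infty$. The sole real subtlety is uniform control of the $\ve_i^{(k)}$: spacing the summands in the Ostrowski expansion by $2$ keeps each perturbation of order $\be^{2}$ rather than $\be$, and the bound $\be \leq 1/7$ then pins them all inside the uniformly bad interval $[-\delta,\delta]$---the ingredient that fails once $b \leq 6$, which is why those cases require the separate arguments that follow.
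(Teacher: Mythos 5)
Your strategy is structurally the same as the paper's: exploit $C_b = G_\be(0) < 1$ for $b \geq 7$ (Corollary~\ref{cor: when M-V const greater one}), build a sequence $N_k$ whose Ostrowski/decomposition expresses $P_{N_k}(\be)$ as a product of many perturbed Sudler products $P_{q_n}(\be,\ve)$ with small $\ve$, conclude each factor is eventually $<1$, and reflect via \eqref{reflectionprinciple} for the $\limsup$. However, there is a genuine gap in your choice of $N_k$. You fix the gaps in the index to be exactly $2$, and compute that
$$
\ve_i^{(k)} \;=\; q_{n_i}\be^{n_i+1}\sum_{j=i+1}^{k}\be^{2(j-i)} + o(1),
$$
so $|\ve_i^{(k)}|$ converges (as $n(k) \to \infty$, $k \to \infty$) to the \emph{fixed} number $\be^{2}/\big((1-\be^{2})\sqrt{b^{2}+4}\big)$, which does not shrink no matter how large you take $n(k)$. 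You then assert this is ``well below $\delta$ once $n(k)$ is large,'' but that is not true: $n(k)$ large only kills the $o(1)$, not the main term. Meanwhile $\delta$ is an unquantified constant produced by continuity of $G_\be$ at $0$ — it depends on $b$, and you have no estimate comparing it with $\be^{2}/\big((1-\be^{2})\sqrt{b^{2}+4}\big)$. Your observation that $\be \le 1/7$ bounds the perturbation, but says nothing about $\delta$; in principle $\delta$ could decay faster with $b$.

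The paper avoids this entirely by choosing denominators $m_1 < m_2 < \cdots$ of convergents with \emph{freely chosen, arbitrarily large} gaps, imposing $m_k \ge 2 m_{k-1}$ and $\|m_k\be\| < \eta/(4m_{k-1})$. This forces every perturbation to satisfy $|\ve_j| \le \eta/2$ \emph{by construction}, with no quantitative knowledge of $\eta$ required. Your argument would be repaired by the same move: instead of a fixed gap $2$, use gaps of size $2L$ with $L$ chosen large enough that $\be^{2L}/\big((1-\be^{2L})\sqrt{b^2+4}\big) < \delta$ (or simply let the gaps grow, as the paper does). Once that is fixed, the rest of your argument — the $\liminf$ conclusion, the reflection for $\limsup$, and the observation that the extra perturbation $-q_{m(k)}\be$ is negligible — matches the paper's proof.
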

 \begin{proof} 
 	Let $b \geq 7$ be fixed, and let $\be$ be defined as in Theorem \ref{lineargrowththeorem}. The key fact in the proof will be that $C_b <1$. By the upper bound for $C_b$ given in the proof of Proposition \ref{prop: MV constant for b at least 15} and the table in Corollary \ref{cor: when M-V const greater one} we see that we actually have $C_b \leq 0.96$ for $b\geq 7$.  By Proposition \ref{gbetaproperties}, $G_\beta $ is continuous near $\ve=0$, so there exists $\eta>0$ small enough such that $G_\beta(\ve) \leq 0.98$ for $|\ve| \leq \eta$. \par Set $m_1=1$. We construct a sequence $m_1 < m_2 < m_3 < \dots$ of positive integers, such that:
 	\begin{enumerate}
 		\item[(i)] every $m_k$ is the denominator of a convergent to $\beta$, 
 		\item[(ii)] $m_k \geq 2 m_{k-1}$ for all $k \geq 2$, and 
 		\item[(iii)] $\|m_k \beta\| < \eta / (4 m_{k-1})$ for $k \geq 2$.
 	\end{enumerate} 
 	This construction is always possible by choosing $m_k$ sufficiently large  compared to $m_{k-1}$.\par
 	
 	Let $N_k = m_k + m_{k-1} + \dots + m_1, \, k\geq 1$. Then 
 	\begin{eqnarray}
 	P_{N_k}(\beta)\, & = \,& \prod_{r=1}^{N_k} 2|\sin \pi r\beta| \,\, = \,\, \prod_{j=1}^{k}\prod_{r=1}^{m_j}2|\sin \pi (M_j + r) \beta| \nonumber\\
 	& = \, & \prod_{j=1}^{k} \prod_{r=1}^{m_j}2 \left|\sin \pi\left(r \beta + \frac{\ve_j}{m_j}\right)\right| \,\, = \,\, \prod_{j=1}^{k}P_{m_j}(\beta,(-1)^j\ve_j) , \label{prod_nk}
 	\end{eqnarray}
 	where
 	\begin{equation} \label{mj}
 	M_k = 0 \qquad \text{and} \qquad M_j = m_k + m_{k-1} + \dots + m_{j+1}, \quad j=1,2,\ldots, k-1
 	\end{equation}
 	and where either 
 	$$
 	\ve_j =m_j \|  M_j \beta\| \qquad \text{or} \qquad \ve_j = -m_j \| M_j \beta\|.
 	$$
 	In any case we have
 	\begin{eqnarray}
 	|\ve_j| & = & m_j \|(m_{j+1} + m_{j+2} \dots + m_{k}) \beta\| \nonumber\\
 	& \leq & m_j (\|m_{j+1} \beta\| + \| m_{j+2} \beta \| + \dots + \|m_{k} \beta\|) \label{anycase}\\
 	& \leq & \frac{\eta}{4} \left( \frac{m_j}{m_j} + \frac{m_j}{m_{j+1}} + \dots + \frac{m_j}{m_{k}} \right) \nonumber\\ 
 	& \leq & \frac{\eta}{2}. \label{anycase2}
 	\end{eqnarray}
 	Thus we have $G_\beta(\ve_j) \leq 0.98$ for all $j$, and consequently  $P_{m_j}(\beta,\ve_j) \leq 0.99$ for all sufficiently large $j$. Accordingly, in \eqref{prod_nk} all factors (except for finitely many) are smaller than 0.99 and  the product tends to 0 as $k \to \infty$. This proves the first assertion of the lemma.\\
 	
 	For the second part, we set $N_k = m_{k+1} - 1 - m_k - m_{k-1} - \ldots - m_1,\, k\geq 1$. Then
 	\begin{equation} \label{pnkbeta}
 	P_{N_k}(\beta) = \frac{P_{m_{k+1}-1}(\beta)}{\prod\limits_{r=N_k+1}^{m_{k+1}-1}\!\!2|\sin \pi r \beta|} = \frac{P_{m_{k+1}-1}(\beta)}{\prod\limits_{j=1}^{k}\prod\limits_{r=1}^{m_j} 2|\sin \pi (-m_{k+1} + M_j + r) \beta|},
 	\end{equation}
 	with $M_j$ defined as in \eqref{mj}. By Theorem A 2 the numerator in \eqref{pnkbeta} satisfies
 	$$ P_{m_{k+1}-1}(\beta) \, \, \asymp \,\, m_{k+1} \, \asymp \, \,  N_k\, , \qquad k\to \infty,$$
 	and it remains to show that the denominator in \eqref{pnkbeta} tends to zero as $k \to \infty$. This can be done in exactly the same way as above, the only difference being that in \eqref{anycase} we now have an additional term $m_j \|m_{k+1} \beta\|$, which does not affect the validity of \eqref{anycase2}. Thus $\prod_{r=N_k+1}^{m_{k+1}-1} 2|\sin \pi r \beta| \to 0$ as $k \to \infty$, and $P_{N_k}(\beta) / N_k \to \infty$ as $k \to \infty$, which proves the second part of the lemma.
 \end{proof}
 
 \noindent The following lemma settles the case $b=6$ in Theorem \ref{lineargrowththeorem}.
 
 \begin{lemma} \label{lemma_b6}
 	Let $\be$ be defined as in Theorem \ref{lineargrowththeorem}, for $b=6$; that is, $\beta = \sqrt{10}-3$. Then 
 	$$\liminf \limits_{N \to \infty} P_N(\be) = 0 \qquad \text{and} \qquad \limsup \limits_{N\to\infty}\dfrac{P_N(\be)}{N} = \infty. $$
 \end{lemma}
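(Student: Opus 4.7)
The plan is to extend Lemma \ref{lemma>1} to the threshold case $b=6$. The main difficulty is that $C_6 = G_\be(0) > 1$ (see Corollary \ref{cor: when M-V const greater one}), so the simple ``all perturbations near $0$'' construction of Lemma \ref{lemma>1} no longer works. I will instead find a specific $\ve^* \neq 0$ at which $G_\be(\ve^*) < 1$, and construct a sequence $N_k$ whose Ostrowski block decomposition forces the perturbations at every scale to cluster near $\ve^*$.

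First I would locate the bad perturbation $\ve^*$. The proof of Theorem \ref{cbvaluethm} implicitly yields the identity $\prod_{j=1}^{6} G_\be(\ve^{(j)}) = 1$ with $\ve^{(j)} = -(7-j+\be)/\sqrt{40}$; the sixth of these, $\ve^{(6)} = -(1+\be)/\sqrt{40}$, lies at distance only $\be/\sqrt{40} \approx 0.026$ from the zero $-1/\sqrt{40}$ of $G_\be$, so the linear factor $|\ve^{(6)}\sqrt{40}+1|=\be$ in the product formula for $G_\be$ is already small. Combining this with a finite-truncation estimate of the infinite product, in the spirit of Proposition \ref{prop: MV constant for b at least 15}, I would verify $G_\be(\ve^{(6)}) \leq c_0$ for an explicit constant $c_0 < 1$ (a rough estimate based on dominating only the $r=1$ term suggests $c_0 \approx 0.2$), and set $\ve^* := \ve^{(6)}$.

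Next I would construct the sequence $N_k$. Choosing a rapidly increasing sequence $n_1 < n_2 < \dots$ of indices of the same parity, I would define $N_k$ through a carefully tuned Ostrowski expansion with respect to $\be$ so that the block factorisation of $P_{N_k}(\be)$ has, at scale $i$, a block of size $q_{n_i}$ whose perturbation $\ve_{k,i}$ converges to $\ve^*$ as $n_i \to \infty$. This is arranged by choosing the Ostrowski digits so that the cumulative tail shift at every scale mimics the specific shift $(q_{n_i-1} + 5 q_{n_i})\be$ appearing in the $j=6$ block of the decomposition from the proof of Theorem \ref{cbvaluethm}, up to an error that vanishes as the spacing between the $n_i$ grows. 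Since $\ve^*$ is bounded away from the zeros of $G_\be$, Theorem \ref{th6} delivers uniform convergence $P_{q_{n_i}}(\be, \ve_{k,i}) \to G_\be(\ve^*)$, so each block factor is at most $c_0 + o(1)$ and hence $P_{N_k}(\be) \leq (c_0 + o(1))^k \to 0$, proving $\liminf_{N \to \infty} P_N(\be) = 0$. The $\limsup$ half then follows from the reflection principle \eqref{reflectionprinciple}, applied with the convergents $q_n$ of $\be$ in place of the Fibonacci numbers, exactly as in the second half of Lemma \ref{lemma>1}: the dual sequence $N_k' = q_{n_{k+1}+1} - 1 - N_k$ satisfies $P_{q_{n_{k+1}+1}-1}(\be) \asymp q_{n_{k+1}+1} \asymp N_k'$ by Theorem A \ref{generalthm}, while the complementary product is (up to a small tail shift) the product analysed above and hence tends to $0$, so $P_{N_k'}(\be)/N_k' \to \infty$.

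The main obstacle is the explicit construction of $N_k$: in a general Ostrowski expansion the perturbation in each block depends on the whole tail of the expansion, so arranging for every block perturbation to converge simultaneously to the prescribed value $\ve^*$, rather than to some other accidental accumulation point with $G_\be \geq 1$, will require a delicate digit-by-digit arrangement and careful tail-error estimates. A secondary difficulty is verifying $G_\be(\ve^*) < 1$ quantitatively in the first step, since $\ve^*$ is close to a zero of $G_\be$ and the infinite product in the formula for $G_\be$ must be truncated with explicit error bounds.
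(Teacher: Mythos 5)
Your overall strategy --- locate a perturbation value where $G_\beta$ is small, then build a sequence $N_k$ whose block decomposition repeats that perturbation --- is the right general idea, but the specific target $\varepsilon^* = -(1+\beta)/\sqrt{40}$ is not reachable by any Ostrowski block decomposition, and this is a fatal gap. In a valid Ostrowski decomposition, a block of size $q_j$ carries a shift of the form $M = a q_j + c_{j+2}q_{j+1} + c_{j+3}q_{j+2} + \cdots$ with $a < c_{j+1}$; all denominators occurring in $M$ are $\geq q_j$. The resulting normalized perturbation is, up to a factor tending to $1/\sqrt{40}$, approximately $a + \sum_{\ell \geq 1} c_{j+1+\ell}(-\beta)^\ell$. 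Since $a \geq 0$, since the existence of a block of size $q_j$ forces $c_{j+1} \neq 0$ and hence $c_{j+2} \leq 5$, and since all digits are $\leq 6$ with the no-consecutive-maximal-digit constraint, this sum is bounded below by $-5\beta - 6\beta^3/(1-\beta^2) \approx -0.84$, so $\varepsilon \gtrsim -0.84/\sqrt{40} \approx -0.13$. This is strictly to the right of the zero of $G_\beta$ at $-1/\sqrt{40} \approx -0.158$, and even farther from your $\varepsilon^* \approx -0.184$, which lies on the \emph{other side} of that zero. The $j=6$ block of Theorem \ref{cbvaluethm} achieves $\varepsilon^{(6)}$ only because its shift $q_{n-1} + 5 q_n$ contains the \emph{smaller} denominator $q_{n-1}$; this never happens in an Ostrowski block decomposition of a single product $P_N(\beta)$, and arises in Theorem \ref{cbvaluethm} precisely because the ratio $P_{q_{n+1}}/P_{q_{n-1}}$ has divided out $P_{q_{n-1}}$. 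No amount of ``tuning'' the Ostrowski digits produces a sequence $N_k$ whose block perturbations accumulate at $\varepsilon^*$.

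The paper's proof of Lemma \ref{lemma_b6} is both simpler and more delicate than this. It uses the plain sequence $N_k = q_k + q_{k-1} + \cdots + q_0$ (all Ostrowski digits equal to $1$), for which every block perturbation $\varepsilon_j^{(k)}$ lies in the narrow window $[-0.0257,-0.02]$, and it then proves via log-concavity of $G_\beta$ and a two-point numerical estimate that $G_\beta \leq 0.96$ throughout that window. The key observation is not that $G_\beta$ becomes tiny at some exotic perturbation, but that it already dips (barely) below $1$ at the very mild negative perturbations that the simplest construction produces: $C_6 = G_\beta(0) \approx 1.089$ is only slightly above $1$ and $G_\beta$ crosses $1$ somewhere between $-0.02$ and $0$. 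This is exactly the feature that singles out $b=6$ from $b \leq 5$. The $\limsup$ half, via the reflection principle, is handled in the paper essentially as you describe.
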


 It was already noted in \cite{gkn} that $P_N(\beta)$ seems to be decreasing along a subsequence of indices $N$. In \cite{gkn} the following table concerning the evolution of minima of $P_N(\beta)$ is given:\\
 
 \begin{tabular}{|l|l|l|l|l|l|l|}
 	\hline
 	$N$ & 1 & 7 & 44 & 272 & 1677 & 10335 \\
 	\hline
 	$P_N(\beta)$ & 0.977 & 0.907 & 0.849 & 0.794 & 0.742 & 0.693\\
 	\hline
 \end{tabular}
 ~\\
 
 The denominators of continued fraction convergents to $\beta$ are $1,6,37,228,1405,8658, \dots$, and one can see that the indices $N$ above arise as sums of such denominators. For example, we have $272 = 228 + 37 + 6 + 1$, or $1677 = 1405 + 228 + 37 + 6$. We will exploit this structure to construct a subsequence of indices along which the Sudler product at $\beta$ tends to zero.\\
 
 A similar table shows values of $N$ for which the ratio $P_N(\beta)/N$ is large.\\
 
 \begin{tabular}{|l|l|l|l|l|}
 	\hline
 	$N$ & 30 & 184 & 1133 &6981 \\
 	\hline
 	$P_N(\beta)$ & 1.061 & 1.213 & 1.286 & 1.378 \\
 	\hline
 \end{tabular}
 ~\\
 
 From the analogue of the reflection principle \eqref{reflectionprinciple} for $\beta$, it is not surprising that these indices $N$ also arise from denominators of convergents to $\beta$, but in a ``reflected'' way. Indeed, we can see that $30 = 37 - 6 - 1$, that $184 = 228 - 37 - 6 - 1$, and so on. Thus we can imitate this structure to construct a subsequence of indices along which the Sudler product at $\beta$ shows the desired growth behaviour.\\
 \begin{proof}[Proof of Lemma \ref{lemma_b6}]
 	Let $\beta =[6,6,\ldots]= \sqrt{10}-3$, and let $(q_n)_{n=0}^{\infty}$ be the sequence of denominators of convergents to $\beta$. Set $N_k = q_k + q_{k-1} + \dots + q_1+q_0$ for $k \geq 1$. Furthermore, we set $M_j^{(k)} = q_k + q_{k-1} + \dots + q_{j+2} + q_{j+1}$, for $j=0, \dots, k-1$, and $M_k^{(k)}=0$. Then  
 	\begin{eqnarray}
 	P_{N_k} (\beta) & = & \prod_{r=1}^{N_k} 2 |\sin \pi r \beta | \, =\, \prod_{j=1}^{k} \prod_{r=M_j^{(k)}+1}^{M_j^{(k)} + q_j}\!\!\!2 |\sin \pi r \beta | \nonumber\\
 	& = & \prod_{j=1}^{k} \prod_{r=1}^{q_j} 2 |\sin \pi (r \beta + M_j^{(k)} \beta) | \nonumber\\
 	& = & \prod_{j=1}^{k} \prod_{r=1}^{q_j} 2 \left|\sin  \pi \left(r \beta + (-1)^{j} \frac{\ve_j^{(k)}}{q_j} \right)   \right| \nonumber\\
 	& = & \prod_{j=1}^{k} P_{q_j}(\beta,\ve_j^{(k)}), \label{p-fact}
 	\end{eqnarray}
 	where in view of \eqref{b1}, \eqref{b2} $\ve_j^{(k)}$ is defined so that
 	\begin{eqnarray*}
 		\frac{(-1)^{j} \ve_j^{(k)}}{q_j} & = & M_j^{(k)} \beta - q_{k-1} - q_{k_2} - \dots - q_{j+1} - q_j \\
 		& = & (q_k + q_{k-1} + \dots + q_{j+2} + q_{j+1}) \beta - q_{k-1} - q_{k_2} - \dots - q_{j+1} - q_j \\  [1ex]
 		& = & (-1)^{k} \beta^{k+1} + (-1)^{k-1} \beta^{k} + \dots + (-1)^{j+2} \beta^{j+3} + (-1)^{j+1} \beta^{j+2}.
 	\end{eqnarray*}
 	Accordingly
 	$$ \frac{\ve_j^{(k)}}{q_j \beta^{j+1}} =(-1)^{k-j} \beta^{k-j} + (-1)^{k-j-1} \beta^{k-j-1} + \dots + (-1)^{2} \beta^{2} + (-1)^{1} \beta^{1}.  $$
 	Since $q_1\beta^2 \leq q_j \beta^{j+1} \leq q_2\be^3$ for all $j=1,2,\ldots$  and
 	$$ -\frac{\be}{\sqrt{40}}\, \leq \,
 	\sum_{r=1}^n \frac{  (-\be)^{r} }{\sqrt{40}}\, \leq \,   -\frac{\be}{\sqrt{40}} + \frac{\be^2}{\sqrt{40}} , \quad n=1,2,\ldots $$
 	we deduce that  $\ve_j^{(k)} \in [-0.0257,-0.02]$ for all $k\geq 1$ and $ j< k$. Furthermore we can verify that 
 	
 	$G_{\be}(-0.257)  < G_\beta(-0.020) \approx 0.949$, up to an error of $\pm 0.01$ -- again this can be formally proved with an appropriate estimate for the approximation errors in \eqref{gformula}. \\
 	
 	\noindent\textit{Claim: } We have $G_\beta(\ve) \leq 0.96$ for any $\ve \in [-0.0257,-0.02]$.\\
 	\textit{Proof of Claim:} We know that $\log G_{\be}$ is strictly concave in the interval $[-1/\sqrt{40},(6+\be)/\sqrt{40} ]$ that is formed by two consecutive roots of $G_{\be}$. Consequently, $\log G_{\be}$ --and hence also $G_{\be}$-- is increasing in some interval $[-1/\sqrt{40}, s_0]$ and decreasing in $[s_0, (6+\be)/\sqrt{40}]$. 
 	We saw that $G_{\be}(-0.025)< G_{\be}(-0.02)< 0.96$ and also we can show that $G_{\be}(0)> 1.05 $, so $s_0>0$ and $G_\be$ is increasing in $[-1/\sqrt{40}, 0]$. The Claim now follows. \\
 	
 	\noindent Since the functions $P_{q_j}$ converge uniformly to $G_{\beta},$ we may bound all factors $P_{q_j}(\beta,\ve_j^{(k)})$ in \eqref{p-fact} from above by $0.97$ whenever $j \geq j_0$  for some appropriate $j_0$ (independent of $k$). Furthermore we can show that the remaining initial factors are bounded from below arguing as in the proof of Theorem \ref{th3}. This proves that $P_{N_k} (\beta) \ll 0.97^{k - j_0}$ and thus $P_{N_k}(\beta) \to 0$ as $k \to \infty$. Consequently, $\liminf\limits_{N \to \infty} P_N(\beta)=0$.	\\
 	
 	Proving that $\limsup \limits_{N\to\infty}\dfrac{P_N(\be)}{N} = \infty$ can be done in a perfectly analogous way. We define $N_k = q_{k+1} - q_k - q_{k-1} - \ldots - q_1$, and use the reflection principle \eqref{reflectionprinciple} adjusted to the case of the irrational $\be$. Then we are led to estimating products similar to the ones above, except that there is a additional perturbation $- q_{k+1} \beta$. However, the influence of this additional perturbation is very small (just one additional term in a geometric series -- cf.\ the remarks after the proof of Theorem \ref{th3}), so we can use exactly the same estimates as above in order to obtain the desired conclusion.
 \end{proof}

\noindent It remains to deal with the case when $b \in \{2,3,4,5\}$. 

\begin{lemma} \label{lemma_b2-5}
Let $\be$ be defined as in Theorem \ref{lineargrowththeorem}, for $b \in \{2,3,4,5\}$. Then 
$$\liminf \limits_{N \to \infty} P_N(\be) > 0 \qquad \text{and} \qquad \limsup \limits_{N\to\infty}\dfrac{P_N(\be)}{N} < \infty. $$
\end{lemma}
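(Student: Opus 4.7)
The plan is to adapt the strategy used in the proof of Theorem \ref{th3} for the golden ratio to the remaining values $b \in \{2,3,4,5\}$. Given an integer $N$, one writes it in its Ostrowski expansion $N = c_{n+1} q_n + c_n q_{n-1} + \ldots + c_1 q_0$ with respect to $\beta=[b,b,b,\ldots]$ and splits $P_N(\beta)$ into blocks of length $q_j$. Using the periodicity of the sine and the recursion \eqref{b1}, each block equals $P_{q_j}(\beta,\ve_j)$ for an explicit perturbation $\ve_j$ that depends on the Ostrowski digits $c_{j+1},c_{j+2},\ldots$ By Theorem \ref{th6} each such block converges to $G_\beta(\ve_j)$, uniformly on compact intervals avoiding the zeros of $G_\beta$. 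Hence the proof reduces to showing that the finite product $\prod_j G_\beta(\ve_j)$ stays bounded below (for the $\liminf$) and, via the $\beta$-analogue of the reflection principle \eqref{reflectionprinciple}, to the same task with an additional overall shift by $-q_n\beta$ (for the $\limsup$).

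The first step is to determine the range of admissible perturbations $\ve_j$. Using \eqref{b1} and summing the tail of the resulting alternating geometric series in $\beta^k$, with digits bounded by $b$ and subject to the constraint $c_i=0$ whenever $c_{i+1}=b$, one obtains an explicit interval $I_b\subset\mathbb{R}$ that contains all possible $\ve_j$ arising for sufficiently large $j$. For $b=2$ the argument then proceeds exactly as in the golden ratio case: one checks numerically that $G_\beta(\ve)>1$ at the two endpoints of $I_b$ and invokes the strict log-concavity of $G_\beta$ between consecutive roots (Proposition \ref{gbetaproperties}) to conclude $G_\beta(\ve)>1$ throughout $I_b$. The contribution of the finitely many blocks of small index is handled exactly as in the proof of Theorem \ref{th3}, using best-approximation properties of continued fraction convergents to keep the corresponding sines bounded below.

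The main obstacle lies in the cases $b\in\{3,4,5\}$, where $I_b$ is wide enough that $G_\beta$ actually dips below $1$ at certain perturbations that are genuinely attained by the Ostrowski expansion. The key structural observation is that an unfavourable $\ve_j$ can only arise when an adjacent Ostrowski digit $c_{j+1}$ takes one of its extreme admissible values; but this same extremal choice forces the neighbouring perturbation $\ve_{j-1}$ (or $\ve_{j+1}$) to lie in a subrange of $I_b$ where $G_\beta$ is comfortably larger than $1$. The plan is therefore to group the factors $G_\beta(\ve_j)$ into consecutive pairs (or short runs) and verify that each grouped product satisfies $G_\beta(\ve_{j-1})G_\beta(\ve_j)>1$. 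Concretely, one partitions the possible joint values $(\ve_{j-1},\ve_j)$ into a finite family of rectangles and, on each rectangle, reduces the inequality to the four corners by means of log-concavity of $\log G_\beta$ in each coordinate separately; the corner inequalities are then checked numerically, with explicit tail estimates for the infinite product \eqref{pinftyformula} controlling the truncation error. The hard part will be the bookkeeping needed to enumerate and verify these finitely many pairing inequalities for each $b\in\{3,4,5\}$, and to handle boundary cases at the start and end of the Ostrowski expansion where a missing neighbour prevents pairing.

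Finally, once the $\liminf$ bound is established, the $\limsup$ estimate follows by applying the reflection principle \eqref{reflectionprinciple} adapted to $\beta$: writing $P_N(\beta)$ as $P_{q_n-1}(\beta)$ divided by a dual Sudler product involving the indices $1,\ldots,q_n-N-1$ shifted by $-q_n\beta$, one reduces to the same kind of perturbed-product estimate as in the $\liminf$ argument, with the only difference being one additional geometric-series term of size $\|q_n\beta\|\asymp \beta^{n+1}$. This enlarges the effective range of admissible $\ve_j$ by an arbitrarily small amount for large $n$, so the same pairing analysis still applies. Combining this with the asymptotic \eqref{cb2} then yields $\limsup_{N\to\infty}P_N(\beta)/N<\infty$.
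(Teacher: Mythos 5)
Your high-level plan — Ostrowski decomposition, block-by-block reduction to the limit function $G_\beta$, uniform convergence plus log-concavity to reduce range checks to finitely many numerical estimates, and the reflection principle for the $\limsup$ — matches what the paper does. For $b=2$ you are also right that no dangerous negative perturbations occur and the argument is essentially identical to the golden ratio case.

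The genuine gap is in your treatment of $b\in\{3,4,5\}$, and specifically in the claim that ``an unfavourable $\ve_j$ can only arise when an \emph{adjacent} Ostrowski digit takes an extreme value,'' together with the proposed remedy of verifying $G_\beta(\ve_{j-1})G_\beta(\ve_j)>1$ for \emph{consecutive} pairs. The paper's proof (Lemma \ref{lemma_b2-5}) does not pair consecutive $\ve_j$'s. It first regroups the blocks so that all extra copies of a given $q_{n-i}$ (those present when the digit $c_{n+1-i}\geq 2$) are bundled with the single leading block of $q_{n-i-1}$ that they precede; a large digit then automatically contributes several factors with large \emph{positive} perturbations in the same group as the possibly-bad factor. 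But this local compensation is not always available. In the configuration that the paper calls Case 2b — namely $c_{n+1-i}=1$, $c_{n+2-i}=0$, $c_{n+3-i}\geq 3$ — the group at index $i$ consists of the single factor $P_{q_{n-i-1}}(\beta,\ve)$ with $\ve$ slightly below the threshold where $G_\beta(\ve)<1$, yet there is no adjacent positive factor to pair it with: the compensating large factors live in the group at index $i-2$, because that is where the extra copies generated by the offending digit $c_{n+3-i}\geq 3$ sit. The paper resolves this by a \emph{global counting argument}: it observes that the number of Case 2b groups equals the total number of ``large digit followed by zero'' configurations, which is exactly the number of compensating groups in Cases 6c, 6d, 6e, and then compares $0.9^A$ against $3^{A^{(3)}}7^{A^{(4)}}8^{A^{(5)}}$ using $A=A^{(3)}+A^{(4)}+A^{(5)}$. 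A rigid scheme of consecutive pairs (or even short fixed-length runs) risks placing a Case 2b factor and its compensator in different groups and has no mechanism for the cross-group accounting; without that accounting the $\liminf$ argument does not close. Your corner-of-rectangle reduction for joint $(\ve_{j-1},\ve_j)$ values is sound in principle but unnecessary: the paper bounds each $\ve$ in an interval independently and multiplies one-dimensional lower bounds, which is simpler and suffices once the grouping is chosen correctly.
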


In principle we could settle all cases $b \in \{2,3,4,5\}$ simultaneously, keeping track of all possible structures of the perturbations $\ve$ coming from the Ostrowski expansions of the corresponding integers with respect to $\beta$. However, for the reader's convenience we decided to give a proof only for the particular case $b=5$, which is the most delicate one. The other cases $b \in \{2,3,4\}$ can be treated in a perfectly analogous way.\\

So we fix $b=5$, which means that $\beta = \frac{\sqrt{29}-5}{2}$. Before we start with the proof, let us give a heuristic description of what will happen. The first few denominators of convergents to $\beta$ are given by $1,5,26,135,701,\dots$. We have $G_\beta(0) = C_5 > 1$ by Corollary \ref{cor: when M-V const greater one}, and (as we will show) we have $G_\beta(\ve)>1$ for all possible \emph{positive} perturbations $\ve$ that could come from the Ostrowski expansion. Thus $\liminf P_N(\beta)=0$ could only happen as the effect of \emph{negative} perturbations $\ve$ for which $G_\beta(\ve)<1$. Since the differences $\beta - p_n/q_n$ have alternating signs, we know which structure in the Ostrowski expansion can give negative perturbations; essentially, there are the ones coming from an \emph{odd} difference in the index of the convergent denominator. Indeed, consider for example the case when $N = 31 = 26+5$. According to the Ostrowski expansion, we split the product $P_N(\beta)$ in the form 
\begin{eqnarray*}
P_N(\beta) & = & \prod_{r=1}^N 2 |\sin \pi r \beta | \\
& = & P_{26}(\beta) \cdot \prod_{r=1}^5 2 |\sin \pi (26 + r) \beta| \\
& = & P_{26}(\beta) \cdot \prod_{r=1}^5 2 |\sin \pi (\underbrace{26 \beta}_{\approx 5.007} + r \beta)| \\
& = & P_{26}(\beta) \cdot P_5 (\beta, \ve),
\end{eqnarray*}
where $\ve$ is the deviation of $26 \beta \cdot 5$ from the nearest integer (which is 25), and thus $\ve \approx -0.035$. Note that $\ve$ has negative sign. This essentially is a consequence of the fact that $26=q_3$ and $5 = q_2$, and that the difference of the indices is $3-2$, which is \emph{odd}.\\

To give a few other examples, when $N = 135 + 5$ where $135 = q_4$ and $5 = q_2$, then we get $P_{140}(\beta) = P_{135}(\beta) \cdot P_5 (\beta, \ve)$ with $\ve \approx 0.0069$, which is positive (and thus is good, since it implies $G_\beta(\ve)>1$). When we have $N = 701 + 5$ where $701=q_5$ and $5 = q_2$ then we get $P_{706}(\beta) = P_{701}(\beta) \cdot P_5 (\beta, \ve)$ with $\ve \approx -0.0013$, which again is negative (since $5-2=3$ is \emph{odd}).\\

So we saw that negative perturbations can only come from \emph{odd} differences of indices in the Ostrowski expansion. However, it turns out that the perturbation above, which was roughly $-0.035$, still does not cause us problems, since there $G_\beta$ still exceeds one (we have $G_\beta(-0.035) \approx 1.02$).\\

To reach a region where indeed $G_\beta<1$, we need the Ostrowski expansion of $N$ to have a specific structure. Consider now the number $N = 83 = 26 + 26 + 26 + 5$. According to the Ostrowski expansion, we decompose the product $P_N(\beta)$ into 
\begin{eqnarray*}
P_N(\beta) & = & \left(\prod_{r=1}^{26} 2 |\sin  \pi r \beta|\right) \cdot \left(\prod_{r=1}^{26} 2 |\sin \pi (26 + r) \beta|\right) \cdot \\
& & \cdot \left(\prod_{r=1}^{26} 2 |\sin\pi (26 + 26 + r) \beta|\right) \cdot \left(\prod_{r=1}^5 2 |\sin\pi (26 + 26 + 26 + r) \beta|\right).
\end{eqnarray*}
The last product is $P_5(\beta,\ve)$ with a perturbation $\ve$ coming from the difference between $3 \cdot 26 \beta \cdot 5$ and the nearest integer (which is 75), which gives the large negative perturbation $\ve \approx - 0.107$. We have $G_\beta(-0.107) \approx 0.54$, which is significantly smaller than 1, so in the decomposition of $P_N$ there is a factor which is much smaller than 1. However, note that we could only reach such a large negative perturbation by constructing $N$ in a way such that in the Ostrowski representation the same number (in our case 26) occurs more than once. Thus in the decomposition of $P_N$ we also see the products 
$$ \prod_{r=1}^{26} 2 |\sin\pi (26 + r) \beta| = P_{26}(\beta,\ve),   $$
with $\ve \approx 0.19$, and 
$$ \prod_{r=1}^{26} 2 |\sin\pi (26 + 26 + r)\beta| = P_{26}(\beta,\ve) $$
with $\ve \approx 0.37$, which means that we see two perturbed Sudler products $P_{26}$ with large \emph{positive} perturbations. This gives us two additional large factors of size roughly $G_\beta(0.19) \approx 2.27$ and $G_\beta(0.37) \approx 2.67$ in the decomposition of $P_N(\beta)$, which actually even overcompensate the influence of the single small factor of size roughly $0.54$.\\

\begin{figure}%
\centering
\subfigure[][]{%
\includegraphics[scale=0.5]{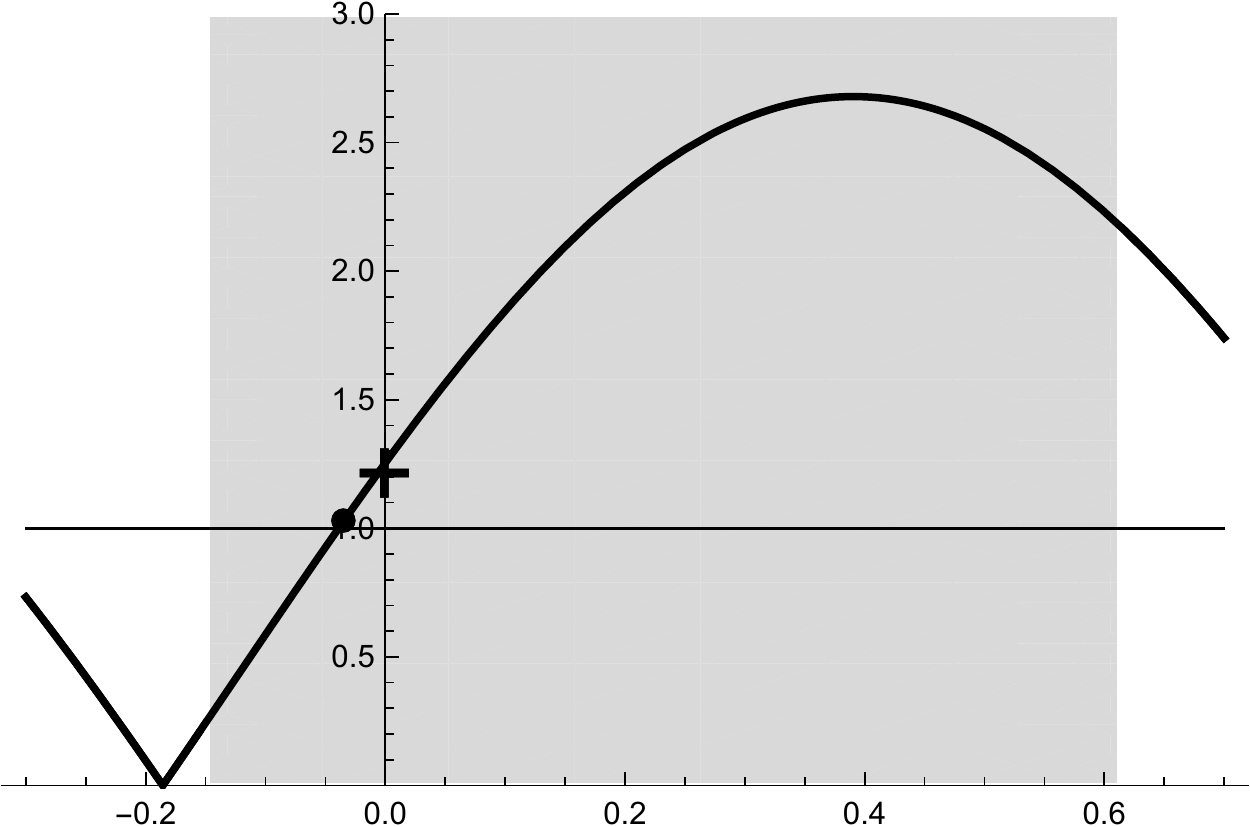}}% 
\hspace{8pt}%
\subfigure[][]{%
\includegraphics[scale=0.5]{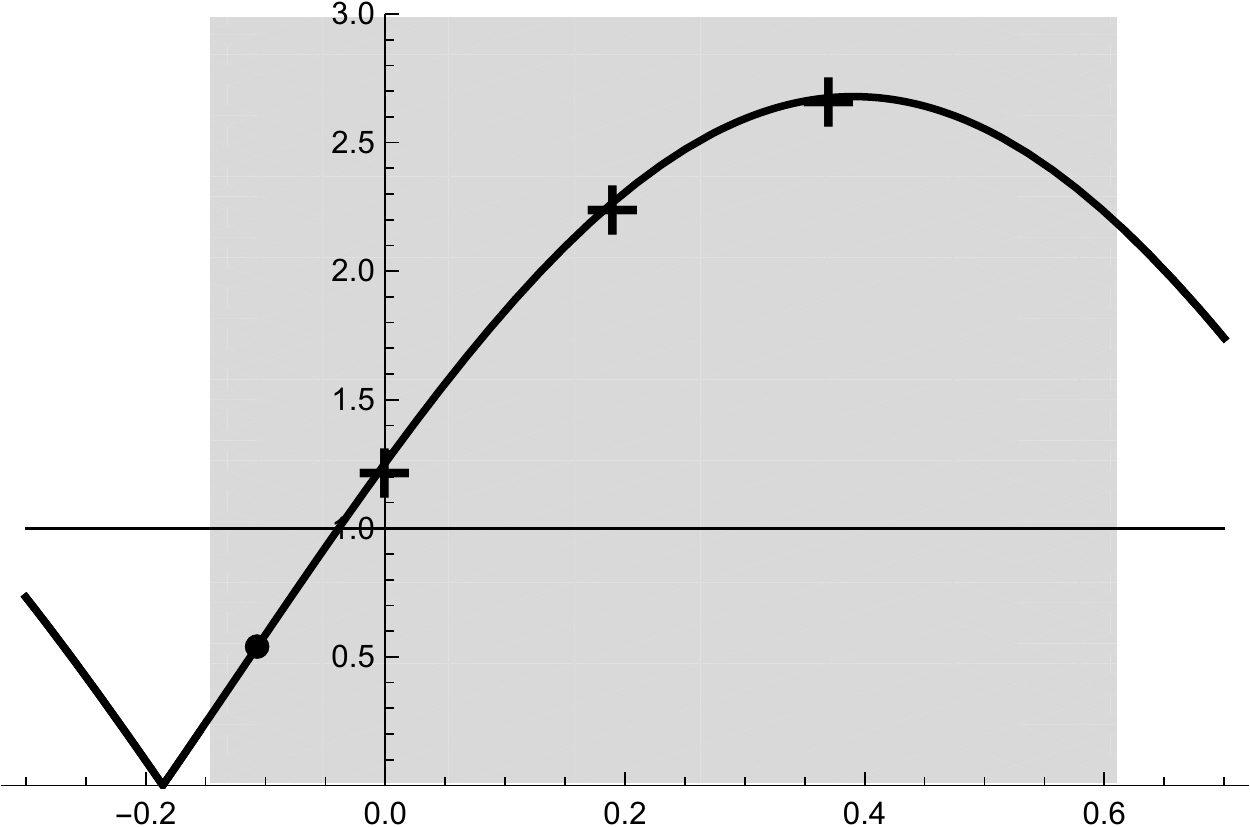}} \\
\caption{Two plots of $G_\beta(\ve)$ for $b=5$ in the range $-0.3 \leq \ve \leq 0.7$. The shaded region, ranging from ca.\ -0.15 to 0.61, is the region of possible perturbations $\ve$ which can arise from the Ostrowski representation used in the decomposition of $P_N(\be)$. Note that contributions smaller than 1 can only come from negative perturbations $\ve$. In the left picture, we see the case $N=31=26+5$. We have the contribution of the (unperturbed) product $P_{26}(\beta) = P_{26}(\beta,0)$, depicted by a cross symbol on the y-axis, and a contribution of roughly $P_5(\beta,-0.035) \approx G_\beta(-0.035)$, depicted by a dot symbol on the left of the y-axis. Note that $G_\beta(0) = C_5 > 1$, and that the negative perturbation in $P_5(\beta,-0.035)$ is so small that it still gives a factor which exceeds 1. The picture on the right shows the case $N = 3 \cdot 26 + 5$. Unlike in the golden ratio case, where we had $G_\phi(\ve)>1$ for all occurring perturbations $\ve$, there now is indeed one sub-product which leads to a perturbation value $\ve \approx -0.10$ such that $G_\be (\ve)$ is smaller than one (depicted by a dot). However, such a small value of $G_\be$ could only be reached by the way how the Ostrowski expansion of $N$ was constructed, with $N = 26+26+26+5$ containing the summand 26 three times. The whole product $P_N(\be)$ decomposes into additional products ranging from $1$ to $26$, from $26+1$ to $2\cdot 26$, and from $2 \cdot 26+1$ to $3 \cdot 26$, respectively. These correspond to values of $G_\be$ at perturbations ca.\ $0.00$, $0.19$ and $0.37$, respectively (depicted by three cross symbols). Since $G_\be$ is significantly greater than 1 at two of these three positions, the total contribution of these 3 large factors overcompensates the contribution of one small factor at the negative perturbation. This is the general principle that we will use in the proof of the theorem: critically large negative perturbations can only come from large digits in the Ostrowski expansion, but large digits always necessarily also generate positive perturbations.}%
\label{fig:ex3_a}%
\end{figure}

\begin{figure}[h]
        \includegraphics[scale=0.6]{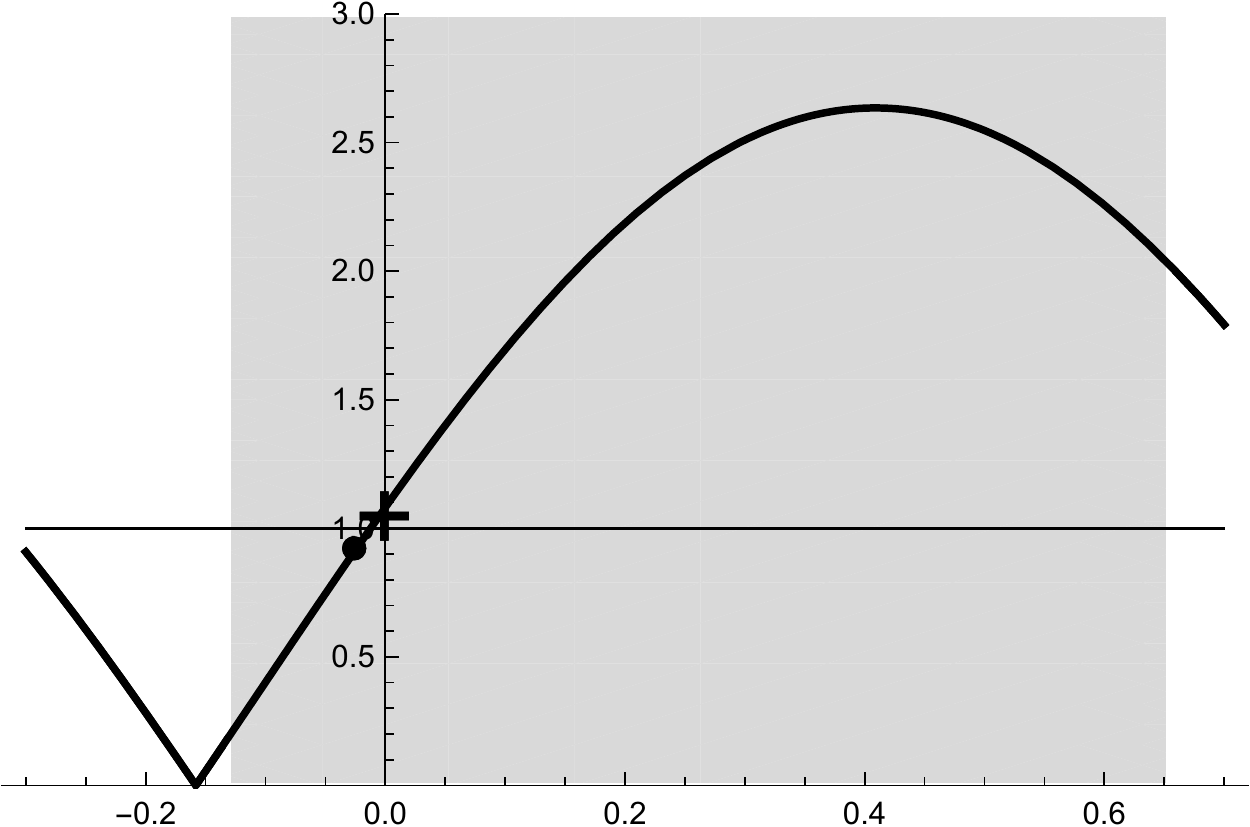}
        \caption{An analogue of the picture on the left-hand side of Figure 4, but for the case $b=6$ instead of $b=5$. Here $N=43=37+6$, which again is the sum of two consecutive convergent denominators. The product $P_{37}(\beta)$ is roughly 1.08. The perturbed product is roughly $P_6(\beta,-0.026) \approx G_\beta(-0.026) \approx 0.92$, which in contrast to the case $b=5$ now is smaller than 1. Choosing a number $N$ which is the sum of more convergent denominators to $\beta$, such as $N=1676=1405+228+37+6$, would give more contributions near the dot in the picture, all of them producing factors which are smaller than 1. This is the principle behind the proof of Lemma \ref{lemma>1}.}
    \end{figure}

This outlines the basic strategy for the proof of the theorem. We will make a case distinction, depending on whether a ``digit'' in the Ostrowski expansion of $N$ exceeds one or not. Whenever a digit is one or zero, it cannot lead to a large negative perturbation for the subsequent partial product, and everything is okay. If a digit exceeds one, then this gives large additional factors in our product decomposition, which overcompensate the potential small factor coming from the subsequent partial product which may have a large negative shift.\\

\begin{proof}[Proof of Lemma \ref{lemma_b2-5}]
For the arbitrary $N\geq 1$ let $n=n(N)\geq 1$ be such that $q_{n}\leq N+1 < q_{n+1}$. We can write $N$ in its Ostrowski representation in the form
$$ N = 	c_{n+1} q_n + c_n q_{n-1} + \dots + c_2 q_1 + c_1 q_0, $$
where we have $0 \leq c_1 < 5$ and 
$$
0 \leq c_i \leq 5 \quad \text{for all $i$} \qquad \text{and } \qquad \text{$c_i=0$ \, whenever \,  $c_{i+1}=5$.}
$$
For $0 \leq i \leq n$ and $0 \leq a_{i} < c_{n+1-i}$ we set 
$$
M_{i,a_i}  = c_{n+1} q_n + c_n q_{n-1} + \dots + c_{n+2-i} q_{n+1-i} + a_{i} q_{n-i},
$$
so that we can split the whole product $P_N(\beta)$ in the form
\begin{eqnarray*}
	P_N(\be) & = & \prod_{i=0}^n ~\prod_{a_i=0}^{c_{n+1-i} - 1}~ \prod_{r=M_{i,a_{i}}+1}^{M_{i,a_{i}}+q_{n-i}} 2 |\sin \pi r \beta| \\
	& = & \prod_{i=0}^n ~\prod_{a_i=0}^{c_{n+1-i} - 1}~ \prod_{r=1}^{q_{n-i}} 2 |\sin\pi (M_{i,a_{i}}+r) \beta|.
\end{eqnarray*}
To exploit the phenomenon addressed above, we will now split the product in such a way that we combine the contribution of ``large'' digits of $q_i$ with the contribution of the first (zero) digit of $q_{i-1}$. In formulas, we re-organize the product above in the form
\begin{eqnarray}
P_N(\be) & = & \prod_{i=0}^n ~\prod_{a_i=0}^{c_{n+1-i} - 1}~ \prod_{r=1}^{q_{n-i}} 2 |\sin\pi (M_{i,a_{i}}+r) \beta| \nonumber\\
& = & \left( \prod_{r=1}^{q_n} 2 |\sin\pi (M_{0,0}+r) \beta| \right) \cdot \label{prod_l} \\
& & \cdot \prod_{i=0}^n \left(\left(\prod_{a_i=1}^{c_{n+1-i} - 1}~ \prod_{r=1}^{q_{n-i}} 2 |\sin \pi (M_{i,a_{i}}+r) \beta | \right) \left(\prod_{r=1}^{q_{n-i-1}} 2 |\sin \pi (M_{i+1,0}+r) \beta | \right)\right)  \label{prod_l2}
\end{eqnarray} 
We have $M_{0,0}=0$, so for the product in line \eqref{prod_l} we simply have $\prod_{r=1}^{q_n} 2 |\sin \pi (M_{0,0}+r) \beta | \to C_b$ as $n\to\infty$. To emphasize the decomposition again, we have split the product in such a way that in \eqref{prod_l2} the contribution of the ``digits'' greater than one attached to some $q_j$ is combined with the contribution of the digit one attached to the next-smallest convergent denominator $q_{j-1}$.\\

Note that the product over $a_i$ in \eqref{prod_l2} is empty when $c_{n+1-i} \in \{0,1\}$. Similarly the final product in \eqref{prod_l2} can be empty, which happens when $c_{n-i} = 0$. We remind the reader that products over empty index sets are understood to equal 1.\\

So let us assume that we have chosen some value of $i$ in \eqref{prod_l2}. We wish to show that there is a $i_0$ such that
\begin{equation} \label{prod_ex}
\left(\prod_{a_i=1}^{c_{n+1-i} - 1}~ \prod_{r=1}^{q_{n-i}} 2 |\sin \pi (M_{i,a_{i}}+r) \beta | \right) \left(\prod_{r=1}^{q_{n-i-1}} 2 |\sin \pi (M_{i+1,0}+r) \beta | \right)
\end{equation}
is large enough whenever  $n-i \geq i_0$. Ideally we would wish that all factors in \eqref{prod_ex} are $>1$, since that would directly imply our desired result. It will turn out that it is not true that all factors in \eqref{prod_ex} exceed 1. While sometimes it may happen that such a factor is a bit smaller than 1, this will be compensated by other factors in \eqref{prod_ex} which exceed 1.\\

We distinguish the following cases depending on the values of $c_{n+1-i}, c_{n-i}$ and also, at times, depending 
on the values of $c_{n+2-i}$, and $c_{n+3-i}$.\\

%\begin{itemize}
\noindent $ \bullet $ Case 1: $c_{n-i} \neq 0$ and $c_{n+1-i}=0$.\\
  In this case the product $\prod\limits_{a_i=1}^{c_{n+1-i} - 1}\prod\limits_{r=1}^{q_{n-i}} 2 |\sin \pi (M_{i,a_{i}}+r) \beta | $ in \eqref{prod_ex} is empty, and we will \vspace{-2mm} \\
 show that
 \begin{equation} \label{to_show}
 \prod_{r=1}^{q_{n-i-1}} 2 |\sin \pi (M_{i+1,0}+r) \beta | > 1.
 \end{equation}
 Recall that by definition 
 $$
 M_{i+1,0}  = c_{n+1} q_n + c_n q_{n-1} + \dots + c_{n+1-i+1} q_{n-i+1} + c_{n+1-i} q_{n-i} + 0 \cdot q_{n-i-1},
 $$
 where the last term is zero because $a_{i+1}=0$. Since we assumed that $c_{n+1-i}=0$ the penultimate term also vanishes. Hence
 $$  M_{i+1,0}  = c_{n+1} q_n + c_n q_{n-1} + \dots + c_{n+2-i} q_{n-i+1}.  $$
 We write 
 \begin{eqnarray*}
 \prod_{r=1}^{q_{n-i-1}} 2 |\sin \pi (M_{i+1,0}+r) \beta |  & = & \prod_{r=1}^{q_{n-i-1}} 2 \left|\sin  \pi \left(r \beta + \frac{(-1)^{n-i-1} \ve}{q_{n-i-1}} \right) \right| =  P_{q_{n-i-1}} (\beta, \ve)  ,
 \end{eqnarray*}
 where  by \eqref{b1} and \eqref{b2} $\varepsilon$ can be chosen such that 
\begin{eqnarray*}
 \frac{(-1)^{n-i-1} \ve}{q_{n-i-1}} &=&  M_{i+1,0} \beta - (c_{n+1} q_{n-1} + \dots + c_{n+1-i+1} q_{n-i} ) \\
 &=& - c_{n+1} (-\beta)^{n+1} - c_n (-\beta)^{n} - \dots - c_{n+2-i} (- \beta)^{n+2-i}.
\end{eqnarray*}
Thus we have
\begin{eqnarray} \label{final_t}
\frac{\ve}{q_{n-i-1} \beta^{n-i}} & = & c_{n+1} (-1)^{i+1} \beta^{i+1} + c_n (-1)^{i} \beta^{i} + \dots + c_{n+2-i} (-1)^2 \beta^{2}.
\end{eqnarray}
 In the sum on the right of \eqref{final_t} there are both negative and positive contributions. Note that the last (largest) term is positive. Since the coefficients $c_{n+1}, ~c_n, \dots, c_{n+2-i}$ are all bounded above by 5, the total contribution of the positive terms is at most 
$$
\sum_{j=1}^\infty 5 \beta^{2j} = \beta,
$$
and the total contribution of the negative terms is at most
$$
- \sum_{j=1}^\infty 5 \beta^{2j+1} = -\beta^2.
$$
By \eqref{b2} we have $q_{n-i-1} \beta^{n-i} \to 1/\sqrt{29}$. Note that $\beta/\sqrt{29} \approx 0.036 < 0.04$ and $- \beta^2 / \sqrt{29} \approx -0.007 > -0.01$. So we have that
$$
\ve \in [-0.01,0.04],
$$
provided that $i$ is sufficiently large. We have $G_\beta(0.04) \approx 1.50$, and in particular we can formally prove (again using formula \eqref{pinftyformula} for $G_\beta$ and estimates for approximation errors for the infinite product) that $G_\beta(0.04) > 1.1$ . Similarly, we have $G_\beta(-0.01) \approx 1.19$, and we can formally prove that $G_\beta(-0.01) > 1.1$ . Thus by Proposition \ref{gbetaproperties}, the log-concavity of $G_\beta$ implies that $G_\beta(\ve) > 1.1$ throughout the whole range $[-0.01,0.04]$ of possible perturbations $\ve$. By the uniform convergence in Theorem \ref{th6} this means that $P_{q_{n-i-1}} (\beta, \ve)> 1.05$ whenever $i$ is sufficiently large. Thus we have established \eqref{to_show} for all $i \geq i_0$.  \par The  upshot is,  all  factors appearing in \eqref{prod_ex} which belong to Case $1$ are $>1$, except for finitely many of them;  the  overall contribution of the Case $1$ factors to the product in \eqref{prod_ex} can be bounded below by an absolute constant as in the proof of Theorem \ref{th3}.\\

\noindent $\bullet$ Case $2$: $c_{n-i} \neq 0$ and $c_{n+1-i}=1$.\\
In this case again the product  $\prod\limits_{a_i=1}^{c_{n+1-i} - 1}\prod\limits_{r=1}^{q_{n-i}} 2 |\sin \pi (M_{i,a_{i}}+r) \beta | $  in \eqref{prod_ex} is empty, and we \vspace{-2mm} \\
have to control the size of 
 \begin{equation} \label{to_show_2}
 \prod_{r=1}^{q_{n-i-1}} 2 |\sin \pi (M_{i+1,0}+r) \beta |.
 \end{equation}
Now we have
 $$
 M_{i+1,0}  = c_{n+1} q_n + c_n q_{n-1} + \dots + c_{n+1-i+1} q_{n-i+1} + \underbrace{c_{n+1-i}}_{=1} q_{n-i}.
 $$
 Similar to Case 1, we can write the product in \eqref{to_show_2} in the form
 $$
 \prod_{r=1}^{q_{n-i-1}} 2 |\sin \pi (M_{i+1,0}+r) \beta| = P_{q_{n-i-1}} (\beta, \ve),
 $$
 where instead of \eqref{final_t} we now obtain 
 \begin{eqnarray} 
 \frac{\ve}{q_{n-i-1} \beta^{n-i}}  \label{final_t_2} &=& c_{n+1} (-1)^{i+1} \beta^{i+1} + c_n (-1)^{i} \beta^{i} + \dots + c_{n+1-i+1} (-1)^2 \beta^{2} + (-1)^1 \beta^1. \nonumber
\end{eqnarray}
Here the last term ``$+(-1)^1 \beta^1$'' comes from the assumption that $c_{n+1-i}=1$. Again, we find the maximal positive and negative contributions to determine the range of all possible perturbations $\ve$. We have
$$ \frac{\ve}{q_{n-i-1} \beta^{n-i}} \leq - \beta +   \sum_{j=2}^\infty 5 \beta^{2j} < 0  $$
and 
\begin{eqnarray} 
\frac{\ve}{q_{n-i-1} \beta^{n-i}} & \geq & - \beta + c_{n+2-i} \beta^2 - c_{n+3-i} \beta^3 -   \sum_{j=2}^\infty 5 \beta^{2j+1} \nonumber\\
& = & -\beta + c_{n+2-i} \beta^2 - c_{n+3-i} \beta^3 - \beta^4.\label{detail}
\end{eqnarray}

We know that $G_\beta(0) = C_5 \approx 1.25$, and we can formally verify that $G_\beta(0) > 1.1$, so $P_{q_{n-i-1}} (\beta, \ve) > 1.1 $ for all $i\geq i_0$.  However, the situation is more delicate regarding the lower bound of possible perturbations: suppose that in \eqref{detail} we ignore the influence of the digits $c_{n+2-i}$ and $c_{n+3-i}$, and just use the estimates $c_{n+2-i} \geq 0$ and $c_{n+3-i} \leq 5$. This would lead to $\ve \geq (- \beta - \beta^2)/\sqrt{29}$. Now $(-\beta - \beta^2)/\sqrt{29} \approx -0.04$, but $G_\beta(-0.04<1) .$  So we need to provide a sharper estimate, and we distinguish further sub-cases based on the values of  $c_{n+2-i}$ and $c_{n+3-i}$.\\

\begin{itemize}
\item[--] Case 2a: $c_{n+3-i} \leq 2$ or $c_{n+2-i} \neq 0$.\\
In this case from \eqref{detail} we can deduce that 
	$$ \frac{\ve}{q_{n-i-1} \beta^{n-i}} \,\geq \, \min \{- \beta - 2 \beta^3 - \beta^4,\,  - \beta + \beta^2 - 5 \beta^3 - \beta^4\}.
	$$ Since $q_{n-i-1} \beta^{n-i} \to 1/\sqrt{29},$ this implies that $\ve > -0.0387$ for sufficiently large $n$. We have $G_\beta(-0.0387) \approx 1.002$, and we can formally verify that $G_\beta(-0.0387) > 1.001$. We noted above that $G_\beta(0)>1.1$. Thus we have $G_\beta(\ve) > 1.001$ for all possible perturbations in Case 2a, and thus $P_{q_{n-i-1}} (\beta, \ve) > 1.0001$ for all sufficiently large $i$. Thus all factors in Case 2a exceed 1, except for finitely many factors coming from indices $i < i_0$ that can be treated similarly as in the proof of Theorem \ref{th3}. Thus the overall contribution of the Case 2a factors is bounded below by a positive absolute constant.\\
	
	\item[--] Case 2b: $c_{n+3-i} \geq 3$ and $c_{n+2-i} = 0$.\\
	
	In this case from \eqref{detail} we have $\ve/(q_{n-i-1} \beta^{n-i})  \geq - \beta - 5 \beta^3 - \beta^4$, and thus by $q_{n-i-1} \beta^{n-i} \to 1/\sqrt{29}$ we have $\ve \geq - 0.043$. We have $G_\beta(-0.043) \approx 0.973$, and we can formally prove that $G(-0.043) > 0.97$       . Note that in Case 2b we can thus \emph{not} guarantee that we have a factor which is 1 or less. Instead we can only deduce that $P_{q_{n-i-1}} (\beta, \ve) > 0.96$, say, for all sufficiently large $i$. Note that Case 2b only occurs when the Ostrowski expansion has a ``digit'' of at least 3, followed by a zero ``digit''. Thus the joint overall contribution of the Case 2b factors is not less that an absolute constant multiplied with 
	\begin{equation} \label{small_c}
	0.96^{A}, \qquad \textrm{where $A = \# \{2 \leq i \leq n:~c_i \geq 3,~ \text{and} ~ c_{i-1} \neq 0 \}$}
	\end{equation}
	(as always, the absolute constant comes from the constribution of finitely many indices for which $i<i_0$). We will need to show that the ``small'' contribution of \eqref{small_c} to the product \eqref{prod_ex} is compensated by an overshoot in the contribution of Case 6.\\
\end{itemize}

\noindent $\bullet$ Case 3:  $c_{n-i} \neq 0$ and $c_{n+1-i}=2$.\\

\noindent In this case the product  $\prod\limits_{a_i=1}^{c_{n+1-i} - 1}\prod\limits_{r=1}^{q_{n-i}} 2 |\sin \pi (M_{i,a_{i}}+r) \beta | $ in \eqref{prod_ex} is not empty, and we \vspace{-2mm} \\
 need  to show that 
\begin{equation} \label{to_show_3}
\left(\prod_{r=1}^{q_{n-i}} 2 |\sin \pi (M_{i,1}+r) \beta| \right) \left(\prod_{r=1}^{q_{n-i-1}} 2 |\sin \pi (M_{i+1,0}+r) \beta |\right)  > 1.
\end{equation}
We analyse the two products separately, and show that their product exceeds 1. It is crucial to combine these products, since the second product in \eqref{to_show_3} alone does \emph{not} necessarily exceed 1, and we need the first product for compensation. We have 
$$ M_{i,1} =  c_{n+1} q_n + c_n q_{n-1} + \dots + c_{n+2-i} q_{n-i+1} + q_{n-i} $$
and 
$$ M_{i+1,0} = c_{n+1} q_n + c_n q_{n-1} + \dots + c_{n+2-i} q_{n-i+1} + 2 q_{n-i} . $$
The product \eqref{to_show_3} equals 
\begin{equation} \label{equals_what}
P_{q_{n-i}} (\beta, \ve_1) P_{q_{n-i-1}} (\beta, \ve_2),
\end{equation}
where 
 \begin{eqnarray*} 
 \frac{\ve_1}{q_{n-i} \beta^{n-i+1}}
 & = & c_{n+1} (-1)^{i} \beta^{i} + c_n (-1)^{i-1} \beta^{i-1} + \dots + c_{n+2-i} (-1)^1 \beta^{1} + \underbrace{1 (-1)^0 \beta^0}_{=1}  \nonumber
 \end{eqnarray*}
  and 
 \begin{eqnarray*} 
  \frac{\ve_2}{q_{n-i-1} \beta^{n-i}}  & = & c_{n+1} (-1)^{i+1} \beta^{i+1} + c_n (-1)^{i} \beta^{i} + \dots + c_{n+2-i} (-1)^2 \beta^{2} + 2 (-1)^1 \beta^1.
 \end{eqnarray*}
Note that $c_{n+1-i}=2$ implies that $c_{n+2-i} \leq 4$. Thus we have
$$
\frac{\ve_1}{q_{n-i} \beta^{n-i+1}} \leq 1 + 5 \sum_{j=1}^\infty \beta^{2j} = 1 + \beta,
$$
and 
$$
\frac{\ve_1}{q_{n-i} \beta^{n-i+1}} \geq 1 - 4 \beta - 5 \sum_{j=1}^\infty \beta^{2j+1} \geq 1 - 4 \beta - \beta^2. 
$$
Again using that $q_{n-i-1} \beta^{n-i} \to 1/\sqrt{29}$, this implies that $\ve_1 \in [0.03,0.23]$ for sufficiently large $i$. Similarly, we estimate the range for $\ve_2$ and obtain
$$
\frac{\ve_2}{q_{n-i-1} \beta^{n-i}} \leq -2 \beta + 5 \sum_{j=1}^\infty \beta^{2j} \leq - \beta,
$$
as well as
$$
\frac{\ve_2}{q_{n-i-1} \beta^{n-i}} \geq - 2 \beta - 5 \sum_{j=1}^\infty \beta^{2j+1} = -2 \beta - \beta^2.
$$
Thus we have $\ve_2 \in [-0.079,-0.036]$ for sufficiently large $i$. We can establish that $G_\beta(\ve) > 1.44$ throughout the possible range for $\ve_1$, and that $G_\beta(\ve) > 0.72$ throughout the possible range for $\ve_2$. Thus $P_{q_{n-i}} (\beta, \ve_1) > 1.43$ and $P_{q_{n-i-1}} (\beta, \ve_2) > 0.71$ whenever $i \geq i_0$, which implies that $P_{q_{n-i}} (\beta, \ve_1) P_{q_{n-i-1}} (\beta, \ve_2) > 1.43 \cdot 0.71 > 1.01$ whenever $i \geq i_0$. Thus the overall joint contribution of the Case 3 factors is bounded below by a positive constant (coming from the factors for which $i \geq i_0$ is not satisfied). \\

\noindent $\bullet$ Case 4:  $c_{n-i} \neq 0$ and $c_{n+1-i}=3$.\\

\noindent In this case we wish to obtain a lower bound for the product
\begin{equation} \label{to_show_4}
\left(\prod_{r=1}^{q_{n-i}} 2 |\sin \pi (M_{i,2}+r) \beta | \right)\!\left(\prod_{r=1}^{q_{n-i}} 2 |\sin\pi (M_{i,1}+r) \beta| \right)\!\left(\prod_{r=1}^{q_{n-i-1}}\!2|\sin\pi (M_{i+1,0}+r) \beta |\right)\!\!.
\end{equation}
This product is equal to
\begin{equation} \label{this_prod}
P_{q_{n-i}} (\beta, \ve_1) P_{q_{n-i}} (\beta, \ve_2) P_{q_{n-i-1}} (\beta, \ve_3),
\end{equation}
where  for $j=1,2$
\begin{eqnarray*}
  \frac{\ve_j}{q_{n-i} \beta^{n-i+1}} &=& c_{n+1} (-1)^{i} \beta^{i} + c_n (-1)^{i-1} \beta^{i-1} + \dots + c_{n+2-i} (-1)^1 \beta^{1} + j
\end{eqnarray*}
 and 
\begin{eqnarray*}
\frac{\ve_3}{q_{n-i-1} \beta^{n-i}} &=& c_{n+1}(-1)^{i+1}\beta^{i+1} + c_n(-1)^{i} \beta^{i} + \dots + c_{n+2-i} (-1)^2 \beta^{2} + 3(-1)^1 \beta^1.
\end{eqnarray*} 
Using a similar analysis as in Case 3, we obtain the restrictions $\ve_1 \in [0.03,0.23],~\ve_2 \in [0.21,0.42]$, and $\ve_3 \in [-0.115,-0.07]$. In these respective ranges the function $G_\beta$ is uniformly bounded below by the values $1.44$, $2.34$ and $0.48$. Note that $1.44 \cdot 2.34 \cdot 0.48 \approx 1.62$. Thus we have $P_{q_{n-i}} (\beta, \ve_1) P_{q_{n-i}} (\beta, \ve_2) P_{q_{n-i-1}} (\beta, \ve_3) > 1.61$ whenever $i \geq i_0$ for appropriate $i_0$. Consequently the joint overall contribution of the Case 4 factors is bounded below by an absolute constant.\\

\noindent $\bullet$ Case 5: $c_{n-i} \neq 0$ and $c_{n+1-i}=4$.\\

\noindent Now we need to control the product 
\begin{equation} \label{this_prod_2}
P_{q_{n-i}} (\beta, \ve_1) P_{q_{n-i}} (\beta, \ve_2) P_{q_{n-i}} (\beta, \ve_3) P_{q_{n-i-1}} (\beta, \ve_4),
\end{equation}
where 
\begin{eqnarray*}
  \frac{\ve_j}{q_{n-i} \beta^{n-i+1}}  
& = & c_{n+1} (-1)^{i} \beta^{i} + c_n (-1)^{i-1} \beta^{i-1} + \dots + c_{n+3-i} (-1)^2 \beta^{2} + c_{n+2-i } (-1)^1 \beta^{1} + j
\end{eqnarray*}
for $j= 1,2,3$, and where 
\begin{eqnarray*}
  \frac{\ve_4}{q_{n-i-1} \beta^{n-i}} &= & c_{n+1} (-1)^{i+1} \beta^{i+1} + c_n (-1)^{i} \beta^{i} + \dots + c_{n+2-i} (-1)^2 \beta^{2} + 4 (-1)^1 \beta^1.
\end{eqnarray*} 
This gives us the restrictions $\ve_1 \in [0.03,0.23],~\ve_2 \in [0.21,0.42], ~\ve_3 \in [0.39,0.61]$ and $\ve_4 \in [-0.151,-0.10]$. Throughout these ranges the function $G_\beta$ is uniformly bounded below by $1.44, ~2.34, ~2.18$ and $0.23$, respectively. We have $1.44 \cdot 2.34 \cdot 2.18 \cdot 0.23 \approx 1.69$. Thus the overall contribution of the Case 5 factors is bounded below by an absolute constant, since the finitely many factors corresponding to indices $n-i\leq i_0$ can be bounded from below arguing as in the proof of Theorem \ref{th3}.\\

\noindent $\bullet$ Case 6:  $c_{n-i} = 0$.\\

\noindent Note that this case includes the case $c_{n+1-i}=5$, since necessarily we always have $c_{n-i} = 0$ when $c_{n+1-i}=5$. We distinguish five subcases:\\

\begin{itemize}
	\item[--] Case 6a: $c_{n+1-i} \in \{0,1\}$. In this case all products in \eqref{prod_ex} are empty, and the value of an empty product is 1.\\
	
	\item[--] Case 6b: $c_{n+1-i} = 2$. We can estimate similar to Case 3 above, but since $c_{n-i} = 0$ we only have $P_{q_{n-i}} (\beta, \ve_1)$ instead of \eqref{equals_what}. As we showed in Case 3 above, we have $P_{q_{n-i}} (\beta, \ve_1) > 1.43$, except for finitely many indices. Thus the joint overall contribution of Case 6b factors is bounded below by an absolute constant.\\
	
	\item[--] Case 6c: $c_{n+1-i} = 3$. In the same way that Case 6b was reduced to Case 3, we can reduce this case to Case 4 from above. We have $P_{q_{n-i}} (\beta, \ve_1) P_{q_{n-i}} (\beta, \ve_2)$ instead of \eqref{this_prod}, and by the Case 4 analysis this can be bounded below by a constant below the product $1.44 \cdot 2.34$, such as $3$. Thus the joint overall contribution of Case 6c factors is bounded below by an absolute constant, multiplied with 
	$$
	3^{A^{(3)}}, \qquad \textrm{where $A^{(3)} = \# \{2 \leq i \leq n:~c_i = 3 ~ \text{and} ~ c_{i-1} = 0 \}$}.
	$$
	
	\item[--] Case 6d: $c_{n+1-i} = 4$. This can be reduced to Case 5 from above. We have $P_{q_{n-i}} (\beta, \ve_1) P_{q_{n-i}} (\beta, \ve_2) P_{q_{n-i}} (\beta, \ve_3)$ instead of \eqref{this_prod_2}, and according to the Case 5 analysis this is bounded below by any constant below $1.44 \cdot 2.34 \cdot 2.18$, such as $7$ (except for the contribution of finitely many indices). Thus the joint overall contribution of Case 6d factors is bounded below by an absolute constant, multiplied with
	$$
	7^{A^{(4)}}, \qquad \textrm{where $A^{(4)} = \# \{2 \leq i \leq n:~c_i = 4 ~ \text{and} ~ c_{i-1} = 0 \}$}.
	$$
	
	\item[--] Case 6e: $c_{n+1-i} = 5$. In this case we have to work a bit again, since we encounter a large \emph{positive} perturbation whose influence has to be controlled. In this case we have to control a product that can be written in the form
	\begin{equation} \label{product_in}
	P_{q_{n-i}} (\beta, \ve_1) P_{q_{n-i}} (\beta, \ve_2) P_{q_{n-i}} (\beta, \ve_3) P_{q_{n-i}} (\beta, \ve_4),
	\end{equation}
	where 
\begin{eqnarray*}
\frac{\ve_j}{q_{n-i} \beta^{n-i+1}} & = & c_{n+1} (-1)^{i} \beta^{i} +  \ldots + c_{n+3-i} (-1)^2 \beta^{2} + c_{n+2-i} (-1)^1 \beta^{1} + j,
\end{eqnarray*}
	for $j=1,2,3,4$. This gives us the restrictions 
	$$
	\ve_1 \in [0.03,0.23],~\ve_2 \in [0.21,0.42], ~\ve_3 \in [0.39,0.61],~\ve_4 \in [0.57,0.80].
	$$
 In these ranges $G_\beta$ is uniformly bounded below by the values $1.44,~2.34,~2.18$ and $1.12$, respectively. We have $1.44 \cdot 2.34 \cdot 2.18 \cdot 1.12 \approx 8.23$, so the product in \eqref{product_in} is always bounded below by $8$ (provided that $n-i \geq i_0$). So the total contribution of the Case 6e factors is bounded below by an absolute constant (coming from the terms with $n-i <i_0$ ; see also the proof of Theorem \ref{th3}), multiplied with
	$$ 	8^{A^{(5)}}, \qquad \textrm{where $A^{(5)} = \# \{2 \leq i \leq n:~c_i = 5 ~ \text{and} ~ c_{i-1} = 0 \}$}.
	$$ 
\end{itemize}

Finally, we collect the contribution of all cases. The overall contribution of each of the Case 1, 2a, 3, 4, 5, 6a, 6b and 6e factors to the product in \eqref{prod_ex} is bounded below by an absolute constant. The joint overall contribution of the Case 2b factors in bounded below by
$$
0.9^{A}, \qquad \textrm{where $A = \# \{2 \leq i \leq n:~c_i \geq 3 ~ \text{and} ~ c_{i-1} = 0 \}$},
$$
while the joint overall contribution of the Case 6c, Case 6d and Case 6e factors was bounded below by
$$
3^{A^{(3)}}, \qquad \textrm{where $A^{(3)} = \# \{2 \leq i \leq n:~c_i = 3 ~ \text{and} ~ c_{i-1} = 0 \}$},
$$
by 
$$
7^{A^{(4)}}, \qquad \textrm{where $A^{(4)} = \# \{2 \leq i \leq n:~c_i = 4 ~ \text{and} ~ c_{i-1} = 0 \}$},
$$
and by
$$
8^{A^{(5)}}, \qquad \textrm{where $A^{(5)} = \# \{2 \leq i \leq n:~c_i = 5 ~ \text{and} ~ c_{i-1} = 0 \}$},
$$
respectively (up to multiplication with an absolute constant). 
Now note that $A = A^{(3)} + A^{(4)} + A^{(5)}$. 
Thus the joint overall contribution of the Case 2b 
and the Case 6c, 6d, 6e 
factors together is bounded below by an absolute constant 
multiplied by the product 
$0.9^A \cdot 3^{A^{(3)}} \cdot 7^{A^{(4)}} 
\cdot 8^{A^{(5)}} \geq 0.9^A 
\cdot 3^{A^{(3)}+A^{(4)}+A^{(5)}} = 0.9^A \cdot 3^A > 1$.\\

Combining all these estimates proves that \eqref{prod_ex} 
is bounded below by an absolute constant. Consequently $P_N(\beta)$ is
bounded below by an absolute constant, as desired.\\

We can establish that $\limsup_{N \to \infty} P_N(\beta)/N < \infty$ in a completely analogous way, using the reflection principle \eqref{reflectionprinciple} for the irrational $\beta$ as in the proof of Theorem \ref{th3}. This ``reflection'' only generates an additional very small perturbation (coming from the reflection at the endpoint), but this does not actually change the range for the permissible perturbations $\ve$ in all our estimates above, since this additional small perturbation only appears as one further term within a geometric progression, and we have always used the estimated coming from summation of the whole infinite geometric progression. So the proof of $\limsup_{N \to \infty} P_N(\beta)/N < \infty$ can be carried out in exactly the same way as the proof of $\liminf\limits_{N \to \infty} P_N(\beta)>0$ above.
\end{proof} 

We conclude the manuscript with the corollary announced 
after Theorem \ref{lineargrowththeorem}. Loosely speaking, 
its purpose is to record that the class 
of quadratic irrationals that we investigated 
is not overly exceptional.
In this context, it would clearly be interesting 
to know the answer to the following problem:
\begin{quest}\label{question1}
For which $\beta\in [0,1]$ does the
associated Sudler product $P_N(\beta)$ grow at most linearly,
i.e. \eqref{folkloreconj} is satisfied?
\end{quest}
A first step towards a solution of this question would be to completely settle the case of Sudler products of numbers
whose continued fraction expansion is two-periodic:
\begin{quest}\label{question2}
For which numbers $\beta$ of the form $\beta=[a,b,a,b,a,b,\ldots]$, with $a \neq b$, is \eqref{folkloreconj} satisfied?
\end{quest}

To formulate our corollary, which is a first
partial result towards and answer of Question \ref{question1}, 
we let $H(\alpha)= \max_{0\leq i\leq d} \vert a_i\vert$ denote the (naive) height of an algebraic number $\alpha \in \mathbb{C}$ 
with minimal polynomial 
$$\mu_{\alpha}(X)
=\sum_{0\leq i\leq d} a_i X^i.
$$
Furthermore, it is well-known that there are 
$\asymp X^3$ many (real) quadratic irrationals 
of height at most $X$.
\begin{cor}\label{height corr}
There are $\gg X$ many quadratic 
irrationals 
$\beta\in \mathbb{R}$ of height at most $X$ with
$$\liminf \limits_{N \to \infty} P_N(\be) = 0, 
\quad \mathrm{and} \quad 
\limsup \limits_{N\to\infty}\dfrac{P_N(\be)}{N} = \infty.
$$
\end{cor}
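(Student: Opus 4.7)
The plan is to produce $\gg X$ quadratic irrationals of height at most $X$ by invoking part (ii) of Theorem \ref{lineargrowththeorem} directly. For each integer $b \geq 6$ I would take $\beta(b) = [b,b,b,\ldots] = \tfrac{1}{2}(-b+\sqrt{b^2+4})$, and appeal to Theorem \ref{lineargrowththeorem}(ii) to conclude that each such $\beta(b)$ simultaneously satisfies $\liminf_{N \to \infty} P_N(\beta(b)) = 0$ and $\limsup_{N \to \infty} P_N(\beta(b))/N = \infty$. Thus the entire statement reduces to bounding the naive height of $\beta(b)$ linearly in $b$ and checking that the family is injective in $b$.

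For the height computation, I would observe that $\beta(b)$ satisfies $\beta(b)^2 + b\beta(b) - 1 = 0$, which is immediate from \eqref{eq: explicit form of beta}. Hence $\mu_{\beta(b)}(X) = X^2 + bX - 1$ is a monic integer polynomial annihilating $\beta(b)$, and it is irreducible over $\mathbb{Q}$ for $b \geq 1$ because its discriminant $b^2+4$ fails to be a perfect square (the Pell-type equation $k^2 - b^2 = 4$ has no positive integer solutions with $b\geq 1$). Consequently $\mu_{\beta(b)}$ is the minimal polynomial of $\beta(b)$ over $\mathbb{Z}$, so $H(\beta(b)) = \max(1,b,1) = b$. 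In particular $\beta(b)$ has height at most $X$ whenever $b \leq X$. Moreover the map $b \mapsto \beta(b)$ is strictly monotonic in $b$ (by the explicit formula), so distinct values of $b$ yield distinct quadratic irrationals.

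Putting these observations together, the collection $\{\beta(b) : 6 \leq b \leq \lfloor X \rfloor\}$ consists of $\lfloor X \rfloor - 5 \gg X$ pairwise distinct real quadratic irrationals of height at most $X$, each enjoying the prescribed $\liminf$ and $\limsup$ behaviour. There is essentially no technical obstacle here: the corollary is a packaging of Theorem \ref{lineargrowththeorem}(ii) with a one-line height estimate, and the only mild point to verify is the irreducibility (equivalently, that $\mu_{\beta(b)}$ really is the minimal polynomial rather than a proper multiple of it), which is handled by the non-squareness of $b^2+4$.
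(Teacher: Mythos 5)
Your proof is correct, and it takes a genuinely simpler route than the paper's. The paper's argument, rather than letting $b$ itself range from $6$ up to size $X$, restricts $b$ to integers of size $\asymp\sqrt X$ and then enlarges the family by considering integer shifts $\gamma(b,c)=\beta(b)+c$ with $c$ also of size $\asymp\sqrt X$, giving $\gg X$ pairs $(b,c)$; it then invokes Besicovitch's theorem on the $\mathbb{Q}$-linear independence of square roots to conclude that the resulting numbers are pairwise distinct. Your version removes both the shift parameter $c$ and the appeal to Besicovitch: you note that the primitive minimal polynomial of $\beta(b)$ is $X^2+bX-1$ (irreducible for $b\geq 1$ since $b^2+4$ is never a perfect square there), so $H(\beta(b))=b$, and distinctness of the $\beta(b)$ is immediate from the uniqueness of continued fraction expansions (or from monotonicity of $b\mapsto\beta(b)$). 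This yields $\lfloor X\rfloor -5\gg X$ numbers directly, which is all that is claimed. As a side remark, your route also sidesteps a small infelicity in the paper's proof: the displayed polynomial $(2(X-c)+b)^2-(b^2+4)$ is not primitive (its coefficients share a factor of $4$, and the printed linear coefficient $2(b-2c)$ should be $4(b-2c)$), so strictly speaking the naive height of $\gamma(b,c)$ should be read off from the primitive minimal polynomial $X^2+(b-2c)X+c^2-bc-1$; your monic $X^2+bX-1$ is already primitive, so no such normalization issue arises. Both approaches give only $\gg X$, well below the $\asymp X^3$ quadratic irrationals of height at most $X$, and both correctly rest on Theorem \ref{lineargrowththeorem}(ii).
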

\begin{proof}
Let $\beta$ be as in \eqref{eq: explicit form of beta}
where $b$ is an integer such that $b^2+4$ is not a perfect square.
We note that, for any integer $c$, the number
$\gamma(b,c) = \beta + c$ is a quadratic
irrationality whose minimal polynomial is seen,
by using \eqref{eq: explicit form of beta}, to be
$$\mu(X)=(2(X-c)+b)^2-(b^2+4)= 4X^2 + 2(b-2c)X+4c^2-4bc-4.
$$
Choosing $b,c \in (\sqrt{X}/100, \sqrt{X}/200)$ as above, 
the coefficients of the aforementioned
polynomial are $\leq X$. 
A well-known theorem of Besicovitch \cite{besicovitch},
about the $\mathbb{Q}$-linear
independence of square-roots, implies that all these $\gg X$
many numbers $\gamma(b,c)$ are pairwise distinct.
\end{proof}

\subsection*{Acknowledgements}
We thank Martin Widmer for bringing Green's comment 
\cite{green} to our attention,
which was one of the starting points for writing this paper. 
\\

\end{document}